\newtheorem{theorem}{Theorem}[section]
\newtheorem{proposition}[theorem]{Proposition}
\newtheorem{lemma}[theorem]{Lemma}
\newtheorem{corollary}[theorem]{Corollary}
\newtheorem{definition}[theorem]{Definition}
\newtheorem{remark}[theorem]{Remark}
\edef\qedrestoreat{\noexpand\catcode\lq\noexpand\@=\the\catcode\lq\@}
\let\protect\relax\fi
\def\qed{\protect\@qed{$\qedsymbol$}}
\def\pushright{\protect\@pushright}
\def\QED{\protect\@qed{{\rm Q.E.D.}}}
\def\QEI{\protect\@qed{{\rm Q.E.I.}}}
\def\Proof{\protect\@Proof}\def\endProof{\protect\@endProof}%
\def\Proofof#1{\protect\@Proofof{#1}}\def\endProofof{\protect\@endProofof}%
\def\qedsymbol{\raisebox{-.2ex}{$\Box$}}
\def\TheWordProof{\em Proof.}
\def\TheWordProofof#1{\em Proof of #1.}
\def\ProofFont{}
\newif\ifAutoQED\AutoQEDfalse
\newif\ifNumberResults
\def\parag@pushright#1{{
    \parfillskip=0pt            
    \widowpenalty=10000         
    \displaywidowpenalty=10000  
    \finalhyphendemerits=0      
    %
    \hbox@pushright             
    #1
    %
    \par}}
\def\hbox@pushright{
    \unskip                     
    \nobreak                    
    \hfil                       
    \penalty50                  
    \hskip.2em                  
    \null                       
    \hfill                      
}%
\newif\if@qed\@qedfalse
\def\save@set@qed{\let\saved@ifqed\if@qed\global\@qedtrue}%
\def\restore@qed{\global\let\if@qed\saved@ifqed}
\def\@Proof{%
   \par\removelastskip\bigskip\penalty100
   \save@set@qed
   \noindent\ProofFont{\TheWordProof\enskip}%
}%
\def\@Proofof#1{%
   \par\removelastskip\bigskip\penalty100
   \save@set@qed
   \noindent\ProofFont{\TheWordProofof{#1}\enskip}%
}%
\def\@endProof{%
   \qed\restore@qed
   \penalty-100 \medskip
}
\def\@endProofof{%
   \qed\restore@qed
   \penalty-100 \medskip
}
\def\@qed#1{%
\if@qed                                 
     \global\@qedfalse
        \ifmmode\ifinner\pushright{#1}
        \else\eqno{\qedsymbol}\fi
        \else\pushright{#1}\fi%
\else\ifhmode\ifinner\else\par\fi\fi
\fi}
\def\@pushright#1{%
  {\ifvmode                             
       \null\hfill{#1}\par              
  \else\ifmmode\maths@pushright{\hbox{#1}}
       \else\ifinner\hbox@pushright{#1}
            \else\parag@pushright{#1}
  \fi  \fi  \fi
}}%
\def\maths@pushright#1{{%
  \ifinner
     \hbox@pushright{#1}%
  \else
     \eqno#1
     \def\]{$$\ignorespaces}
  \fi
}}%
\newcommand{\comment}[1]{}
\newcommand{\conv}{\mathsf{conv}}
\newcommand{\cone}{\mathsf{cone}}
\newcommand{\vol}{\mathsf{vol}}
\newcommand{\dist}{\mathsf{dist}}
\newcommand{\Fe}{\mathcal{F}}
\newcommand{\In}{\mathcal{I}}
\newcommand{\bE}{\mathbb{E}}
\newcommand{\inte}{\mathsf{int}}
\newcommand{\EE}{\mathcal{E}}
\newcommand{\bI}{\mathbf{1}}
\def\a{\alpha}
\def\b{\beta}
\def\d{\delta}
\def\s{\sigma}
\def\r{\rho}
\def\ph{\varphi}
\def\e{\varepsilon}
\def\R{\mathbb R}
\def\N{\mathbb N}
\def\<{\langle}
\def\>{\rangle}
\def\Oh{\mathcal O}
\def\CC{{\mathscr C}}
\def\Prob{\mathop{\mathsf{Prob}}}
\def\Proj{\mathbb{P}}
\def\scone{\mathsf{sconv}}
\def\dP{d_{\Proj}}
\def\Tusn{T^{\perp}}
\def\kc{{\cal C}}
\newcommand{\lan}{\left\langle}
\newcommand{\ran}{\right\rangle}
\newcommand{\veps}{\e}
\newcommand{\scC}{\CC}
\newcommand{\msD}{{\mathscr D}}
\newcommand{\scK}{{\mathscr K}}
\newcommand{\diam}{\mathsf{diam}}
\newcommand{\dnew}{\omega}
\newcommand{\cnew}{C}
\def\aa{\overline{a}}
\newenvironment{proofsketch}{\trivlist\item[]\emph{Proof (Sketch)}.}%
{\unskip\nobreak\hskip 1em plus 1fil\nobreak$\Box$
\parfillskip=0pt%
\endtrivlist}
\def\msD{\mathscr{D}}
\def\diam{\mathrm{diam}}
\def\sp{{s}}
\begin{document}

\title{\bf Robust Smoothed Analysis of a Condition Number for Linear Programming}

\author{Peter B\"urgisser$^\ast$ and Dennis Amelunxen\thanks{Institute of Mathematics,
University of Paderborn, Germany. Partially supported
by DFG grant BU 1371/2-1 and
DFG Research Training Group on Scientific Computation GRK 693
(PaSCo GK).}\\
University of Paderborn\\
\{pbuerg,damelunx\}@math.upb.de
}

\maketitle


\begin{abstract}
We perform a smoothed analysis of the GCC-condition number~$\scC(A)$
of the linear programming feasibility problem
$\exists x\in\R^{m+1}\ Ax < 0$.
Suppose that $\bar{A}$ is any matrix with rows $\aa_i$ of euclidean norm~$1$
and, independently for all~$i$,
let $a_i$ be a random perturbation of $\aa_i$ following the uniform distribution
in the spherical disk in $S^m$ of angular radius $\arcsin\s$ and centered at $\aa_i$.
We prove that $\bE(\ln\scC(A)) =\Oh (mn/\s)$.
A similar result was shown for Renegar's condition number and Gaussian perturbations
by Dunagan, Spielman, and Teng [arXiv:cs.DS/0302011].
Our result is robust in the sense that it easily extends to
radially symmetric probability distributions supported on a spherical disk of radius
$\arcsin\s$,
whose density may even have a singularity at the center of the perturbation.
Our proofs combine ideas from a recent paper of
B\"urgisser, Cucker, and Lotz (Math.\ Comp.\ 77, No. 263, 2008)
with techniques of Dunagan et al.
\end{abstract}

\smallskip

\noindent{\bf AMS subject classifications:}
90C05, 90C31, 52A22, 60D05

\smallskip

\noindent{\bf Key words:} linear programming, perturbation, condition number,
smoothed analysis, spherically convex sets

\section{Introduction}

A distinctive feature of the computations considered in numerical
analysis is that they are affected by errors. A main character in
the understanding of the effects of these errors is the {\em
condition number} of the input at hand. This is a positive number
measuring the sensitivity of the output with respect to small
perturbations of the input. The best known condition number is that
for matrix inversion and linear equation solving, which takes the
form $\kappa(A)=\|A\|\,\|A^{-1}\|$ for a square matrix~$A$.
Condition numbers not only occur in round-off analysis, but also
appear as a parameter in complexity bounds for a variety of
iterative algorithms in linear algebra, linear and convex
optimization, and polynomial equation solving. Yet, condition
numbers are not easily computable. As a way out for this situation,
Smale suggested to assume a probability measure on the set of data
and to study the condition number of this data as a random variable.
Examples of such results abound for a variety of condition numbers.
For more details and references we refer to Smale's
survey~\cite{Smale97} and the recent survey~\cite{buer:09a}.

Renegar~\cite{rene:94,rene:95b,rene:95a} was the first to realize
that the computational complexity of linear programming problems can
be bounded by a polynomial in the number of variables and inequalities
and a certain condition measure of the input.
This condition measure is given by the inverse distance of the input
to the set of ill-posed systems.
More specifically, it is well known that for a
given matrix $A\in\R^{n\times (m+1)}$, either the system $Ax<0$ or
its dual system $A^Ty=0,\ y>0$ have a solution, unless we are in an
ill-posed situation. The (homogeneous) linear programming
feasibility problem is to decide this alternative for given $A$ and
to compute a solution of the corresponding system.
A primal-dual interior point method is used
in~\cite{cupe:02} to solve the linear programming feasibility
problem within
\begin{equation}\label{eq:cba-cupe}
 \Oh\big(\sqrt{m+n}\,(\ln(m+n) + \ln\CC(A))\big)
\end{equation}
iterations, with each step costing at most $\Oh((m+n)^3)$ arithmetic
operations. Here, the {\em GCC-condition number}~$\CC(A)$
is a variant of Renegar's condition
number introduced by Goffin~\cite{goff:80}, and later generalized by
Cucker and Cheung~\cite{ChC:01} (see \S\ref{se:GCC} for the definition).
The advantage of $\CC(A)$ over Renegar's condition number
is that this quantity can be neatly characterized in terms
of spherical geometry, which greatly facilitates its probabilistic
analysis.

Thus the running time of the primal-dual interior
point method used in~\cite{cupe:02} is controlled by~$\CC(A)$.
Understanding the average-case behaviour of this algorithm
therefore boils down to studying $\ln\CC(A)$ for
random inputs~$A$.
Motivated by this observation, a lot of efforts have been devoted to the
{\em average-case analysis} of the random variable $\CC(A)$,
i.e., to compute the expected value (or the distribution
function) of $\ln\CC(A)$ for random matrices~$A$.
In most cases, the matrices $A$ are assumed to have random entries which are
i.i.d.\ standard normal.
We remark that since the condition number $\CC(A)$ is
multi-homogeneous in the rows~$a_i$ of $A$, this is equivalent to
considering $\CC(A)$ in the case where $a_1,\ldots,a_n$
are i.i.d.\ uniformly distributed in unit sphere
$S^m := \{x\in\R^{m+1}\mid \|x\|=1\}$.

The papers dealing with the average analysis of $\CC(A)$ are easily
summarized. A bound for $\bE(\ln\CC(A))$ of the form $\Oh(\min\{n,
m\ln n\})$ was shown in~\cite{ChC01}. This bound was improved
in~\cite{CW01} to $\max\{\ln m, \ln\ln n \}+\Oh(1)$ assuming that
$n$ is moderately larger than $m$. Still, in~\cite{ChCH:05}, the
asymptotic behavior of both $\CC(A)$ and $\ln\CC(A)$ was
exhaustively studied and these results were extended in~\cite{HM:06}
to matrices $A\in (S^m)^n$ drawn from distributions more general
than the uniform. Finally, in~\cite{BCL:08a}, the exact distribution
of $\CC(A)$ conditioned to $A$ being feasible was found and
asymptotically sharp tail bounds for the infeasible case were given.
In particular, it was shown that $\bE(\ln\CC(A))=\Oh(\ln m)$.
Our method yields another proof of this result~(Cor.~\ref{cor:average}).

\subsection{Smoothed analysis}

The problem of average-case analysis is that its results
strongly depend on the distribution of the inputs,
which is unknown, and usually assumed to be Gaussian
for rendering the mathematical analysis feasible.
Spielman and Teng~\cite{ST:02,ST:03,ST:04} suggested in 2001 the concept
of {\em smoothed analysis of algorithms}, which is a new form of analysis of algorithms
that arguably blends the best of both worst-case and average-case.
They used this new framework to give a more compelling explanation of the speed
of the simplex method (for the shadow-vertex pivot rule).

The general idea of smoothed analysis is easy to explain.
Let $T\colon \R^p\rightarrow \R_{+}\cup\{\infty\}$ be any function
(measuring running time, log of condition numbers etc.).
Instead of showing
``it is unlikely that $T(a)$ will be large,''
one shows that
``for all $\aa\in\R^p$ and all slight random perturbations $a$ of $\aa$,
 it is unlikely that $T(a)$ will be large.''
If we assume that $a$ is multivariate normal with mean $\aa$ and
variance~$\s^2$, in short $a\in N(\aa,\s^2)$,
then the goal of a smoothed analysis of~$T$
is to give good estimates of
$$
 \sup_{\aa\in\R^p}\ \Prob_{a \in N(\aa,\s^2)}\{ T(a) > \e^{-1}\} .
$$
In a first approach one may focus on bounds on the expectations.

For many situations of interest, it turns out that in smoothed analysis,
there is only a weak dependence on the chosen model of random perturbations.
A first formal evidence of this robustness phenomenon was given by
Cucker, Hauser, and Lotz~\cite{CHL:09} for certain conic condition numbers,
stated below as Theorem~\ref{th:CHL}.

Dunagan et al.~\cite{DST} (see also Spielman and Teng~\cite{ST:03})
performed a {\em smoothed analysis} of the running time of interior
point methods of linear programming by analyzing Renegar's condition number~$\CC'$, 
a variant of the GCC condition number $\CC$. 
Among other things, they proved the following:
$$
 \sup_{\bar{A}}\ \bE_{A\in N(\bar{A},\s^2)}(\ln\CC'(A)) = \Oh\Big(\ln\frac{mn}{\s}\Big).
$$
Here the supremum is over all
$\bar{A}\in\R^{n\times (m+1)}$ of
Frobenius norm at most one.
Our main result (Theorem~\ref{th:Emain}) shows that a similar bound actually holds for
a large class of random perturbation laws.

\subsection{A geometric approach to conic condition numbers}

B\"urgisser et al.~\cite{BCL:06a,BCL:08} recently extended
a result of Demmel~\cite{Demmel88} on conic condition numbers
from average-case analysis to smoothed analysis.
There, the perturbations of the inputs are modelled by uniform instead of Gaussian distributions.
This allows to perform the analysis in a general geometric framework that we explain next.

The set of ill-posed inputs to a computational problem is modelled as a lower dimensional
subset~$\Sigma$ of the data space~$\msD$, which is assumed to be furnished with a metric~$d$,
a distance function $\dist$ (not necessarily a metric),
and a volume measure.
In our cases of interest, $\msD$ is a Riemannian manifold, $d$ is the corresponding
metric, and $\dist = \sin(d)$.
The corresponding condition number~$\CC(a)$ of an input $a\in \msD$ is then defined as
\begin{equation}\label{eq:Cdef}
 \CC(a)=\frac1{\dist(a,\Sigma)} .
\end{equation}
This is an appropriate definition for many applications. In this model,
the set of inputs $a$ with condition $\CC(a) > \e^{-1}$ is given by the
$\e$-neighborhood
$$
T_\dist(\Sigma,\e)=\{a\in\msD\mid \dist(a,\Sigma) < \e\}.
$$
Let $B(\aa,\a) := \{a\in \msD \mid d(a,\aa)\le \a\}$
denote the ball centered at~$\aa\in \msD$ of radius~$\a$.
The task of a {\em uniform smoothed analysis} of~$\scC$
consists of providing good upper bounds on
$$
  \sup_{\aa\in \msD} \Prob_{a\in B(\aa,\a)} \{\scC(a) > \e^{-1}\},
$$
where $a$ is assumed to be chosen uniformly at random in $B(\aa,\a)$.
The probability occurring here thus has an immediate geometric meaning:
\begin{equation}\label{eq:relvol-gen}
 \Prob_{a\in B(\aa,\a)} \{\scC(a) > \e^{-1}\}
   = \frac{\vol\,(T_\dist(\Sigma,\e) \cap B(\aa,\a))}{\vol\,(B(\aa,\a))} .
\end{equation}
Thus uniform smoothed analysis means to provide
bounds on the volume of the intersection of
$\e$-neighborhoods of $\Sigma$, relative to the distance function~$\dist$,
with balls of radius $\a$.
We note that for compact data spaces~$\msD$, uniform smoothed analysis
interpolates transparently between worst-case
and average-case analysis.
Indeed, when $\a=0$ we get worst-case analysis,
while for $\a=\diam(\msD)$ we obtain average-case analysis.

For conic condition numbers, this general concept specializes in the following way.
The data space is the unit
$m$-sphere $S^m\subset\R^{m+1}$, $\Sigma$ is a lower dimensional
subset of $S^m$ such that $\Sigma=-\Sigma$
(in many applications it is an algebraic hypersurface)
and the conic condition number~$\scC(a)$ of $a\in S^m$ is defined by~(\ref{eq:Cdef})
with respect to the distance function $\dist(a,b):=\sin d(a,b)$, where~$d$
refers to the angular (i.e., Riemannian) distance in  $S^m$.
(We could as well consider the data space as the real projective space $\mathbb{P}^m$
on which $\dist$ defines a metric.)

For defining the GCC-condition number, one takes as the data space the
$n$th power $(S^m)^n= S^m\times\cdots\times S^m$ of the sphere~$S^m$
with the metric defined as
$d(A,B):=\max_{1\le i\le n} d(a_i,b_i)$,
where $A=(a_1,\ldots,a_n)$, $B=(b_1,\ldots,b_n)$
and $d(a_i,b_i)$ again denotes the angular distance in  $S^m$.
The distance function is defined by
$\dist(A,B):=\sin d(A,B)$ and the
GCC-condition number $\scC(A)$ is given by
$\scC(A)=1/\dist(A,\Sigma_{n,m})$, where
$\Sigma_{n,m}\subset (S^m)^n$
denotes the set of ill-posed inputs,
which is a semialgebraic subset of codimension one
(cf.~\S\ref{se:GCC} for details).

\subsection{Adversarial distributions} 

An advantage of the uniform model is that, for conic condition numbers,
results on smoothed analysis easily extend to more general families of
probability distributions. To obtain such robustness results
we rely on a general boosting technique developed in Hauser and M\"uller~\cite{HM:06}
and apply it similarly as in  Cucker et al.~\cite{CHL:09}

The framework is the following (cf. \S\ref{subsec:advers-distr}).
Consider the spherical cap
$B(\aa,\a) := \{x\in S^m\mid d(x,a) \le \a\}$
in the sphere~$S^m$ centered at~$\aa\in S^m$
and having angular radius~$\a\in (0,\pi/2]$.
Let $\nu$ denote the uniform measure on $B(\aa,\a)$ and suppose that
$\mu_{\aa}$ is a $\nu$-absolutely continuous probability measure on $S^m$, say
$\mu_{\aa}(G) = \int_G f d\,\nu$
for measurable sets~$G$. We further assume that the density $f$ is
of the form $f(x) = g(\sin d(x,\aa))$
with a monotonically decreasing function
$g\colon [0,\s]\to [0,\infty]$ of the form
$$
 g(r) =   C r^{-\beta}\, h(r),
$$
where $\s:=\sin\a$, $\beta < m$,
$h\colon [0,\s]\to [0,\infty)$ is a continuous function satisfying $h(0)\ne 0$,
and $C=I_m(\a)/I_{m-\beta}(\a)$ is a normalizing constant, 
where $I_k(\a) := \int_0^\a (\sin t)^{k-1}\, dt$,
cf.~\S\ref{se:dist}.
Thus, the support of $\mu_{\aa}$ is contained in $B(\aa,\a)$
and the density of $\mu_{\aa}$ is radially symmetric with a pole of order~$\beta$
at $\aa$.
Such distributions were called {\em adversarial} in~\cite{CHL:09}.
The exponent~$\beta$ and
the quantity $H:=\sup_{0\le r \le \s} h(r)$ are the only parameters entering
the bound below.
In the special case $\b=0$,
the density of $\mu_{\aa}$ does not have a singularity
and the situation considerably simplifies:
We have $C=1$, $g=h$ and the estimate $\mu_{\bar{a}}(B)\leq H\cdot \nu(B)$
holds for any measurable set~$G$.

To extend this estimate
to the general case, the smoothness parameter~$s$
of a $\nu$-absolutely continuous distributions
was defined in~\cite{HM:06}, and in~\cite[Lemma~3.2]{CHL:09}, 
it was shown that $s=1-\beta/m$.
This means that $s$ is
the largest number $s' > 0$ for which the following is true:
For every $\veps>0$ there exists $\delta(\veps)>0$
such that $\nu(G)\leq\delta(\veps)$ implies $\mu_{\bar{a}}(G)\leq\nu(B)^{s'-\veps}$
for all measurable sets $G$.
This allows to obtain tail bounds for $\mu_{\aa}$ from tail bounds for $\nu$.

B\"urgisser et al.~\cite{BCL:08} provided a general smoothed analysis of conic
condition numbers for uniform perturbations.
This was recently extended by Cucker et al.~\cite{CHL:09}
to the model of adversarial perturbations, who
obtained the following robust smoothed analysis estimate.

\begin{theorem}\label{th:CHL}
Let $\scC$ be a conic condition number with set of ill-posed inputs
$\Sigma\subseteq S^m$ and assume that $\Sigma$ is contained in an
algebraic hypersurface of degree~$d$.
Then we have for a random perturbation from any
adversarial distribution~$\mu_{\aa}$ on $B(\aa,\a)$
with center $\aa\in S^m$, angular radius $\a\in (0,\pi/2]$
and parameters $\b,H,\s =\sin\a$ that
$$
 \bE_{\mu_{\aa}}( \ln\scC)\ \le\
  \ln\frac{m^2d}{\s} + \frac1{1-\beta/m} \ln\frac{2eH^2 m}{\ln(\pi m/2)}
    + \ln\frac{13\pi}{2} .
$$
\end{theorem}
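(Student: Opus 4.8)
The plan is to reduce the expectation to a tail bound and then estimate the tail in two stages: first for the uniform measure $\nu$ on $B(\aa,\a)$, using the integral-geometric machinery of B\"urgisser et al.~\cite{BCL:08}, and then for the adversarial measure $\mu_{\aa}$, exploiting the smoothness parameter $s=1-\b/m$ already identified in the excerpt. Since $\scC(a)=1/\dist(a,\Sigma)\ge 1$, writing $\e=e^{-t}$ we have $\ln\scC(a)\ge t\iff\dist(a,\Sigma)\le\e\iff a\in T_\dist(\Sigma,\e)$, so the layer-cake formula gives
$$
 \bE_{\mu_{\aa}}(\ln\scC)=\int_0^\infty\mu_{\aa}\big(T_\dist(\Sigma,e^{-t})\big)\,dt .
$$
Everything thus comes down to an upper bound on $\mu_{\aa}\big(T_\dist(\Sigma,\e)\big)$ and, in particular, on how this quantity decays as $\e\to 0$.

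First I would treat the uniform case. Because $\Sigma$ lies in a real algebraic hypersurface of degree $d$, a spherical Crofton/kinematic argument as in \cite{BCL:08} — intersecting the $\e$-tube with great subspheres and bounding the number of intersection components by B\'ezout — yields a relative volume estimate of the form
$$
 \nu\big(T_\dist(\Sigma,\e)\big)=\frac{\vol\big(T_\dist(\Sigma,\e)\cap B(\aa,\a)\big)}{\vol\big(B(\aa,\a)\big)}\le c\,\frac{m^2 d}{\s}\,\e ,
$$
valid for $\e$ up to a threshold of order $\s/(m^2d)$ (and trivially $\le 1$ beyond it); the factor $1/\s$ records the loss incurred by conditioning the uniform measure on a cap of radius $\a$ with $\s=\sin\a$, one power of $m$ comes from the codimension-one sweep, and the other from comparing the volume of the cap with that of its bounding sphere. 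Substituting this into the layer-cake integral and splitting at the threshold already gives $\bE_\nu(\ln\scC)\le\ln\frac{c\,m^2d}{\s}+O(1)$, which is the shape of the claimed bound in the case $\b=0$.

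Next I would boost to $\mu_{\aa}$ via the Hauser--M\"uller technique \cite{HM:06}. Writing the density as $f(x)=g(\sin d(x,\aa))=C\,r^{-\b}h(r)$ with $r=\sin d(x,\aa)$ and $h\le H$, I bound $\mu_{\aa}(G)=\int_G C\,r^{-\b}h(r)\,d\nu\le CH\int_G r^{-\b}\,d\nu$ and then apply H\"older with exponents $(p,q)$, $\tfrac1p+\tfrac1q=1$, $\b p<m$, to get $\int_G r^{-\b}\,d\nu\le\big(\int_{B(\aa,\a)}r^{-\b p}\,d\nu\big)^{1/p}\nu(G)^{1/q}$; since $d\nu$ behaves like $r^{m-1}\,dr$ near $\aa$, the first factor is a finite constant expressible through the $I_k(\a)$ of the excerpt, and as $p\uparrow m/\b$ one has $1/q\uparrow 1-\b/m=s$. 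Optimizing $p$ against the blow-up of that constant gives
$$
 \mu_{\aa}\big(T_\dist(\Sigma,\e)\big)\le D\cdot\big(\nu(T_\dist(\Sigma,\e))\big)^{s}\le D\Big(\frac{c\,m^2d}{\s}\Big)^{s}\e^{s},
$$
where $D$ absorbs $H$, $C$ and the optimized H\"older constant — this optimization is exactly what produces the factor $\tfrac{2eH^2m}{\ln(\pi m/2)}$ in the statement. Feeding this back into $\bE_{\mu_{\aa}}(\ln\scC)=\int_0^\infty\mu_{\aa}(T_\dist(\Sigma,e^{-t}))\,dt$ and splitting at the point where the bound $\min\{1,D(c\,m^2d/\s)^{s}e^{-st}\}$ crosses $1$ yields
$$
 \bE_{\mu_{\aa}}(\ln\scC)\le\ln\frac{c\,m^2d}{\s}+\frac1s\ln(eD),
$$
and tracking the constants ($c=13\pi/2$, $eD=\tfrac{2eH^2m}{\ln(\pi m/2)}$) reproduces the theorem.

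The main obstacle I expect is the uniform tube estimate of the second paragraph: obtaining the \emph{correct} dependence on $\s$ — rather than a crude $\s^{-m}$ from na\"ively comparing cap volumes — requires the delicate integral-geometric argument of \cite{BCL:08}, controlling how the $\e$-tube around a degree-$d$ hypersurface can meet a small cap (including its behaviour near the cap boundary, which is the source of the logarithmic refinements that ultimately appear in the $\ln(\pi m/2)$ denominator). A secondary technical point is the constant bookkeeping in the H\"older step, where the exponent $p$ must be chosen to balance the loss $1/q<s$ against the divergence of $\int r^{-\b p}\,d\nu$ as $\b p\to m$, so as not to lose an extra factor growing with $m$.
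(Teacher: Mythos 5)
Note first that the paper does not actually prove Theorem~\ref{th:CHL}: it is quoted verbatim from Cucker, Hauser, and Lotz~\cite{CHL:09} as background. What the paper does contain is the closely analogous argument for the GCC case (Lemma~\ref{lem:CHL:09} feeding into the proofs of Theorems~\ref{th:main} and~\ref{th:Emain}), and your overall strategy --- layer-cake reduction, a Crofton/kinematic tube estimate for a degree-$d$ hypersurface intersected with a cap as in~\cite{BCL:08}, then a H\"older-type boost from $\nu$ to $\mu_{\aa}$ --- is indeed the route taken there and in~\cite{CHL:09}, so the shape of your argument is the right one.

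The boosting step, however, has a genuine gap as written. You claim that optimizing the H\"older exponent $p$ yields $\mu_{\aa}(G)\le D\,\nu(G)^{s}$ with a \emph{fixed} finite constant $D$ and the \emph{exact} critical exponent $s=1-\b/m$. This cannot hold: for any admissible $p<m/\b$, H\"older gives only $\mu_{\aa}(G)\le A_p\,\nu(G)^{1/q}$ with $1/q=1-1/p<s$, and $A_p\asymp\big(\int r^{-\b p}\,d\nu\big)^{1/p}$ diverges as $p\uparrow m/\b$; no choice of $p$ attains the exponent $s$ with a bounded prefactor. This is precisely why~\cite{HM:06} defines the smoothness parameter as a \emph{limit} (for every $\e>0$ there is a tolerance $\d(\e)$ with $\nu(G)\le\d(\e)\Rightarrow\mu_{\aa}(G)\le\nu(G)^{s-\e}$), and why this paper's Lemma~\ref{lem:CHL:09} commits to the strictly sub-critical exponent $c=\tfrac12(1-\b/m)$ with an explicit tolerance $\d_c$ --- paying for it with the weaker coefficient $(1-\b/m)^{-2}$ in Theorem~\ref{th:Emain}. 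To actually produce the coefficient $\tfrac{1}{1-\b/m}$ stated in Theorem~\ref{th:CHL}, the choice of $p$ has to be optimized \emph{jointly with the tail integral} $\int_0^\infty\mu_{\aa}\big(T_{\dist}(\Sigma,e^{-t})\big)\,dt$ (equivalently, one tracks how close $1/q$ may be taken to $s$ as a function of the tail level $t$, or equivalently minimizes $\ln B+q\ln(eA_p)$ over $p$ after integration), trading the blow-up of $A_p$ against the gain in the exponent over the whole range of $t$ rather than at a single level. Your sketch jumps over exactly this balance, and the intermediate bound it rests on is false as stated; this is the step you flagged as ``constant bookkeeping,'' but it is an exponent issue, not merely a constant one.
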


It is remarkable that the only problem dependent parameters entering the above bound
are the dimension~$m$ and the degree~$d$.
The only distribution dependent parameters entering the bound are
$\sigma$, $\beta$, and $H$.
This result has a wide range of applications to linear and polynomial equation solving.

\subsection{Main results}

The goal of this paper is to prove the following analogue of Theorem~\ref{th:CHL}
for the GCC-condition number of the linear programming feasibility problem.

\begin{theorem}\label{th:Emain} 
Let $\scC$ denote the GCC condition number defined on $(S^m)^n$ and $n>m+1$.
Suppose that $a_i$ is randomly chosen from an adversarial distribution~$\mu_{\aa_i}$
on $B(\aa_i,\a)$ with center $\aa_i\in S^m$, angular radius $\a\in (0,\pi/2]$,
and parameters $\b,H,\s=\sin\a$, independently for $i=1,\ldots,n$.
Then the random matrix $A$ with rows $a_i$ satisfies
\begin{eqnarray*}
 \bE \big(\ln \CC(A)\big) & = &
 \Oh \Big( {\textstyle\big(1-\frac{\b}{m}\big)^{-2}}\cdot \ln \frac{nH}{\s}\Big).
\end{eqnarray*}
In the case $\beta=0$, we have more precisely,
$$
 \bE \big(\ln \CC(A)\big) \le \textstyle 12 \ln n + 17 \ln m
       + 6 \ln \frac{1}{\s} + 8 \ln H + 29 =  \Oh\big(\ln\frac{nH}{\s}\big) .
$$
\end{theorem}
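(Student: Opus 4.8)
The plan is to reduce the adversarial model to the uniform one, prove a polynomial tail bound for $\CC(A)$ in the uniform model by a row-by-row analysis combining the convexity characterizations of~\cite{BCL:08a} with the localization technique of~\cite{DST}, and then integrate.

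\emph{Reduction to uniform perturbations.} Each adversarial density relative to the uniform measure $\nu$ on $B(\aa_i,\a)$ has smoothness parameter $1-\b/m$ (cf.\ \cite{HM:06,CHL:09}). Hence, once the bad event $\{\CC(A)>t\}$ has been localized to a bounded number of coordinates (which is the output of the analysis below), a tail bound $\Prob_{\nu^n}\{\CC(A)>t\}\le\varphi(t)$ for the uniform product measure transfers into a tail bound for the adversarial product measure bounded, up to constants, by $H^{O(1)}\varphi(t)^{\,1-\b/m-\e}$. Feeding this into $\bE(\ln\CC)=\int_0^\infty\Prob\{\ln\CC>u\}\,du$ and using a uniform estimate $\varphi(t)=P(m,n)/(\s t)$ then yields $\bE(\ln\CC(A))=O\big((1-\b/m)^{-2}\ln(nH/\s)\big)$; the two inverse powers of $1-\b/m$ come, respectively, from the boosting of the per-coordinate probability and from the integration of the resulting tail. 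It therefore suffices to prove, for the \emph{uniform} product measure $\nu^n$ on $\prod_iB(\aa_i,\a)$, a tail bound $\Prob_{\nu^n}\{\CC(A)>t\}\le P(m,n)/(\s t)$ with $P$ a polynomial of constant degree, and to track the constants in order to get the explicit estimate when $\b=0$.

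\emph{The geometric setup.} Write $\CC(A)=1/\dist(A,\Sigma_{n,m})$. The quantity $\dist(A,\Sigma_{n,m})$ has the spherical-convexity description $\big|\min_{\|x\|=1}\max_i\langle a_i,x\rangle\big|$, which splits according to feasibility of $Ax<0$: if the system is feasible it is the sine of the inradius of the spherically convex region $\{x\in S^m:\langle a_i,x\rangle\le0\ \forall i\}$, and if it is infeasible it equals the distance from the origin to the boundary of $\conv\{a_1,\dots,a_n\}$. I would handle the two cases separately, adapting the feasible-case analysis of~\cite{BCL:08a} and the infeasible-case arguments of~\cite{DST}.

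\emph{Row-by-row localization (the crux).} The heart of the proof --- and the step I expect to be the main obstacle --- is to show that in each case the event $\{\dist(A,\Sigma_{n,m})<\e\}$ forces a \emph{single, canonically chosen} row $a_k$ to lie within angular distance $O(\e\cdot\mathrm{poly}(m,n))$ of a great subsphere of $S^m$ of codimension at least one that is determined by the \emph{other} rows alone. In the infeasible case, let $p$ be the point of $\conv\{a_1,\dots,a_n\}$ nearest the origin; by Carath\'eodory $p$ is the orthogonal projection of the origin onto the affine hull of a subset $S$ of at most $m+1$ rows, and $\|p\|=\dist(A,\Sigma_{n,m})$, so smallness of $\|p\|$ forces, up to polynomial factors in $m$ and $n$, a suitably chosen row $a_k$ with $k\in S$ to be close to the affine hull of $S\setminus\{k\}$; the feasible case is analogous, via a near-active constraint at the incenter. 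The delicate point, exactly as in~\cite{DST}, is to select $k$ and the accompanying subspace so that, once all rows other than $a_k$ are frozen, the forbidden subsphere is \emph{fixed}: a naive union bound over the $\binom{n}{m+1}$ possible subsets $S$ would be far too lossy, so one must exhibit a canonical ``responsible'' row (for instance via an extreme-point or leave-one-out argument). Granting this, freeze all rows but $a_k$: the remaining forbidden set for $a_k$ is a tube of radius $r=O(\e\cdot\mathrm{poly}(m,n))$ about a fixed subsphere, and a routine spherical-cap volume computation gives $\Prob_{a_k\sim\nu}\{a_k\in\text{tube}\}\le c\,m\,r/\s$ \emph{uniformly in the center} $\aa_k$ (the worst case being $\aa_k$ on the subsphere). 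A union bound over the $n$ choices of responsible row gives $\Prob_{\nu^n}\{\dist(A,\Sigma_{n,m})<\e\}\le c\,mn\cdot\mathrm{poly}(m,n)\cdot\e/\s$, which is the required bound with $\e=1/t$. Running the reduction above and carefully bookkeeping the spherical-trigonometry losses then produces both the general bound $O\big((1-\b/m)^{-2}\ln(nH/\s)\big)$ and the explicit $12\ln n+17\ln m+6\ln(1/\s)+8\ln H+29$ for $\b=0$.
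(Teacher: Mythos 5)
Your high-level plan --- reduce adversarial perturbations to uniform via the smoothness parameter, split into feasible/infeasible, localize the bad event to a single ``leave-one-out'' row, bound a relative volume, integrate tails --- does match the paper's strategy. But two genuine gaps remain, and they hide the paper's real technical content.

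First, the forbidden region is not a tube about a great subsphere. In the feasible case, once you freeze $(a_1,\ldots,a_{k-1},a_{k+1},\ldots,a_n)$, Proposition~\ref{pro:AF} (proved via Helly) forces $a_k$ into the outer $\ph$-neighborhood of $\partial K_k$, where $K_k=-\scone\{a_1,\ldots,a_{k-1},a_{k+1},\ldots,a_n\}$ is a properly convex spherical polytope. Its boundary has up to $\binom{n-1}{m}$ facets, so you cannot replace it by a single subsphere nor afford a union bound over facets --- exactly the combinatorial explosion you flag. The paper's resolution is Theorem~\ref{th:volbdconv}/Corollary~\ref{cor:volbdconv}: a bound on $\vol\bigl(T_o(\partial K,\ph)\cap B(\pm a,\a)\bigr)/\vol B(\pm a,\a)$ uniform over \emph{all} properly convex $K$. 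Its proof is not a ``routine spherical-cap computation'': it uses Weyl's tube formula, the kinematic formula, a Gauss-curvature bound for convex boundaries, and a smoothing argument. This is precisely the substitute for Ball's reverse isoperimetric inequality that Dunagan--Spielman--Teng rely on; calling it routine is where the proposal breaks.

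Second, your infeasible-case localization via Carath\'eodory does not close. The subset $S$ (the face of $\conv\{a_1,\ldots,a_n\}$ nearest the origin) depends on $a_k$ itself, so freezing the other rows does not fix the ``forbidden subsphere'' --- you acknowledge this (``Granting this\ldots'') but never supply the canonical selection. The paper sidesteps this entirely: it looks at the smallest $k>m$ with $A_k$ feasible and $A_{k+1}$ infeasible, invokes the transition estimate of Proposition~\ref{pro:IF}, namely $\CC(A_{k+1})\,\sin d(a_{k+1},\partial K_{A_k})\le 10\,\CC(A_k)$, and then combines the feasible-case tail for $\CC(A_k)$ with an inner-neighborhood volume bound for $a_{k+1}$ via a product-of-random-variables lemma (Lemma~\ref{le:multrva}). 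This reduction to the same convex-boundary volume bound, rather than an extreme-point/Carath\'eodory selection, is what makes the argument go through and is absent from the proposal. With these two ingredients supplied, the rest of your outline (union bound over $k$, adversarial boosting, integrating tails, constant tracking for $\b=0$) is correct and matches Section~3 of the paper.
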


As an application
let $T(A)$ denote the number of iterations of the primal-dual interior point method
of~\cite{cupe:02} for solving the linear programming feasibility problem
$\exists x\in\R^{m+1}\ Ax < 0$ (or its dual problem).
Theorem~\ref{th:Emain} implies that
$$
 \bE (T(A)) = \Oh\big({\textstyle\big(1-\frac{\b}{m}\big)^{-2}}\cdot\sqrt{n}\cdot\ln\frac{n H}{\s}\big) ,
$$
for a random matrix $A$ with independent rows $a_i$
from a adversarial distribution $\mu_{\aa_i}$
with parameters $\b,H$.

For the uniform distribution on $S^m$, by essentially the same method,
we can improve the estimates of Theorem~\ref{th:Emain}
obtaining a result that was previously shown in \cite{BCL:08}
by a very different technique.

\begin{corollary}\label{cor:average}
Suppose that the rows of the matrix $A$  are independently chosen in $S^m$
according to the uniform distribution and $n>m+1$.
Then
$$
 \bE \big(\ln \CC(A)\big) = \Oh\big(\ln m \big).
$$
\end{corollary}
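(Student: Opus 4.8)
The plan is to derive this as a limiting/degenerate case of the machinery built for Theorem~\ref{th:Emain}. The uniform distribution on $S^m$ is exactly the ``adversarial'' family with $\a=\pi/2$, hence $\s=\sin\a=1$, with $\b=0$ and $H=1$ (the density is the constant $1$, so $g=h\equiv 1$). Plugging these into the $\b=0$ bound of Theorem~\ref{th:Emain} already gives $\bE(\ln\CC(A))\le 12\ln n+17\ln m+29$, which is $\Oh(\ln n)$ but not yet $\Oh(\ln m)$. So the point of the corollary is that, in this uniform/average-case setting, one can eliminate the dependence on $n$ entirely. First I would isolate where the factor $\ln n$ entered the proof of Theorem~\ref{th:Emain}: it comes from a union bound over the $n$ rows when bounding the probability that some row $a_i$ lands in a bad neighborhood (i.e. that $\dist(A,\Sigma_{n,m})$ is small because one particular hyperplane is nearly degenerate), together with the crude estimate on the relative volume of $\veps$-neighborhoods inside a small cap of radius $\a$. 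When $\a=\pi/2$ the cap is the whole hemisphere (equivalently all of $\Proj^m$), and the relevant volume estimate becomes global rather than local.

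The key steps, in order, would be: (1) Recall the spherical-geometry characterization of $\CC(A)$ from \S\ref{se:GCC}, which expresses $\dist(A,\Sigma_{n,m})$ in terms of how well the rows $a_i$ can be separated from / surrounded by a fixed direction; this reduces the tail bound $\Prob\{\CC(A)>\veps^{-1}\}$ to a statement about $n$ i.i.d.\ uniform points on $S^m$. (2) Use the fact, available from \cite{BCL:08a} / the analysis feeding Theorem~\ref{th:CHL}, that for a \emph{single} uniform point the relative volume $\vol(T_\dist(\Sigma,\veps)\cap S^m)/\vol(S^m)$ of the $\veps$-neighborhood of the (codimension-one) ill-posed set is $\Oh(\veps\cdot\mathrm{poly}(m))$ — the crucial gain being that in the global setting the geometric series that controls the contribution of the $n$ half-space constraints can be summed to a bound \emph{independent of $n$}, because it is dominated by the feasible-case probability, which by the exact distribution result of \cite{BCL:08a} depends only on $m$. (3) Integrate the tail bound: $\bE(\ln\CC(A))=\int_0^\infty\Prob\{\ln\CC(A)>t\}\,dt=\int_0^1 \Prob\{\CC(A)>\veps^{-1}\}\,\frac{d\veps}{\veps}$, and check that with the $n$-free tail estimate from step (2) the integral converges to $\Oh(\ln m)$.

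The main obstacle I expect is step (2): showing that the $n$-dependence genuinely collapses rather than merely being replaced by, say, $\ln m + \Oh(1/n)$-type error terms that still hide an $n$. Concretely, the union bound over rows must be replaced by a direct computation of the probability that $n$ independent uniform points are in ``general enough position,'' and one must argue that the dominant contribution to the bad event is the one where the whole configuration is almost infeasible (a global, $m$-dependent event) rather than the $n$ separate events where one individual hyperplane is nearly degenerate. This is precisely the phenomenon that \cite{BCL:08} established by their ``very different technique'' (computing the exact conditional distribution of $\CC(A)$); here one has to recover it from the volume/boosting framework. Once that $n$-independent tail bound is in hand, the remaining calculus is routine and mirrors the end of the proof of Theorem~\ref{th:Emain}.
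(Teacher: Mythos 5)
Your diagnosis of the problem is right, and the intuition you point to --- that the feasibility probability of a $k$-row uniform instance depends only on $m$ and $k$, and decays fast enough to absorb the number of rows --- is exactly the correct lever. But the proposal stops short of a proof precisely at the point you yourself flag as ``the main obstacle,'' and that step is not a routine gap: it is the entire content of the corollary. Concretely, you propose to ``replace the union bound by a direct computation'' of a global almost-infeasibility event, but the paper does \emph{not} abandon the prefix decomposition from \S\ref{se:infeasible}; it refines it. The source of the $n$-dependence in Theorem~\ref{th:Emain} is not only the per-row union bound in the feasible case but, more importantly, the sum $\sum_{k=m+1}^{n-1}\Prob\EE_k$ in \eqref{eq:EEstep}, where each $\Prob\EE_k$ already carries a factor of $k$ --- so the raw sum is on the order of $n^2$. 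Also, the claim that setting $\a=\pi/2$ makes the volume estimate ``global rather than local'' is a red herring: the bound of Corollary~\ref{cor:volbdconv} with $\s=1$ is structurally the same and by itself does nothing to remove $n$.

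What actually kills the $n$-dependence in the paper's Proposition~\ref{pro:av} is a small but essential bookkeeping change followed by a probabilistic input you did not invoke. In the feasible-case argument one does \emph{not} bound $\Prob\{A'\text{ feasible}\}$ by $1$ as in \eqref{eq:anschluss}; one retains the explicit factor $p(k,m)=\Prob\{A_k\in\Fe_{k,m}\}$, so that the feasible-case tail bound becomes
$\Prob\{A_k\in\Fe_{k,m}\mbox{ and }\CC(A_k)\ge x\}\le k\cdot\frac{13m(m+1)}{2}\,p(k,m)\,x^{-1}$.
Propagating this through Lemma~\ref{le:multrva} yields $\Prob\EE_k\lesssim m^3\,k\,p(k,m)\,t^{-1}\ln t$, and then the crucial step is that Wendel's formula~\eqref{eq:wendel} gives $p(k,m)$ in closed form with $p(k,m)\sim 2^{-k}\binom{k-1}{m}(m+1)$, which is small enough that $\sum_{k=m+1}^{\infty} k\,p(k,m)=\Oh(m^2)$ (Lemma~\ref{le:pkm}). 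This replaces the $\Oh(n^2)$ weight in \eqref{eq:EEstep} by $\Oh(m^2)$ and gives the $n$-free tail bound $\Prob\{\CC(A)\ge t\}=\Oh(m^5\,t^{-1}\ln t)$, from which $\bE(\ln\CC(A))=\Oh(\ln m)$ follows by standard integration. Your proposal gestures at the right phenomenon via \cite{BCL:08a}, but it neither keeps the factor $p(k,m)$ in the prefix decomposition nor supplies the summability estimate $\sum_k k\,p(k,m)=\Oh(m^2)$; without Wendel's formula (or an equivalent sharp feasibility-probability estimate) the argument does not close.
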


The paper is organized as follows.
Section~\ref{se:prelim} is devoted to preliminaries on
spherically convex sets, the GCC-condition number,
and adversarial probability distributions.
In Section~\ref{se:USA} we state and prove probability tail bounds
for the GCC-condition number under adversarial random perturbations
and prove our main results.
The proof essentially reduces the adversarial to the uniform case
and then uses geometric arguments for the uniform case.
A principal ingredient of the proof of the uniform case
is an upper bound on the volume of
the neighborhood of spherically convex sets
(Theorem~\ref{th:volbdconv}), that is stated in
Section~\ref{se:volbound}, but whose proof is deferred
to Section~\ref{se:volnbS}.
The proof of the latter result proceeds along the lines of~\cite{BCL:08}
and uses some deeper results from integral and differential geometry
(Weyl's tube formula, kinematic formula).

\bigskip

\noindent
{\bf Acknowledgments.}
We thank Felipe Cucker and Martin Lotz for numerous helpful discussions.
We are grateful to an anonymous referee for constructive criticism
that led to more general results and a significantly better presentation of the paper.

\section{Preliminaries}\label{se:prelim}

\subsection{Convex sets in spheres}\label{se:conv}

A general reference about convex sets is \cite{webs:94}.
Glasauer's thesis~\cite{glas:95} is
a useful reference for the integral geometry of
spherically convex sets.

A {\em convex cone} in $\R^{m+1}$ is a subset
that is closed under addition and multiplication with nonnegative scalars.
We denote by $\cone(M)$ the convex cone generated by a subset
$M\subseteq \R^{m+1}$. More specifically,
the convex cone generated by points $a_1,\ldots,a_k\in \R^{m+1}$ is given by
$$
\mbox{$\cone\{a_1,\ldots,a_k\}:=\{x\in\R^{m+1}\mid
    \exists\lambda_1\ge 0,\ldots,\lambda_k \ge 0\quad x=\sum_{i=1}^k\lambda_i a_i \}$.}
$$
A convex cone~$C$ is called {\em pointed} iff $C\cap(-C)=\{0\}$. It
is known that $C$ is pointed iff $C\setminus\{0\}$
is contained in an open halfspace whose bounding hyperplane goes
through the origin. Clearly,
if $a_i\neq0$ for $i=1,\ldots,k$, then
$\cone\{a_1,\ldots,a_k\}$ is pointed
iff $0$ is not contained in the convex hull
$\conv\{a_1,\ldots,a_k\}$.

We use convex cones to define the notion of convexity for subsets of the sphere
$S^m=\{x\in \R^{m+1}\mid \|x\| =1\}$.
Let $x,y\in S^m$ be such that $x\ne\pm y$. We call
$[x,y] := \cone\{x,y\}\cap S^m$ the {\em great circle segment} connecting $x$ and~$y$.

\begin{definition}\label{def:convex}
A subset $K$ of $S^m$ is called {\em(spherically) convex} iff
we have $[x,y]\subseteq K$ for all $x,y\in K$ with $x\ne\pm y$.
We call $K$ {\em properly convex} iff it is nonempty, convex,
and does not contain a pair of antipodal points.
\end{definition}

We denote by $\scone(M):=\cone(M)\cap S^m$
the {\em convex hull} of a subset $M$ of $S^m$,
which is the smallest convex set containing $M$.
Clearly, $M$ is convex iff $M=\scone(M)$.
The closure of a convex set is convex as well.
It is easy to see that a convex subset $K$ of $S^m$ is contained in
a closed halfspace, unless $K=S^m$.
A properly convex set $K$ is always contained in an open halfspace.

\begin{definition}\label{def:dual}
The {\em dual set} of a subset $M\subseteq S^m$ is defined as
$$
 \breve{M} := \{ a\in S^m\mid \forall x\in M\ \<a,x\>\le 0 \}.
$$
\end{definition}

Clearly, $\breve{M}$ is a closed convex set disjoint to $M$.
The hyperplane separation theorem implies that the dual of $\breve{M}$
equals the closure of $\scone(M)$.
We note that
$M\subseteq N$ implies $\breve{M}\supseteq\breve{N}$.
Finally, it is important that $\breve{M}$ has nonempty interior iff $M$
does not contain a pair of antipodal points, that is,
``nonempty interior'' and ``properly convex'' are dual properties.

By a {\em convex body} $K$ in $S^m$ we will understand a closed
convex set $K$ such that both $K$ and $\breve{K}$ have nonempty interior,
i.e., both are properly convex.
The map $K\mapsto\breve{K}$ is an involution of the set of convex bodies in $S^m$.

\subsection{Distances, neighborhoods, and volumes}\label{se:dist}

We denote by $d(a,b)\in [0,\pi]$ the angular distance between
two points $a,b$ on the sphere~$S^m$.
Clearly, this defines a metric on $S^m$.
The (closed) {\em ball} of radius $\a\in[0,\pi]$ around $a\in S^m$ is defined as
$$
B(a,\a):=\{x\in S^m\mid d(x,a) \le \a\} = \{x\in S^m \:|\: \lan a,x\ran \ge \cos\a \}.
$$
This is the same as the {\em spherical cap} with center~$a$ and angular radius $\alpha$.
$B(a,\a)$ is convex iff $\a\le \pi/2$ or $\a=\pi$.
In the case $\a\le \pi/2$, the dual set of $B(a,\a)$ equals $B(-a,\pi/2-\a)$.

For a nonempty subset $M$ of $S^m$ we define the distance
of $a\in S^m$ to~$M$ as
$d(a,M):=\inf\{d(a,x)\mid x\in M\}$.
The dual set of $M$ can be characterized
in terms of distances by:
$a\in\breve{M} \Longleftrightarrow d(a,M)\ge \pi/2$.

\begin{lemma}\label{le:dkk}
Let $K$ be a convex body in $S^m$ and
$a\in S^m\setminus (K\cup\breve{K})$.
Then $d(a,K) + d(a,\breve{K}) = \pi/2$.
\end{lemma}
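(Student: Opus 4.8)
The plan is to translate the statement into the polarity of convex cones in $\R^{m+1}$. Set $C:=\cone(K)$, which is a closed convex pointed cone with nonempty interior because $K$ is a convex body, and let $C^{\circ}:=\{u\in\R^{m+1}\mid\langle u,x\rangle\le 0\ \forall x\in C\}$ denote its polar cone, so that $C\cap S^m=K$, $C^{\circ}\cap S^m=\breve K$, and $C^{\circ\circ}=C$. Before anything else I would record, using the characterization $a\in\breve M\Leftrightarrow d(a,M)\ge\pi/2$ (applied to $M=K$ and, via $\breve{\breve K}=K$, to $M=\breve K$), that $d(a,K)<\pi/2$ and $d(a,\breve K)<\pi/2$; these strictness facts are needed only in the last line.

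The heart of the argument is a Euclidean decomposition of $a$. Let $b:=P_C(a)$ be the metric projection of $a$ onto the closed convex cone $C$; testing the defining inequality $\langle a-b,x-b\rangle\le 0\ (x\in C)$ at $x=0$ and at $x=2b$ yields $\langle a-b,b\rangle=0$, and then $c:=a-b$ satisfies $\langle a-b,x\rangle\le 0$ for all $x\in C$, i.e.\ $c\in C^{\circ}$. Thus $a=b+c$ with $b\in C$, $c\in C^{\circ}$, $\langle b,c\rangle=0$, whence $\|b\|^2+\|c\|^2=\|a\|^2=1$. Here $b\neq 0$ (otherwise $a=c\in C^{\circ}\cap S^m=\breve K$) and $c\neq 0$ (otherwise $a=b\in C\cap S^m=K$), so the unit vectors $\hat b:=b/\|b\|\in K$ and $\hat c:=c/\|c\|\in\breve K$ are well defined.

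Next I would identify $\hat b$ and $\hat c$ as the spherical nearest points. Since $d(a,x)=\arccos\langle a,x\rangle$ is strictly decreasing in $\langle a,x\rangle$, computing $d(a,K)$ amounts to maximizing $\langle a,x\rangle$ over the unit vectors $x\in C$; for such $x$ one has $\langle a,x\rangle=\langle b,x\rangle+\langle c,x\rangle\le\langle b,x\rangle\le\|b\|$, using $\langle c,x\rangle\le 0$ (as $c\in C^{\circ}$, $x\in C$) and Cauchy--Schwarz, with equality at $x=\hat b$, where $\langle a,\hat b\rangle=\langle b,b\rangle/\|b\|=\|b\|$. Hence $\cos d(a,K)=\|b\|$, and symmetrically, maximizing $\langle a,x\rangle$ over unit $x\in C^{\circ}$ and using $\langle b,x\rangle\le 0$, $\cos d(a,\breve K)=\|c\|$. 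Combining, $\cos^2 d(a,K)+\cos^2 d(a,\breve K)=\|b\|^2+\|c\|^2=1$, so $\cos d(a,K)=\sin d(a,\breve K)=\cos\bigl(\tfrac\pi2-d(a,\breve K)\bigr)$; as $d(a,K)$ and $\tfrac\pi2-d(a,\breve K)$ both lie in $[0,\pi/2]$ by the first paragraph, injectivity of $\cos$ on $[0,\pi]$ gives $d(a,K)+d(a,\breve K)=\pi/2$.

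The one step that needs genuine care — and where I would expect to spend most of the effort in a clean write-up — is the identification of the spherical nearest-point maps onto $K$ and $\breve K$ with the normalized Euclidean metric projections onto $C$ and $C^{\circ}$; the cone-polarity bookkeeping around it is routine. One could instead quote Moreau's decomposition theorem ($a=P_C(a)+P_{C^{\circ}}(a)$ with orthogonal summands) to obtain $b$ and $c$ at once, but the projection-inequality computation keeps the proof self-contained.
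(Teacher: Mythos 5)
Your proof is correct. It rests on the same orthogonal decomposition as the paper's -- writing $a$ as a sum of a piece in $C=\cone(K)$ and a piece in the polar cone $C^{\circ}$ -- but you reach the decomposition and the conclusion by a cleaner, more symmetric route. The paper starts from the spherical nearest point $b\in K$, forms $b^{\ast}=\langle a,b\rangle b$ and $p^{\ast}=a-b^{\ast}$, argues that $p:=p^{\ast}/\|p^{\ast}\|$ lies in $\breve K$ via the supporting-hyperplane characterization of $P_C(a)$, computes $d(a,p)=\pi/2-d(a,K)$ to get the inequality $\le$, and then needs a separate triangle-inequality contradiction to show that this particular $p$ is in fact the nearest point of $\breve K$. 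You instead take $b=P_C(a)$ and $c=P_{C^{\circ}}(a)$ directly (Moreau), prove $\cos d(a,K)=\|b\|$ and $\cos d(a,\breve K)=\|c\|$ by a single symmetric maximization argument, and conclude via $\|b\|^2+\|c\|^2=1$. The upshot is that you never have to argue separately that a constructed point is the minimizer on the dual side: optimality for both $K$ and $\breve K$ falls out of the same two-line estimate, and the final identity is a Pythagoras relation rather than a chain $\le$ plus a contradiction. The one place you should be explicit in a write-up is the identification $\cos d(a,K)=\max\{\langle a,x\rangle : x\in C,\|x\|=1\}$, which you flag yourself; that step is routine but is exactly where the paper's ``$b^{\ast}$ is the point of $C$ closest to $a$'' assertion hides, so spelling it out as you do is a genuine improvement in transparency.
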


\begin{proof}
Let $b\in K$ such that $\ph:=d(a,b)=d(a,K)$.
Since $a\not\in\breve{K}$ we have $\ph<\pi/2$.
The point $b^\ast:=\<a,b\>\,b$ is therefore nonzero and contained
in $C:=\cone(K)$.
Put $p^\ast:=a-b^\ast$.
Then $\<p^\ast,b \>=0$,
$\<p^\ast,a\>=\sin^2\ph$, and
$\<p^\ast,p^\ast\>=\sin^2\ph$.
In particular $p^\ast\ne 0$.

By construction, $b^\ast$ is the point of $C$ closest to $a$.
It follows that
$\{x\in\R^{m+1} \mid \<p^\ast,x\> =0\}$
is a supporting hyperplane of $C$.
Hence $\<p^\ast,x \>\le 0$ for all $x\in C$ and the point
$p:=p^\ast/\|p^\ast\|$ therefore belongs to $\breve{K}$.
Moreover, $\<p,a\>=\sin\ph$,
which implies $d(a,p)=\pi/2-\ph$.
Hence
$$
 d(a,K) + d(a,\breve{K})\le d(a,b) + d(a,p) = \pi/2.
$$
To complete the proof it suffices to show that
$d(a,\breve{K})=d(a,p)$.
Suppose there exists $p'\in\breve{K}$ such that $d(a,p') < d(a,p)$.
Then
$d(b,p') \le d(b,a) + d(a,p') < d(b,a) + d(a,p) =\pi/2$
which contradicts the fact that $b\in\breve{K}$.
\end{proof}

Sometimes it will be useful to consider the
{\em projective distance} between two points $a,b\in S^m$,
which is defined as $\dP(a,b):=\sin d(a,b)$.
It is straightforward to check that $\dP$ satisfies the triangle
inequality. However, it is not a metric on $S^m$, as
$\dP(a,b)=0$ iff $a=\pm b$. Hence the
ball of radius $\sin\a$,
measured with respect to the projective distance,
equals $B(a,\a)\cup B(-a,\a)$.
We denote this set suggestively by $B(\pm a,\a)$
and call it the {\em projective ball} with center $\pm a$
and radius $\a$.

For $0\le\ph\le \pi/2$, the {\em $\ph$-neighborhood} of a
nonempty subset $M$ of $S^m$ is defined as
$T(M,\ph) := \{x\in S^m\mid d(x,M) < \ph  \}$.
If $M$ is the boundary $\partial K$ of a properly convex set $K$ in $S^m$,
we call
$$
 T_o(\partial K,\ph) := T(\partial K,\ph)\setminus K
 \quad\mbox{and}\quad
 T_i(\partial K,\ph) := T(\partial K,\ph) \cap K
$$
the {\em outer $\ph$-neighborhood} and
{\em inner $\ph$-neighborhood} of $\partial K$,
respectively.
Clearly, we have
$T(\partial K,\ph)= T_o(\partial K,\ph)\cup T_i(\partial K,\ph)$.

In order to compute the $m$-dimensional volume of such neighborhoods,
the following functions $J_{m,k}(\a)$ are relevant:
\begin{equation}\label{eq:def-J-fcts}
  J_{m,k}(\a) := \int_0^\a (\sin\rho)^{k-1}\, (\cos\rho)^{m-k}\, d\rho \quad\mbox{($1\le k\le m$)}.
\end{equation}
Recall that
$\Oh_m := \vol S^m = 2\pi^{(m+1)/2}/\Gamma((m+1)/2)$
equals the $m$-dimensional volume of~$S^m$.
It is known that
$\vol\, T(S^{m-k},\a) = \Oh_{m-k}\Oh_{k-1} J_{m,k}(\a)$.
Some estimations of these volumes can be found in~\cite[Lemmas~2.1-2.2]{BCL:08}.

\subsection{The GCC condition number}\label{se:GCC}

We study the problem of deciding for a given instance $A\in\R^{n\times (m+1)}$
whether there exists a nonzero solution
$x\in\R^{m+1}\setminus\{0\}$ such that $Ax\le 0$.
In the following we assume that $n>m+1$.
(The other case is considerably less interesting.)
Without loss of generality we may assume that the row vectors $a_i$ have euclidean length one,
and hence interpret $A=(a_1,\ldots,a_n)$ as an element of the product $(S^m)^n$ of spheres.

We write $\scone(A):=\scone\{a_1,\ldots,a_n\}$ for the convex hull of the given points.
The set of solutions in $S^m$ of the system of inequalities $Ax\le 0$
equals the dual set of $\scone(A)$.

\begin{definition}\label{def:FIS}
An instance $A\in (S^m)^n$ is called \emph{feasible} iff its set of solutions is nonempty,
otherwise $A$ is called \emph{infeasible}.
An instance $A$ is called \emph{strictly feasible} iff its set of solutions has nonempty interior.
We denote by $\Fe_{n,m}$ and  $\Fe_{n,m}^\circ$ the set of feasible and
strictly feasible instances, respectively.
The set of \emph{ill-posed instances} is defined as
$\Sigma_{n,m}:=\Fe_{n,m}\setminus\Fe_{n,m}^\circ$.
The set of infeasible instances is denoted by $\In_{n,m}$.
\end{definition}

\begin{remark}
An instance $A\in (S^m)^n$ is strictly feasible iff
$\scone(A)$ is properly convex, 
that is, $\cone(A)$ is pointed.
Furthermore, a feasible instance~$A$ is ill-posed iff
$0$ is contained in the euclidean convex hull of $a_1,\ldots,a_n$
(cf.~\cite[Lemma~3.2]{BCL:08a}).
\end{remark}

We remark that $\Fe_{n,m}$ is a
compact subset of $(S^m)^n$
with nonempty interior $\Fe_{n,m}^\circ$ and
topological boundary~$\Sigma_{n,m}$.
Moreover, $\In_{n,m}$ is nonempty and $\Sigma_{n,m}$ is also
the topological boundary of $\In_{n,m}$.
(Here we use $n>m+1$.)

We define a metric on $(S^m)^n$ by setting for $A,B\in (S^m)^n$
with components $a_i,b_i\in S^m$
$$
d(A,B) \; := \; \max_{1\le i\le n} d(a_i,b_i) \; .
$$
The distance of $A$ to a nonempty subset $M\subseteq(S^m)^n$ is defined as
$d(A,M) := \inf\{d(A,B) \:|\: B\in M\}$.
We denote by $B(\bar{A},\a):=\{A\in (S^m)^n\mid d(A,\bar{A}) \le \a \}$ the closed ball
with center $\bar{A}$ and radius~$\a$.
Clearly, this is the product of the balls $B(\bar{a}_i,\a)$ for $i=1,\ldots,n$.

The following definition is due to Goffin~\cite{goff:80}
and Cheung and Cucker~\cite{ChC:01}.

\begin{definition}\label{def:GCC}
The GCC condition number of $A\in (S^m)^n$ is
defined as $\scC(A) = 1/\sin d(A,\Sigma_{n,m})$.
\end{definition}

This condition number can be characterized in a more explicit way.

\begin{definition}\label{def:SIC}
A \emph{smallest including cap} (SIC) for $A\in (S^m)^n$ is a
spherical cap of minimal radius containing the points $a_1,\ldots,a_n$.
\end{definition}

We remark that by a compactness argument, a SIC always exists.
It can be shown that a SIC is unique if $A$ is strictly feasible.
However, for infeasible~$A$, there may be several SICs (consider for instance three
equidistant points on the circle).
We denote the radius of a SIC of $A$ by $\rho(A)$.
An instance $A$ is strictly feasible iff $\rho(A)<\pi/2$.
For more information on this we refer to~\cite{ChCH:05,BCL:08a}.

The following result is due to Cheung and Cucker~\cite{ChC:01}.
This characterization is essential for any probabilistic analysis of the GCC condition number.

\begin{theorem}\label{thm:GCC-dist}
We have
  \[ d(A,\Sigma_{n,m}) \; = \;
  \begin{cases} \frac{\pi}{2}-\rho(A) & \text{if $A\in\Fe_{n,m}$,}
             \\ \rho(A)-\frac{\pi}{2} & \text{if $A\in(S^m)^n\setminus\Fe_{n,m}$.}
  \end{cases} \; \]
In particular, $d(A,\Sigma_{n,m})\le\frac{\pi}{2}$ and
$\scC(A)^{-1} = |\cos \rho(A)|$.
\end{theorem}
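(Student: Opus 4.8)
The plan is to reduce the statement to a purely metric fact about the radius function $\rho\colon(S^m)^n\to[0,\pi]$, $A\mapsto\rho(A)$. First observe that $A$ is feasible exactly when the system $\langle a_i,x\rangle\le0$ $(i=1,\dots,n)$ has a solution $x\in S^m$, equivalently when $d(a_i,-x)\le\pi/2$ for all $i$, i.e. when the $a_i$ lie in a common spherical cap of radius $\le\pi/2$; hence $A\in\Fe_{n,m}\iff\rho(A)\le\pi/2$. Combining this with the stated equivalence $A\in\Fe_{n,m}^\circ\iff\rho(A)<\pi/2$ and with $\Sigma_{n,m}=\Fe_{n,m}\setminus\Fe_{n,m}^\circ$ gives the clean description
\[
  \Sigma_{n,m}=\{A\in(S^m)^n\mid\rho(A)=\tfrac{\pi}{2}\}.
\]
Thus the claimed formula is equivalent to $d(A,\Sigma_{n,m})=|\tfrac{\pi}{2}-\rho(A)|$, and granting this the remaining assertions follow at once: $d(A,\Sigma_{n,m})\le\pi/2$ because $0\le\rho(A)\le\pi$, and $\scC(A)^{-1}=\sin d(A,\Sigma_{n,m})=\sin|\tfrac{\pi}{2}-\rho(A)|=|\cos\rho(A)|$ by elementary trigonometry.

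For the lower bound $d(A,\Sigma_{n,m})\ge|\tfrac{\pi}{2}-\rho(A)|$ I would show that $\rho$ is $1$-Lipschitz with respect to $d$. Indeed, if $c$ is the centre of a SIC of $A$ and $B$ has components $b_i$, then $d(b_i,c)\le d(b_i,a_i)+d(a_i,c)\le d(A,B)+\rho(A)$ for every $i$, so the $b_i$ all lie in the cap $B(c,\rho(A)+d(A,B))$ and therefore $\rho(B)\le\rho(A)+d(A,B)$; by symmetry $|\rho(A)-\rho(B)|\le d(A,B)$. Applying this with $B\in\Sigma_{n,m}$ (so $\rho(B)=\pi/2$) and taking the infimum over such $B$ yields the bound.

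For the upper bound I would construct, assuming $0<\rho(A)<\pi/2$ or $\pi/2<\rho(A)<\pi$ (the values $\rho(A)\in\{0,\pi/2\}$ being handled directly), an instance $B\in\Sigma_{n,m}$ with $d(A,B)=|\tfrac{\pi}{2}-\rho(A)|$. Fix a SIC centre $c$, put $\rho:=\rho(A)$, let $I:=\{i\mid d(a_i,c)=\rho\}$ be the contact set, and write $P_cx:=x-\langle x,c\rangle c$ for the orthogonal projection onto $c^\perp$. For $i\in I$ set $b_i:=P_ca_i/\|P_ca_i\|=(a_i-\cos\rho\cdot c)/\sin\rho$, i.e. $a_i$ moved along the geodesic through $c$ until it reaches the equator $c^\perp$, a displacement of exactly $|\tfrac{\pi}{2}-\rho|$; for $i\notin I$ leave $a_i$ in place if $d(a_i,c)\le\pi/2$, and otherwise move it towards $c$ onto $c^\perp$ (a displacement $d(a_i,c)-\pi/2<|\tfrac{\pi}{2}-\rho|$, with the geodesic direction well defined since $a_i\neq c$). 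Then $d(A,B)=|\tfrac{\pi}{2}-\rho|$, and every $b_i$ lies in the closed cap $B(c,\pi/2)$, so $\rho(B)\le\pi/2$ and $B$ is feasible. It remains to show $B$ is not strictly feasible, i.e., by the remark following Definition~\ref{def:FIS}, that $0\in\conv\{b_1,\dots,b_n\}$. Here I would invoke the first-order optimality condition for the SIC centre: since $c$ maximises $x\mapsto\min_i\langle a_i,x\rangle$ over $S^m$, a short perturbation argument on the sphere (valid for either sign of $\cos\rho$) shows there is no $v\in c^\perp$ with $\langle a_i,v\rangle>0$ for all $i\in I$, hence $0\in\conv\{P_ca_i\mid i\in I\}$, say $\sum_{i\in I}\lambda_iP_ca_i=0$ with $\lambda_i\ge0$ and $\sum_{i\in I}\lambda_i=1$. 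Since $\langle a_i,c\rangle=\cos\rho$ for $i\in I$, this reads $\sum_{i\in I}\lambda_ia_i=\cos\rho\cdot c$, and therefore $\sum_{i\in I}\lambda_ib_i=\tfrac{1}{\sin\rho}\sum_{i\in I}\lambda_i(a_i-\cos\rho\cdot c)=0$. Thus $0\in\conv\{b_i\mid i\in I\}\subseteq\conv\{b_1,\dots,b_n\}$, so $B\in\Sigma_{n,m}$ and $d(A,\Sigma_{n,m})\le|\tfrac{\pi}{2}-\rho(A)|$, which together with the lower bound completes the proof.

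The step I expect to be the main obstacle is the first-order optimality condition $0\in\conv\{P_ca_i\mid i\in I\}$ for the SIC centre — a spherical analogue of the classical characterisation of smallest enclosing balls — whose proof requires a careful perturbation estimate that works regardless of the sign of $\cos\rho$; the bookkeeping for points $a_i$ lying close to $c$ and the separate (routine) treatment of $\rho(A)\in\{0,\pi/2\}$ are the only other points demanding care, everything else being straightforward.
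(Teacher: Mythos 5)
The paper does not actually prove this theorem: it is attributed to Cheung and Cucker~\cite{ChC:01} and used as an imported characterization, so there is no in-paper argument to compare against. That said, your proposal is, as far as I can tell, a correct and self-contained proof. The reduction to the single identity $d(A,\Sigma_{n,m})=\left|\tfrac{\pi}{2}-\rho(A)\right|$ via $\Sigma_{n,m}=\{\rho=\tfrac{\pi}{2}\}$ is clean and uses only facts stated in the paper; the lower bound from $1$-Lipschitz-ness of $\rho$ is elementary and correct; and the upper-bound construction (project the contact points of the SIC radially onto the equator $c^{\perp}$, leave or gently retract the remaining points, and deduce $0\in\conv\{b_i\}$ from the first-order optimality condition $0\in\conv\{P_c a_i\mid i\in I\}$ at the SIC center) does exhibit a point of $\Sigma_{n,m}$ at distance exactly $\left|\tfrac{\pi}{2}-\rho\right|$. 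Two small points of care for a full write-up: the remark ``geodesic well defined since $a_i\ne c$'' should really be $a_i\ne -c$, which holds because $d(a_i,c)\le\rho<\pi$ (and $a_i\ne c$ because $d(a_i,c)>\pi/2$), as $-c$ is the only point with a non-unique geodesic to $c$; and the first-order condition should be spelled out — the perturbation $c(t)=(c+tv)/\sqrt{1+t^2}$, $v\in c^{\perp}$, gives $\tfrac{d}{dt}\langle a_i,c(t)\rangle\big|_{t=0}=\langle a_i,v\rangle$ regardless of the sign of $\cos\rho$, so maximality of $\min_i\langle a_i,\cdot\rangle$ at $c$ rules out $\langle a_i,v\rangle>0$ for all $i\in I$, and Gordan's theorem in $c^{\perp}$ then yields $0\in\conv\{P_c a_i\mid i\in I\}$. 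Neither is a gap, just a detail to pin down.
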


\subsection{Adversarial probability distributions}\label{subsec:advers-distr}

The proof of our robustness result relies on a general boosting technique
developed in~\cite{HM:06}, that allows to extend probability tail bounds obtained
for one fixed distribution to larger classes of distributions.
We explain this technique in our situation of interest.

Let $\nu:=\nu_{\aa,\s}$ denote the uniform distribution on the spherical disk~$B(\aa,\a)$,
where $\a\in(0,\frac{\pi}{2}]$ and $\s:=\sin\a$.
We assume that $\mu$ is a $\nu$-absolutely continuous probability measure, i.e.,
it can be written with a density $f$ as $\mu(G) = \int_G f d\,\nu$
for Borel measurable sets~$G$.
In certain cases, it is possible to bound $\mu(G)$ in terms of $\nu(G)$
if the latter is sufficiently small. This is done with the
{\em smoothness parameter} $\sp$ of $\mu$ with respect to $\nu$, which is defined as
$\sp := \lim_{\d\to 0} \inf\d$, where we have set for $\d\in (0,1)$,
using the convention $\ln 0:= -\infty$,
$$
 \inf\d := \inf\Big\{\frac{\ln \mu(G)}{\ln \nu(G)}\ \mid\ 0 < \nu (G) \le \d \Big\} .
$$
If $\sp$ is positive, we say that $\mu$ is {\em uniformly $\nu$-absolutely continuous}.
In this case, it is easy to see that $\sp$ is the largest nonnegative real number~$s'$
with the property that for all~$\e>0$ there exists a tolerance $\d(\e)>0$ such that
$\nu(G) \le \d(\e)$ implies $\mu(G) \le \nu(G)^{s'-\e}$ for all~$G$, cf.~\cite{HM:06}.

We will apply this framework to a specific class of distributions~$\mu$.
Not only will it be important to know the smoothness parameter, but also to
explicitly compute bounds for the tolerance $\d(\e)$.

An {\em adversarial probability distribution}~$\mu_{\aa}$, for $\aa\in S^m$,
was defined in~\cite{CHL:09} as a $\nu$-absolutely continuous measure
given by $\mu_{\aa}(G) = \int_G f d\,\nu$, where again $\nu$ denotes the uniform
distribution on $B(\aa,\a)$. 
We further require that the density $f$ is of the form $f(x) = g(\sin d(x,\aa))$ with a
monotonically decreasing function $g\colon [0,\s]\to [0,\infty]$ given by
$$
 g(r) =   C r^{-\beta}\, h(r),
$$
where $0\le\beta < m$, and
$h\colon [0,\s]\to [0,\infty)$ is a continuous function satisfying $h(0)\ne 0$.
Thus, the support of $\mu_{\aa}$ is contained in $B(\aa,\a)$
and the density of $\mu_{\aa}$ is radially symmetric with a pole
of order $\b$ at $\aa$. Clearly, $\aa$ is the mean of $a$ with
respect to $\mu_{\aa}$.
It is convenient to assume the normalization
$C := I_m(\a)/I_{m-\beta}(\a)$,
where
$$
I_k(\a) := J_{k,k}(\a) = \int_0^\a (\sin t)^{k-1}\, dt,
$$
compare~\eqref{eq:def-J-fcts}
(we slightly deviate here from the notation in~\cite{CHL:09}).
Then $\mu_{\aa}$ is a probability distribution iff
$$
 I_{m-\b}(\a) = \int_0^\a (\sin t)^{m-\b-1}\,h(\sin t)\,dt .
$$
The maximum of $h$ is denoted by $H:=\sup_{0\le r\le\s} h(r)$,
which is easily seen to satisfy $H\geq 1$.
We note that the uniform distribution on $B(\aa,\a)$
is obtained by choosing $\b=0$, $C=1$ and for $g=h$ the function identically
equal to~$1$. 

The following technical lemma is an immediate consequence of
\cite[Lemmas~3.2-3.3, Equation~(3.1)]{CHL:09}.

\begin{lemma}\label{lem:CHL:09}
\begin{enumerate}
\item The smoothness parameter of $\mu$ with respect to $\nu$ equals
$\sp = 1-\b/m$.

\item For $c:=\frac12 s = \frac{1}{2}(1-\frac{\beta}{m})\leq\frac{1}{2}$
and the {\em tolerance}~$\d_c$ defined by
$$
   \d_c \ :=\ \frac{2}{\pi m}\left(\frac{1}{H}\cdot \sqrt{1-\left(\frac{2}{\pi m}
   \right)^{\frac{1}{m}}}\right)^{1/c}
$$
we have for all measurable $G\subseteq S^m$, 
$$
 \nu(G)\le\d_c \ \Longrightarrow\  \mu_{\aa}(G)\le (\nu(G))^c.
$$
\end{enumerate}
\end{lemma}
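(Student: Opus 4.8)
\emph{Proof idea.}
The plan is to reduce both assertions to the case where the measurable set $G$ is a spherical cap centred at $\aa$, and then to compute directly. The reduction rests on a \emph{rearrangement} (bathtub) inequality. Since the density $f(x)=g(\sin d(x,\aa))$ is a nonincreasing function of $d(x,\aa)$, and since $t\mapsto\nu(B(\aa,t))$ is continuous and strictly increasing from $0$ to $1$ on $[0,\a]$, every measurable $G$ with $\nu(G)=u\in(0,1)$ admits a cap $B:=B(\aa,t)$ with $\nu(B)=u$, and then $\mu_{\aa}(G)\le\mu_{\aa}(B)$: writing $v:=g(\sin t)$ (which is finite, as $t>0$) we have $f\le v$ on $G\setminus B$ and $f\ge v$ on $B\setminus G$, while $\nu(G\setminus B)=\nu(B\setminus G)$ because $\nu(G)=\nu(B)$, so $\mu_{\aa}(G)-\mu_{\aa}(B)=\int_{G\setminus B}f\,d\nu-\int_{B\setminus G}f\,d\nu\le 0$. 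Using the volume formula for spherical caps this reduces everything to the explicit expressions
$$
 \nu(B(\aa,t))=\frac{I_m(t)}{I_m(\a)},\qquad
 \mu_{\aa}(B(\aa,t))=\frac{1}{I_{m-\b}(\a)}\int_0^t(\sin\rho)^{m-\b-1}h(\sin\rho)\,d\rho\ \le\ \frac{H\,I_{m-\b}(t)}{I_{m-\b}(\a)} .
$$

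For part~(1) I would read off the smoothness parameter from the behaviour of these quantities as $t\to 0$. Since $\sin\rho=\rho+O(\rho^3)$ and $h(\sin\rho)\to h(0)\neq 0$, we have $\nu(B(\aa,t))\sim t^m/(m\,I_m(\a))$ and $\mu_{\aa}(B(\aa,t))\sim h(0)\,t^{m-\b}/((m-\b)\,I_{m-\b}(\a))$, hence $\ln\mu_{\aa}(B(\aa,t))/\ln\nu(B(\aa,t))\to(m-\b)/m=1-\b/m$. The rearrangement inequality shows that, among all $G$ with a given small $\nu$-measure, the centred cap \emph{minimises} the ratio $\ln\mu_{\aa}(G)/\ln\nu(G)$ (recall $\ln\nu(G)<0$); combining the two facts yields $\sp=1-\b/m$.

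For part~(2), by the same reduction it suffices to prove that $\nu(B(\aa,t))\le\d_c$ implies $\mu_{\aa}(B(\aa,t))\le\nu(B(\aa,t))^c$. Inserting the explicit expressions turns this into an inequality between the incomplete integrals $I_{m-\b}$ and $I_m$; using elementary two-sided bounds such as $(\sin t)^k/k\le I_k(t)\le(\sin t)^k/(k\cos t)$ together with $\tfrac{2}{\pi}\rho\le\sin\rho\le\rho$ on $[0,\pi/2]$, one bounds $\mu_{\aa}(B(\aa,t))$ by an explicit constant depending only on $m,\b,H$ times $\nu(B(\aa,t))^{\sp}$, and the asserted inequality then holds as soon as $\nu(B(\aa,t))$ drops below an explicit threshold. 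The choice $c=\tfrac12\sp$ rather than $c=\sp$ is exactly what leaves a spare factor $\nu(B(\aa,t))^{\sp/2}$ to absorb that constant and to produce a clean closed form for $\d_c$. Both parts are, in fact, instances of the general boosting technique of~\cite{HM:06} as made explicit in~\cite[Lemmas~3.2--3.3, Eq.~(3.1)]{CHL:09}, and the stated formula for $\d_c$ --- whose $\tfrac{2}{\pi m}$-type constants come from lower bounds on the volume of small spherical caps --- may simply be quoted from there.

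The hard part is not conceptual but the bookkeeping of constants in part~(2): one must choose the elementary estimates for $I_k(t)$ carefully enough that the final comparison of $\mu_{\aa}$ with $\nu$ has constants depending only on $m,\b,H$ and not on $\a=\arcsin\s$, and then solve the resulting one-variable inequality cleanly to recover the closed form of~$\d_c$. The rearrangement step is routine; its only subtlety is that $g$ may take the value $+\infty$ at $0$, which is harmless since $\nu(B(\aa,t))>0$ forces $t>0$.
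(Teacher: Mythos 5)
The paper does not actually prove this lemma: the text immediately preceding the statement declares it to be ``an immediate consequence of \cite[Lemmas~3.2--3.3, Equation~(3.1)]{CHL:09}'', and no argument is given. Your proposal therefore cannot be checked against a proof in \emph{this} paper; what you have done is reconstruct, in outline, the argument that lives in the cited reference.

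Where your sketch is carried out, it is conceptually sound. The bathtub/rearrangement reduction to centred caps is correct: since $f(x)=g(\sin d(x,\aa))$ is a nonincreasing function of $d(x,\aa)$ and $t\mapsto\nu(B(\aa,t))$ increases continuously from $0$ to $1$ on $[0,\a]$, the cap of the same $\nu$-mass dominates any measurable $G$ in $\mu_{\aa}$-mass, and your verification of this (comparing $f$ with the level $v=g(\sin t)$ on the symmetric difference) is clean; the remark that $t>0$ keeps $v$ finite despite the possible pole of $g$ at $0$ is the right point to make. The explicit formulas $\nu(B(\aa,t))=I_m(t)/I_m(\a)$ and $\mu_{\aa}(B(\aa,t))=I_{m-\b}(\a)^{-1}\int_0^t(\sin\rho)^{m-\b-1}h(\sin\rho)\,d\rho$ are correct given the normalization $C=I_m(\a)/I_{m-\b}(\a)$, and the $t\to 0$ asymptotics (using $h$ continuous, $h(0)\ne 0$) do yield $s=1-\b/m$ for part~(1), with the rearrangement step guaranteeing that centred caps realize the infimum defining $s$.

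For part~(2), however, your proposal stops short of a proof: you list the right ingredients ($I_k(t)\le(\sin t)^k/(k\cos t)$, Jordan's inequality, absorbing the $\a$-dependence, choosing $c=\tfrac12 s$ to leave slack), but the actual derivation of the closed form for $\d_c$ --- in particular, checking that the constants come out independent of $\a$ and matching the stated $(2/\pi m)^{1/m}$ expression --- is not carried out; you explicitly defer it to \cite{CHL:09}. Since the paper itself does exactly the same thing, this is consistent with the paper's level of detail, but as a free-standing proof of the lemma it is incomplete at precisely the point that is the quantitative content of part~(2). In short: same route as the underlying reference, a correct and slightly more explicit reduction than the paper bothers to record, and a deliberate deferral of the one nontrivial computation.
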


\begin{remark}
In the case $\b=0$ where the density of $\mu_{\aa}$ has
no singularity the situation simplifies.
Clearly, $\mu_{\aa}(G) \le H\nu (G)$ for all~$G$.
This directly implies that the smoothness parameter equals~$1$.
Moreover, the tolerance $\d = \frac1{H^2}$ is sufficient for
the implication $\nu(G) \le\d \Rightarrow \mu_{\aa}(G) \le \nu(G)^\frac12$.
\end{remark}

Since we will work in the product of spheres $(S^m)^n$ we define
on it the {\em adversarial distributions} $\mu_{\bar{A}}$
with center $\bar{A}=(\aa_1,\ldots,\aa_n)\in(S^m)^n$
by taking the product measure
$\mu_{\bar{A}} := \mu_{\aa_1}\times\ldots\times\mu_{\aa_n}$.
This is a probability distribution whose support is contained in the product of caps
$B(\bar{A},\a):=B(\aa_1,\a)\times\ldots\times B(\aa_n,\a)$.

\section{Robust smoothed analysis of $\CC(A)$}\label{se:USA}

The goal of this section is to provide smoothed analysis estimates
for the condition number $\CC(A)$ in the model where
$A=(a_1,\ldots,a_n)$ is chosen at random according
to an adversarial distribution~$\mu_{\bar{A}}$.
The center $\bar{A}\in(S^m)^n$ of the perturbation is arbitrary.
Recall that $\Fe_{n,m}$ and
$\In_{n,m}$ denote the sets of feasible and infeasible instances
$A\in (S^m)^n$, respectively.

\begin{theorem}\label{th:main} 
Let $\a\in (0,\pi/2]$, $\s=\sin\a$, and
$\bar{A}\in (S^m)^n$, where \mbox{$n>m+1$}.
Assume that
$A\in B(\bar{A},\a)$ is chosen at random
according to an adversarial distribution $\mu_{\bar{A}}$.
Let the exponent $c=\frac12(1-\b/m)$ and the tolerance $\d_c$
be defined as in Lemma~\ref{lem:CHL:09}.
Then we have
\begin{equation}\label{eq:F}\tag{F}
 \Prob\{ A\in \Fe_{n,m}\mbox{ and } \CC(A) \ge t\}
 \le n\left(\frac{13m(m+1)}{2\s}\right)^c\, t^{-c} .
\end{equation}
provided $t\ge \frac{13m(m+1)}{2\s\d_c}$.
Moreover, we have for $t\ge 1$,
\begin{equation}\label{eq:I}\tag{I}
 \Prob\{ A\in \In_{n,m} \mbox{ and } \CC(A) \ge t\}
   \le n \left(\frac{1690 m^2(m+1)}{4\s^2}\right)^c t^{-c} (\d_c^{-c} + c n \ln t) .
\end{equation}
\end{theorem}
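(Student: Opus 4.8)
The plan is to reduce everything to the uniform case via the boosting technique of Lemma~\ref{lem:CHL:09}, and to bound the uniform probabilities using the geometric characterization of $\CC(A)$ from Theorem~\ref{thm:GCC-dist}. By that theorem, $\CC(A)\ge t$ with $A$ feasible means $\rho(A)\ge \pi/2-\arccos(1/t)$, i.e. the SIC of $A$ has radius close to $\pi/2$; equivalently $d(A,\Sigma_{n,m})\le \arcsin(1/t)$, so $A$ lies in the neighborhood $T_{\dist}(\Sigma_{n,m},1/t)$. For the infeasible case, $\CC(A)\ge t$ means $\rho(A)\le \pi/2+\arcsin(1/t)$, so all $n$ points $a_i$ fit into a common cap of radius slightly exceeding $\pi/2$. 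The key structural fact I would use is that being close to $\Sigma_{n,m}$ forces, for \emph{some} index $i$, the point $a_i$ to lie in a thin neighborhood of the boundary of a fixed properly convex set determined by the other $a_j$'s (the dual of their convex hull, or a supporting hyperplane of the SIC). This is where the single-sphere volume estimates enter, presumably the neighborhood-of-convex-set bound Theorem~\ref{th:volbdconv}; one then takes a union bound over the $n$ choices of the special index~$i$.

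Concretely, for the feasible bound~\eqref{eq:F}, I would condition on $a_1,\dots,a_{i-1},a_{i+1},\dots,a_n$, obtaining a fixed spherically convex set $K$ (the solution set, which is a convex body since $A$ is feasible), and observe that $\CC(A)$ large forces $a_i$ to lie within projective distance $\approx 1/t$ of $\partial K$ for the worst $i$. The uniform probability that $a_i\in B(\aa_i,\a)$ falls in such a neighborhood is bounded by $\vol(T(\partial K,\arcsin(1/t)))/\vol(B(\aa_i,\a))$; the numerator is controlled by Theorem~\ref{th:volbdconv} (giving the $m(m+1)$-type factor) and the denominator contributes the $\s^m$ scaling, so the relative volume is $O(m(m+1)/(\s t))$. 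Then Lemma~\ref{lem:CHL:09}(2) boosts this: since $\nu(G)\le \d_c$ (guaranteed by the hypothesis $t\ge \frac{13m(m+1)}{2\s\d_c}$), we get $\mu_{\aa_i}(G)\le \nu(G)^c\le\big(\frac{13m(m+1)}{2\s t}\big)^c$. Summing the $n$ conditional bounds and using independence of the rows gives~\eqref{eq:F}.

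For the infeasible bound~\eqref{eq:I}, the geometry is messier: $A$ infeasible with $\CC(A)\ge t$ means $\rho(A)$ is just above $\pi/2$, so the $n$ points are contained in a cap of radius $\pi/2+O(1/t)$, but there is no single convex body to project onto. I would instead fix the center $c^\ast$ of a SIC and note that each $a_i$ lies within $O(1/t)$ of the half-space $\{x:\langle c^\ast,x\rangle\le 0\}$ — but $c^\ast$ itself depends on $A$. The standard fix (as in~\cite{BCL:08,BCL:08a}) is to discretize: the SIC center ranges over $S^m$, and an $\e$-net argument on the location of $c^\ast$ converts the bound into the uniform-neighborhood estimate at the cost of a net-size factor, which is the source of the extra $\d_c^{-c}+cn\ln t$ term (the $\ln t$ being the logarithm of the net fineness, and $n$ entering because all rows must simultaneously lie near the discretized half-space). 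After boosting each row's uniform bound via Lemma~\ref{lem:CHL:09}, multiplying the $n$ independent factors, and summing over the net, one arrives at~\eqref{eq:I}. I expect the main obstacle to be precisely this infeasible case: making the discretization of the SIC center rigorous and tracking how the net granularity interacts with the $n$-fold product and with the boosting exponent~$c$, so as to land on the stated clean form with constants $1690/4$ and the additive $\d_c^{-c}+cn\ln t$.
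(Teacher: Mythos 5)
Your feasible-case outline matches the paper's strategy (condition on the other rows, union bound over $i$, volume estimate, boost via Lemma~\ref{lem:CHL:09}), but you pick the wrong convex body. The paper's Proposition~\ref{pro:AF} shows that $\CC(A)\ge(m+1)\e^{-1}$ forces $a_i\in T_o(\partial K_i,\ph)$ for some $i$, where $K_i:=-\scone\{a_1,\dots,a_{i-1},a_{i+1},\dots,a_n\}$ is the \emph{negative convex hull of the other constraint vectors}, not the solution set. This distinction matters because the key property needed is that, after conditioning on $a_j$ for $j\ne i$, the set is fixed and does not depend on $a_i$; the solution set you invoke would have to be the one determined by the other rows, and then showing that large $\CC(A)$ forces $a_i$ near its boundary requires passing through the duality in Lemma~\ref{le:dkk}. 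Proposition~\ref{pro:AF} (whose proof uses Helly's theorem) is exactly the ``key structural fact'' you gesture at, and you should not assume it; it is the technical heart of the feasible case.

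Your infeasible-case proposal is a genuinely different strategy from the paper's, and I don't think it is the right one. You suggest an $\e$-net discretization of the SIC center (attributing this to~\cite{BCL:08,BCL:08a}), but the paper explicitly follows~\cite{DST}, and there is no net argument anywhere. Instead the paper uses a \emph{transition-point decomposition}: since $A_{m+1}$ is always feasible, if $A$ is infeasible there is a smallest $k>m$ with $A_k$ feasible and $A_{k+1}$ infeasible; Lemma~\ref{le:CCine} shows $\CC(A_{k+1})\ge\CC(A)$, so one needs only to bound $\Prob\{\EE_k\}$ for each $k$. Then Proposition~\ref{pro:IF} gives $\CC(A_k,b)\,\sin d(b,\partial K_{A_k})\le 10\,\CC(A_k)$, reducing the event to a product $U\cdot V$ of two random variables. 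The feasible tail bound controls $U=\CC(A_k)$, the convex-neighborhood volume bound controls $V=1/\sin d(b,\partial K_{A_k})$ conditionally on $A_k$, and the probabilistic product Lemma~\ref{le:multrva} combines the two tails. The factor $c\,n\ln t$ in~\eqref{eq:I} arises from the $c\,\a\b\,x^{-c}\ln(\cdot)$ term of that lemma and the sum over $k$ from $m+1$ to $n-1$ (not from a net size), and $\d_c^{-c}$ comes from the $\min\{\a x_V^c,\b x_U^c\}$ remainder term. You would have to supply: the monotonicity lemma for $\CC$ under adding rows, the quantitative transition Proposition~\ref{pro:IF}, and the product tail lemma — none of which appear in your sketch, and your acknowledged vagueness about ``making the discretization rigorous'' signals that the path you chose would not land on the stated bound without a qualitatively new idea.
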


\begin{remark}
For the uniform distribution ($\b=0$, $h\equiv1$) the bounds in~\eqref{eq:F}
and~\eqref{eq:I} can easily be improved by avoiding the use of
Lemma~\ref{lem:CHL:09}. In particular, on the right-hand side
of~\eqref{eq:F} and~\eqref{eq:I} one gets $t^{-1}$ and $t^{-1}\ln t$ instead
of $t^{-1/2}$ and $t^{-1/2}\ln t$.
\end{remark}

The overall strategy of the proof of Theorem~\ref{th:main} is the
same as in~\cite{DST}.
However, the crucial component in~\cite{DST},
namely a result due to Ball~\cite{ball:93},
is substituted by Corollary~\ref{cor:volbdconv},
which is stated in the next section.

\subsection{A bound on the volume of convex neighborhoods}\label{se:volbound}

We state here an upper bound on the volume of the intersection of
neighborhoods of spherically convex sets with spherical caps.

\begin{theorem}\label{th:volbdconv}
Let $K$  be a properly convex subset of $S^{m}$, let
$a\in S^m$, and $0<\a,\ph\le\pi/2$.
Then, writing $\s=\sin\a$ and $\e=\sin\ph$, we have the
following upper bound for the volume of the outer neighborhood
of~$\partial K$:
$$
 \frac{\vol (T_o(\partial K,\ph) \cap B(\pm a,\a))}{\vol B(\pm a,\a)} \le
   \sum_{k=1}^{m-1} {m\choose k}\, \Big(1 + \frac{\e}{\s}\Big)^{m-k}\, \Big(\frac{\e}{\s}\Big)^k
    + \frac{m\Oh_m}{2\Oh_{m-1}}\, \Big(\frac{\e}{\s}\Big)^m .
$$
The same upper bound holds for the volume
$\frac{\vol (T_i(\partial K,\ph) \cap B(\pm a,\a))}{\vol B(\pm a,\a)}$
of the inner neighborhood of $\partial K$.
\end{theorem}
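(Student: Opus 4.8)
The plan is to reduce the theorem to a covering-type argument based on Weyl's tube formula and the spherical kinematic formula, following the strategy of~\cite{BCL:08}. First I would recall that $T_o(\partial K,\ph)$ consists of points $x\notin K$ with $d(x,\partial K)<\ph$; since $K$ is properly convex, $x\notin K$ means $d(x,K)=d(x,\partial K)$, so the outer neighborhood is exactly $\{x\in S^m\setminus K \mid d(x,K)<\ph\}$. The key geometric fact is that the nearest-point projection onto the closed convex body $\overline K$ is well-defined on $S^m\setminus \overline K$, and every point of the outer neighborhood lies on a geodesic segment of length less than $\ph$ emanating orthogonally from $\partial K$. This sets up a ``tube'' picture: $T_o(\partial K,\ph)$ is foliated by normal geodesics over $\partial K$, and I want to bound the volume of the portion of this tube that meets the cap $B(\pm a,\a)$.

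The main step is to integrate over the tube using the normal-coordinates parametrization $(p,t)\mapsto \exp_p(t\,\nu_p)$ for $p\in\partial K$, $t\in[0,\ph)$, where $\nu_p$ is the outward normal. The Jacobian of this map, by Weyl's tube formula in the sphere, is a polynomial in $\cos t$, $\sin t$ whose coefficients are (integrals of) elementary symmetric functions of the principal curvatures of $\partial K$ — and crucially, for a \emph{convex} boundary these curvatures have a fixed sign, so all the curvature terms are nonnegative and can be bounded by the total curvature integrals. The total integrals of these symmetric curvature functions over $\partial K$ are controlled by the intrinsic volumes / Quermassintegrals of $K$; and here one uses the spherical kinematic formula (equivalently, the Gauss–Bonnet-type bounds from Glasauer's thesis~\cite{glas:95}) to say that a properly convex body cannot have larger curvature integrals than a hemisphere/half-sphere configuration — this is where the constants $\binom{m}{k}$ and the factor $m\Oh_m/(2\Oh_{m-1})$ in the bound come from (the latter being essentially $\vol S^{m-1}$-type normalization against a worst-case convex set, which is a great-subsphere or a cap). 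After bounding the tube volume by $\sum_k c_k \e^k$ with appropriate constants, I divide by $\vol B(\pm a,\a) \asymp \Oh_{m-1} J_{m,m}(\a)/\cdots \asymp \s^m$ up to the explicit constant $2\Oh_{m-1}/(m\Oh_m)\cdot$(something), and the intersection with the cap contributes the factors $(1+\e/\s)^{m-k}$ — this last point is because a normal segment of length $<\ph=\arcsin\e$ starting at a point within the cap can only reach points within distance roughly $\arcsin(\s)+\arcsin(\e)$, so the relevant piece of $\partial K$ lives in a slightly enlarged cap whose curvature integrals scale like $(\s+\e)^{k-1}$ versus the $\s^{m-1}$ in the denominator.

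For the inner neighborhood $T_i(\partial K,\ph)=T(\partial K,\ph)\cap K$, I would apply exactly the same argument to the dual body $\breve K$: by Lemma~\ref{le:dkk}, for $a\in S^m\setminus(K\cup\breve K)$ we have $d(a,K)+d(a,\breve K)=\pi/2$, so $d(a,\partial K)<\ph$ with $a\in K$ translates into a statement about $d(a,\partial\breve K)$ with $a\notin\breve K$; since $\breve K$ is also properly convex (as $K$ is a convex body — though here $K$ is only assumed properly convex, so one passes to its closure and notes $\breve K$ has nonempty interior exactly when $K$ is properly convex), the outer-neighborhood bound for $\breve K$ transfers. One must check that $\sin$ of these complementary distances behaves compatibly — but $\sin(\pi/2-\psi)=\cos\psi$, and for the small angles involved the estimate $\e=\sin\ph$ is unchanged, so the same bound results.

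The hard part will be the careful bookkeeping in the kinematic-formula step: showing that the total curvature integrals $\int_{\partial K}\sigma_k\,dp$ of a properly convex hypersurface in $S^m$ are bounded by those of the extremal convex set, with \emph{explicit} constants matching $\binom{m}{k}$ and $m\Oh_m/(2\Oh_{m-1})$, and simultaneously tracking how intersecting with the cap $B(\pm a,\a)$ localizes these integrals to yield the $(1+\e/\s)^{m-k}(\e/\s)^k$ shape rather than just $(\e/\s)^k$. This is precisely the technical heart deferred to Section~\ref{se:volnbS}, and it is where Weyl's tube formula and the spherical kinematic formula must be combined quantitatively rather than merely qualitatively. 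The rest — the reduction of $T_o$ to a normal tube, the projection argument, and the duality reduction of the inner case — is comparatively routine convex geometry on the sphere.
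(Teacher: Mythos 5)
Your outline for the outer neighborhood matches the paper's strategy closely: parametrize the tube by normal geodesics, use Weyl's tube formula with the sign-definite curvature integrands for convex boundaries, bound the localized curvature integrals $\mu_i(\partial K\cap B(a,\beta))$ via the spherical kinematic formula (in the paper this reduces to counting intersections with a generic circle, not to comparison with an extremal body, but that is a detail), enlarge the cap to radius $\arcsin(\sigma+\varepsilon)$ to capture the base points of the normal segments, and finally normalize by $\vol B(\pm a,\alpha)\ge 2\Oh_{m-1}\sigma^m/m$. So far this is essentially the paper's proof.

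However, your proposed reduction of the inner neighborhood to the dual body does not work as stated, and this is a genuine gap. Lemma~\ref{le:dkk} is a statement about points $a\in S^m\setminus(K\cup\breve K)$, i.e., points \emph{outside} both bodies. For a point $x$ in the inner neighborhood $T_i(\partial K,\ph)=T(\partial K,\ph)\cap K$ we have $x\in K$, so the lemma's hypothesis fails; there is no identity relating $d(x,\partial K)$ to $d(x,\partial\breve K)$ for such $x$. Indeed, what the lemma gives is a relation between $d(x,K)$ and $d(x,\breve K)$ in the ``annular'' region between the two bodies, which yields nothing about the interior side of $\partial K$. The paper avoids this by observing that the Weyl-type bound of Proposition~\ref{pro:tube-vol-mc} is stated simultaneously for the inner and outer tubes $T^\perp_i$ and $T^\perp_o$ (the absolute-curvature version of the tube formula is sign-blind), so the entire argument — containment in an inner tube over the enlarged cap, curvature estimates, normalization — goes through verbatim for $T_i$. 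No duality is needed, and duality does not furnish the desired volume comparison because the Gauss map $\nu\colon\partial K\to\partial\breve K$ distorts volume by the Gaussian curvature.

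A second, smaller omission: your normal-coordinates setup (normal field, principal curvatures, orthogonal foliation) presupposes $\partial K$ is smooth with nonvanishing Gaussian curvature, but the theorem is for an arbitrary properly convex $K$. The paper proves the smooth case first and then passes to the general case by Hausdorff-approximating $K$ by smooth convex bodies (Lemma~\ref{le:pert}) and letting the approximation parameter go to zero, noting that the tube volume $\vol T(\partial K,\delta)$ tends to zero. This limiting step needs to be included; it is not automatic because the tube parametrization is only a diffeomorphism for smooth strictly convex bodies.
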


The main result of \cite[Theorem 1.2]{BCL:08}
gives a bound on the volume of the intersection of a projective ball
$B(\pm a,\a)=B(a,\a)\cup B(-a,\a)$
with the $\ph$-neighborhood of
a real algebraic hypersurface in the sphere $S^m$,
given as the zero set of polynomial of degree $d$.
The above theorem says that essentially the same volume bound holds
for the boundary of a properly convex set~$K$ in $S^m$,
if  we formally replace in this bound the degree~$d$ by $1/2$.

The proof of Theorem~\ref{th:volbdconv}, which is quite involved,
is deferred to Section~\ref{se:volnbS}.
By essentially the same argument as in the proof of \cite[Prop. 3.5]{BCL:08}
one can derive from Theorem~\ref{th:volbdconv} the following corollary.

\begin{corollary}\label{cor:volbdconv}
Under the assumptions of Theorem~\ref{th:volbdconv} we have
the following upper bound for the volume of the outer neighborhood of $\partial K$:
$$
  \frac{\vol (T_o(\partial K,\ph) \cap B(\pm a,\a))}{\vol B(\pm a,\a)} \le
  \frac{13m}{4}\, \frac{\e}{\s}\quad \mbox{ if\/ $\e\le\frac{\s}{2m}$.}
$$
The same upper bound holds for the relative volume
of the inner neighborhood of $\partial K$.
\end{corollary}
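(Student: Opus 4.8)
The plan is to substitute the hypothesis $\e\le\s/(2m)$ into the explicit estimate of Theorem~\ref{th:volbdconv} and then simplify the resulting polynomial in the single quantity $u:=\e/\s$; note that $u\le\tfrac1{2m}\le\tfrac12$, so in particular $2mu\le1$. This mirrors the passage from \cite[Thm.~1.2]{BCL:08} to \cite[Prop.~3.5]{BCL:08}, with the degree $d$ there specialised to $\tfrac12$, and it takes Theorem~\ref{th:volbdconv} itself for granted.

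First I would recognise the sum in Theorem~\ref{th:volbdconv} as a truncated binomial expansion: from $\big((1+u)+u\big)^m=\sum_{k=0}^m\binom mk(1+u)^{m-k}u^k$, whose extremal terms are $(1+u)^m$ (at $k=0$) and $u^m$ (at $k=m$), one gets $\sum_{k=1}^{m-1}\binom mk(1+u)^{m-k}u^k=(1+2u)^m-(1+u)^m-u^m$. Hence the whole right-hand side of Theorem~\ref{th:volbdconv} equals $(1+2u)^m-(1+u)^m+\big(\tfrac{m\Oh_m}{2\Oh_{m-1}}-1\big)u^m$, and I would bound the two pieces separately.

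For the first piece, integrating $m\,t^{m-1}$ from $1+u$ to $1+2u$ gives $(1+2u)^m-(1+u)^m\le mu\,(1+2u)^{m-1}\le mu\,e^{2mu}\le e\,mu$, since $2mu\le1$. For the second piece I would estimate the surface-area ratio using $\Oh_m=2\pi^{(m+1)/2}/\Gamma\big(\tfrac{m+1}2\big)$, so that $\tfrac{m\Oh_m}{2\Oh_{m-1}}=\tfrac{m\sqrt\pi}2\cdot\tfrac{\Gamma(m/2)}{\Gamma((m+1)/2)}$; a standard lower bound for ratios of Gamma values (of Wendel--Kershaw type, $\Gamma(x+1)/\Gamma(x+\tfrac12)\ge\sqrt{x+\tfrac14}$, applied with $x=\tfrac{m-1}2$) then yields $\tfrac{m\Oh_m}{2\Oh_{m-1}}\le\sqrt{\pi m}$ for $m\ge2$. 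Combined with $u^m\le(2m)^{-(m-1)}u$, the second piece is at most $\sqrt{\pi m}\,(2m)^{-(m-1)}u$, which is decreasing in $m$ and, already at $m=2$, is smaller than $(\tfrac{13}4-e)\,mu$. Adding the two pieces gives the asserted bound $\tfrac{13m}4\cdot\tfrac\e\s$ for all $m\ge2$; the case $m=1$ is immediate, since then the sum over $k$ is empty and the right-hand side of Theorem~\ref{th:volbdconv} reduces to $\tfrac{1\cdot\Oh_1}{2\Oh_0}u=\tfrac\pi2\,u\le\tfrac{13}4u$. The statement for the inner neighborhood follows by exactly the same computation, because Theorem~\ref{th:volbdconv} provides the identical upper bound for $T_i(\partial K,\ph)$.

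I do not expect a genuine obstacle here: once Theorem~\ref{th:volbdconv} is available this is a routine estimation, and the whole purpose of assuming $\e\le\s/(2m)$ rather than merely $\e\le\s$ is to force $(1+2u)^m\le e$ and to render the terms $u^k$ with $k\ge2$ --- in particular the $u^m$ term carrying the surface-area ratio --- negligible compared with $u$. The only care needed is in tracking absolute constants so that these lower-order contributions stay below $(\tfrac{13}4-e)\,mu$; the constants in Theorem~\ref{th:volbdconv} have clearly been chosen with this in mind.
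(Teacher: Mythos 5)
Your proof is correct and follows essentially the route the paper intends (the paper merely refers to the analogous derivation of \cite[Prop.~3.5]{BCL:08} from \cite[Thm.~1.2]{BCL:08}): recognise the sum as a binomial telescope, bound $(1+2u)^m-(1+u)^m$ by $mu\,e^{2mu}\le emu$ using $2mu\le 1$, and control the $u^m$ term via $u^{m-1}\le(2m)^{-(m-1)}$ together with a standard bound for the surface-area ratio $\frac{m\Oh_m}{2\Oh_{m-1}}$. All the numerical checks (the Gamma-ratio estimate, monotonicity of $\sqrt{\pi m}\,(2m)^{-(m-1)}$, and the $m=1$ base case) are sound, and the slack $\frac{13}{4}-e$ indeed absorbs the lower-order contribution.
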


\subsection{Two auxiliary results}

The proofs of the following two results are similar as in Dunagan et al.~\cite{DST}.
We use the notation $[n]:=\{1,2,\ldots,n\}$ for $n\in\N$.

\begin{proposition}\label{pro:AF}
Let $A=(a_1,\ldots,a_n)\in\Fe_{n,m}^\circ$,
$0<\ph\le \pi/2$, and $\e=\sin\ph$.
If $\CC(A)\ge (m+1)\,\e^{-1}$,
then there exists $i\in [n]$ such that
$$
 a_i \in T(\partial K_i,\ph)\setminus K_i,
$$
where $K_i:=-\scone\{a_1,\ldots,a_{i-1},a_{i+1},\ldots,a_n\}$.
\end{proposition}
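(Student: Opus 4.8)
Recall that $\scC(A) = 1/|\cos\rho(A)|$ and, for a strictly feasible instance, $\rho(A) < \pi/2$, so $\scC(A)$ large means $\rho(A)$ is close to $\pi/2$, i.e.\ the SIC of $A$ is nearly a hemisphere. The plan is to exploit the following geometric idea: if the SIC is almost a hemisphere, then the points $a_1,\dots,a_n$ are ``spread out'' near the boundary great sphere of that hemisphere, and in particular most of them lie near the boundary of the half-space defining the cap. The key will be to pick the right index $i$ and show that $a_i$ lies close to $\partial K_i$ but outside $K_i$.

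**Main steps.** First I would set $\rho = \rho(A)$ and let $B(c,\rho)$ be a SIC of $A$, with $c \in S^m$ its center; since $A$ is strictly feasible this SIC is unique. Then $\scC(A) = 1/\cos\rho$, so the hypothesis $\scC(A) \ge (m+1)\e^{-1}$ gives $\cos\rho \le \e/(m+1)$, i.e.\ $\pi/2 - \rho$ is small (of order $\e/(m+1)$, using $\cos\rho = \sin(\pi/2-\rho) \ge (2/\pi)(\pi/2-\rho)$ or just $\cos\rho \ge \pi/2 - \rho$ directly is wrong — I'd use $\cos\rho \le \e/(m+1)$ to bound $\pi/2-\rho$ below, actually I want it small so $\cos\rho$ small forces $\rho\to\pi/2$). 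Next, the crucial combinatorial fact: the SIC is determined by a subset of at most $m+1$ of the points $a_i$ touching its boundary — more precisely, $c$ is the (essentially unique) point maximizing $\min_i \langle c, a_i\rangle$, and by a Carathéodory/KKT argument on the sphere there is a subset $S \subseteq [n]$ with $|S| \le m+1$ such that $\rho(A) = \rho((a_j)_{j\in S})$, i.e.\ removing the points outside $S$ does not shrink the SIC. Then I would argue that there must exist an index $i \notin S$ (this uses $n > m+1$, so $[n]\setminus S$ is nonempty): for such $i$, the SIC of $\{a_1,\dots,a_n\}\setminus\{a_i\}$ still has center $c$ and radius $\rho$. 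Now consider $K_i = -\scone\{a_j : j\neq i\}$, the set of feasible solutions to the subsystem. The dual relationship: $\breve{K_i}$ — wait, more directly, $K_i$ is a properly convex set (since the subsystem with $n-1 \ge m+1$ rows is still strictly feasible as the SIC radius is unchanged $<\pi/2$), and $-c$ is the incenter-type point: $d(-c, a_j) = \pi - d(c,a_j) \ge \pi - \rho > \pi/2$ for all $j\neq i$... hmm, that shows $-c \in \breve{(\{a_j\})}$. Let me instead say: the ball $B(-c, \pi/2-\rho)$ is contained in $K_i$ (every point within angular distance $\pi/2-\rho$ of $-c$ has inner product $\le 0$ with every $a_j$, $j\neq i$), and in fact $B(-c,\pi/2-\rho)$ is the largest such inscribed ball, so $d(-c, \partial K_i) = \pi/2 - \rho$ and $-c$ is essentially the center of $K_i$. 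Finally, $a_i$ is a point of $S^m$ with $\langle c, a_i\rangle \ge \cos\rho$ (it lies in the SIC), hence $d(-c, a_i) = \pi - d(c,a_i) \le \pi - \rho$... that's $> \pi/2$, so $a_i \notin K_i$. And $d(a_i, K_i) \le d(a_i, \partial K_i)$: since $d(-c,a_i)\ge \pi/2$ we get $a_i \notin K_i$, and the distance from $a_i$ to $K_i$ is at most $d(-c,a_i) - (\pi/2-\rho) \le (\pi-\rho) - (\pi/2-\rho) = \pi/2$, not good enough. I need a sharper bound: actually $d(a_i, \partial K_i) \le \rho(A) - \text{(something)}$; the right computation is that $a_i$ lies within the SIC so $d(c,a_i) \le \rho$, hence $d(-c, a_i) \ge \pi - \rho$, but also the nearest point of $\partial K_i$ to $a_i$ is at most $d(-c,a_i) - d(-c, \partial K_i)$ away when $a_i$ is outside, $= (\pi - d(c,a_i)) - (\pi/2 - \rho) = \pi/2 - d(c,a_i) + \rho$; since $d(c,a_i)$ can be as small as $0$ this gives $\le \pi/2 + \rho$ which is useless. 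So the bound must come differently — I think one should bound $d(a_i,\partial K_i) \le \cos\rho$-related quantity via $\sin$: the real claim is $\sin d(a_i, \partial K_i) \le$ something, and since $\cos\rho \le \e/(m+1)$ is tiny, $\rho \approx \pi/2$, so $\pi/2 - \rho \approx \cos\rho$ is tiny, hence $d(-c,\partial K_i) = \pi/2 - \rho$ is tiny. Then for ANY point $p$ with $\langle p, a_j\rangle \le 0$ for all $j \neq i$ that is near the boundary... Actually the cleanest route: $d(a_i, \partial K_i) \le d(a_i, K_i) + (\text{0})$, and $d(a_i, K_i) \le d(a_i, B(-c,\pi/2-\rho)) = d(a_i,-c) - (\pi/2-\rho)$. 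Hmm, this is $(\pi - d(c,a_i)) - (\pi/2 - \rho) = \pi/2 - d(c,a_i) + \rho$. For this to be $<\ph$ I need $d(c,a_i) > \pi/2 + \rho - \ph$, which fails for $a_i$ near $c$.

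**The resolution and main obstacle.** I clearly have the geometry slightly off; the fix is that one should NOT use an arbitrary $i\notin S$ but rather choose $i$ cleverly, OR the statement is about $a_i$ being close to $\partial K_i$ where $K_i$ is small (nearly a point), so that $a_i$ being outside $K_i$ but $K_i \subseteq B(-c,\pi/2-\rho+\text{small})$ and $a_i$ is within $\pi/2-\rho+\ph$ of $-c$... Let me reconsider: if the SIC is nearly a hemisphere ($\rho\to\pi/2$), then $\breve{\scone(a_j : j\neq i)} = K_i$ is nearly a single point $\{-c\}$ (its inradius $\pi/2-\rho\to 0$), so $\partial K_i$ is a tiny set near $-c$, and for $a_i$ to be in $T(\partial K_i,\ph)\setminus K_i$ we need $d(a_i, -c) < \ph + (\pi/2-\rho)$ roughly, i.e.\ $a_i$ near $-c$ — but $a_i$ is in the SIC near $c$, the opposite pole! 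So that can't be right either, UNLESS the conclusion uses $K_i = -\scone\{\dots\}$ with the minus sign so that actually we want $a_i$ near $\partial K_i$ and $K_i$ is the NEGATIVE of a nearly-point, so $K_i$ is near $\{c\}$ — wait $-\scone\{a_j\}$ where $\scone\{a_j\}$ is nearly a hemisphere around... no. The set $\scone\{a_j : j \neq i\}$ is the convex hull of points in a near-hemisphere $B(c,\rho)$, so it's a large convex set (not the dual); its negative $-\scone\{a_j\}$ is then the convex hull of points near $B(-c,\rho)$, still a large set with boundary passing near the great sphere $\{x : \langle x, c\rangle = \text{small}\}$ roughly, and $a_i$ (near $c$) has $\langle a_i, c\rangle \approx 1$, distance to that near-boundary $\approx \pi/2 - \rho$... this is small! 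So $d(a_i, \partial K_i) \lesssim \pi/2 - \rho$ and $\sin(\pi/2-\rho) = \cos\rho \le \e/(m+1) < \e = \sin\ph$, giving $d(a_i,\partial K_i) < \ph$. And $a_i \notin K_i = -\scone\{a_j\}$ because $a_i \in \scone\{a_j : j \in [n]\} $, actually need $a_i \notin -\scone\{a_j : j\neq i\}$: if it were, then $0 \in \scone\{a_j : j\neq i\} + \scone\{a_i\}$ — wait that would make the full cone non-pointed, contradicting strict feasibility; that's the argument. So the real plan: (1) $\rho(A)\to\pi/2$ from the condition number bound; (2) pick $i \notin S$ where $S$ is a minimal determining set for the SIC, $|S|\le m+1 < n$; (3) show $\scone\{a_j:j\neq i\}$ and its negative $K_i$ have boundary within $\pi/2-\rho$ of $a_i$ (the heart of it — this needs that $a_i$, being in the SIC $B(c,\rho)$, has $\langle c, a_i\rangle \ge \cos\rho$, and the supporting hyperplane of $\cone\{a_j : j\neq i\}$ with normal close to $-c$ shows the boundary is close); (4) $a_i\notin K_i$ by pointedness/strict feasibility. \textbf{The main obstacle} will be step (3) — carefully producing a point of $\partial K_i$ within angular distance $\ph$ of $a_i$, which requires showing that the supporting halfspace of $\cone\{a_j : j\neq i\}$ "at $-c$" is tight enough, i.e.\ quantitatively relating $d(a_i, \partial(-\scone\{a_j:j\neq i\}))$ to $\cos\rho(A) = \cos\rho((a_j)_{j\neq i})$ — and getting the constant $m+1$ to appear, which presumably comes from a Carathéodory argument bounding how far $a_i$ can be from the convex hull generated by the at-most-$m+1$ active points.
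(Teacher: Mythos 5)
Your plan is direct and constructive — identify a ``determining set'' $S$ of $\le m+1$ active points for the SIC by a Carath\'eodory argument, pick $i\notin S$, and show $a_i$ is close to $\partial K_i$ — but this choice of index is the wrong one, and the argument breaks down there (as you yourself note in your ``main obstacle''). For $i\notin S$ removing $a_i$ leaves the SIC unchanged, but $a_i$ may then lie deep inside the SIC, far from its boundary, and hence far from $K_i$: since $\scone\{a_j:j\neq i\}\subseteq B(c,\rho)$ we have $K_i\subseteq B(-c,\rho)$, so $d(a_i,\partial K_i)\ge d(a_i,B(-c,\rho))\ge (\pi-\rho)-d(c,a_i)$, which is at least $\pi/2$ whenever $d(c,a_i)\le\pi/2-\rho$. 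Concretely, on $S^1$ take $a_1,a_2$ at angle $\pm\rho$ from $c$ and $a_3=c$; then $S=\{1,2\}$, your choice is $i=3$, and $d(a_3,\partial K_3)=\pi-\rho\ge\pi/2\ge\ph$, so the conclusion fails for that $i$ even though it holds for $i=1,2$. So the index you are led to is in general not the one that witnesses the proposition, and the quantitative estimate $d(a_i,\partial K_i)\lesssim\pi/2-\rho$ you write down is asserted but never derived.

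The paper proves the contrapositive by a nonconstructive Helly argument, sidestepping the need to exhibit the good index $i$: assuming $d(a_i,K_i)>\ph$ for \emph{every} $i$, it uses Lemma~\ref{le:dkk} to produce points $p_i\in\inte(\breve{K_i})$ with $\<a_i,p_i\>>\e$ and $\<a_j,p_i\>>0$ for $j\neq i$, forms the open convex sets $C_i=\{x:\<a_i,x\>>\e/(m+1),\ \<x,q\>>0\}\ni p_i$, checks via the averages $p^*=\frac1{m+1}\sum_{j\in I}p_j$ that any $m+1$ of the $C_i$ meet, and applies Helly's theorem (after a perspective map to an affine $m$-space) to get a common point $a$, which is the center of a cap of radius $<\arccos(\e/(m+1))$ containing all $a_i$ — forcing $\CC(A)<(m+1)\e^{-1}$. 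This is where the factor $m+1$ really comes from (Helly in dimension $m$), not from a Carath\'eodory count of active constraints. If you want to pursue a constructive route, you would at least need to choose $i$ among the \emph{active} determining points (in the $S^1$ example, $i=1$ or $2$ works), but then removing $a_i$ changes the SIC and the bookkeeping becomes delicate; the Helly argument avoids this entirely.
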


\begin{proof}
There exists $q\in\scone(A)$ such that $\<a_i,q\> >0$ for all $i\in
[n]$. Indeed, if $q$ is taken as the center of the SIC of $A$ then
this follows from~\cite[Lemma~4.5]{ChCH:05} (see also
\cite[Lemma~3.2]{BCL:08a}).

We note that $a_i\not\in K_i$ for all $i\in[n]$. Otherwise
$0\in\conv\{a_1,\ldots,a_n\}$, hence $A\in\Sigma_{n,m}$, which
contradicts our assumption that $A$ is strictly feasible. It follows
that $d(a_i,\partial K_i) =d(a_i,K_i)$.

We assume now $d(a_i,K_i) > \ph$ for all $i\in [n]$. Our goal is to
show that $\sin d(A,\Sigma_{n,m}) > \frac1{m+1}\e$. Then we are
done, since $\CC(A)^{-1}=\sin d(A,\Sigma_{n,m})$ by
Definition~\ref{def:GCC}.

We proceed now similarly as in~\cite[Lemma~2.3.10]{DST}.
By continuity we assume w.l.o.g.\ that $\ph<\pi/2$. We distinguish two
cases. If $a_i\not\in\breve{K_i}$, then Lemma~\ref{le:dkk} tells us
that $d(a_i,K_i) + d(a_i,\breve{K_i}) = \pi/2$. Hence
$d(a_i,\breve{K_i}) < \pi/2-\ph$. Choose $p_i\in\inte(\breve{K_i})$
such that $d(a_i,p_i) < \pi/2-\ph$. This implies $\<a_i,p_i\> >
\cos(\pi/2-\ph) = \e$. In the case $a_i\in\breve{K_i}$ we take any
$p_i\in\inte(\breve{K_i})$ close enough to $a_i$ such that $
\<a_i,p_i\> > \e$.

In both cases we have achieved the following
\begin{equation}\label{eq:ine}
 \<a_i,p_i\> > \e \quad\mbox{and}\quad
  \forall j\ne i\ \ \<a_j,p_i\> > 0 .
\end{equation}
This implies for all~$i$ that $\<p_i,q\> >0$, as
$q\in\cone\{a_1,\ldots,a_n\}$.

Consider now for $i\in [n]$ the following convex sets in $S^m$
$$
 C_i := \{ x\in S^{m} \mid \<a_i,x\> > \frac{\e}{m+1} \mbox{ and } \<x,q\> > 0\}
$$
containing $p_i$. We claim that the intersection of any $m+1$ of
these sets is nonempty. Indeed, let $I\subseteq [n]$ be of
cardinality $m+1$ and consider $p^*:=\frac1{m+1}\sum_{j\in I} p_j$.
Note that $\|p^*\| \le 1$. We obtain for any $i\in I$, using
(\ref{eq:ine}),
$$
 \<a_i,p^*\> = \frac1{m+1}\sum_{j\in I} \<a_i,p_j\>
  \ge \frac1{m+1} \<a_i,p_i\> > \frac{\e}{m+1} .
$$
Moreover, $\<p^*,q\> >0$, hence $p^*\ne 0$. It follows that
$p:=p^*/\|p^*\|$ is contained in $ C_i$ for any $i\in I$, which
shows the claim.

Consider the affine hyperplane $E:=\{x\in\R^{m+1} \mid \<x,q\> =
1\}$ of dimension $m$ and the perspective map
$$
\pi\colon\{x\in S^{m} \mid \<x,q\> > 0\} \to E, x\mapsto
\<q,x\>^{-1} x .
$$
Then the $\pi(C_i)$ are convex subsets of $E$, with the property
that any $m+1$ of these have a nonempty intersection. Helly's
theorem~\cite{webs:94} implies that $\pi(C_1)\cap\cdots\cap\pi(C_n)$
is nonempty. Hence there is a point $a\in \bigcap_{i=1}^n C_i$. We
have $d(a_i,a) < \a :=\arccos ((m+1)^{-1}\e)$ for all $i\in [n]$.
Hence the spherical cap $B(a,\a)$ strictly contains all
$a_i$. The radius $\rho(A)$ of the SIC of~$A$ is therefore strictly
smaller than $\a$. Hence, by Theorem~\ref{thm:GCC-dist}, $\sin
d(A,\Sigma_{n,m}) = \cos\rho(A) > \cos\a = (m+1)^{-1}\e$, as
claimed.
\end{proof}

The next proposition on the transition from the feasible to the
infeasible case is similar as \cite[Lemma~2.3.14]{DST}.

\begin{proposition}\label{pro:IF}
Let $A=(a_1,\ldots,a_n)\in\Fe_{n,m}$ and $K:=-\scone(A)$. If $b\in
K$, then $(A,b):=(a_1,\ldots,a_n,b)$ is infeasible or ill-posed and
we have
$$
 \CC(A,b)\, \sin d(b,\partial K) \le 10\, \CC(A) .
$$
\end{proposition}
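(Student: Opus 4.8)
The plan is to deduce the estimate from a lower bound on the distance from $(A,b)$ to the set $\Sigma_{n+1,m}$ of ill-posed instances in $(S^m)^{n+1}$, obtained by a perturbation argument. First I would dispose of degenerate cases: if $A$ is feasible but ill-posed then $\CC(A)=\infty$ and there is nothing to prove, and if $b\in\partial K$ then $\sin d(b,\partial K)=0$ and the inequality holds trivially; so I may assume $A$ is strictly feasible and $b\in\inte K$, hence $\rho:=\rho(A)<\pi/2$ and, by Theorem~\ref{thm:GCC-dist}, $\CC(A)=1/\cos\rho$. Fix a SIC center $c$ of $A$, so that $\langle a_i,c\rangle\ge\cos\rho$ for all $i$. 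The first assertion is immediate: $b\in K=-\scone(A)$ gives $-b\in\scone(A)\subseteq\scone(A,b)$, so $\scone(A,b)$ contains the antipodal pair $\pm b$, hence is not properly convex, so $(A,b)$ is not strictly feasible, i.e., it is infeasible or ill-posed.

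Set $t:=d(b,\partial K)>0$. The geometric heart of the proof is the inclusion $B(-b,t)\subseteq\scone(A)$, which holds because $b\in\inte K$, $d(b,\partial K)=t$, and $K$ is convex, together with the Cauchy--Schwarz estimate
$$
 \Big\|\sum_i\lambda_i a_i\Big\|\ \ge\ \Big\langle \sum_i\lambda_i a_i,\,c\Big\rangle\ =\ \sum_i\lambda_i\langle a_i,c\rangle\ \ge\ \cos\rho\cdot\sum_i\lambda_i\qquad(\lambda_i\ge0).
$$
I would use these to prove a robust form of the dual relation ``$Ax\le 0\Rightarrow\langle b,x\rangle\ge\sin t$'' (which is just $\breve{\scone(A)}\subseteq B(b,\pi/2-t)$): if $x\in S^m$ satisfies $\langle a_i,x\rangle\le\e$ for all $i$ with $0\le\e<\cos\rho$, then the displayed estimate shows $\langle z,x\rangle\le\e/\cos\rho$ for every $z\in\scone(A)$, so the cap $B(-b,t)\subseteq\scone(A)$ is disjoint from the open cap around $x$ of radius $\arccos(\e/\cos\rho)$; hence $d(-b,x)\ge t+\arccos(\e/\cos\rho)$, and therefore
$$
 \langle b,x\rangle\ =\ \cos d(b,x)\ \ge\ \sin t\cdot\sqrt{1-\e^2/\cos^2\rho}\ -\ \cos t\cdot\e/\cos\rho .
$$

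Now suppose $(A',b')=(a_1',\dots,a_n',b')\in(S^m)^{n+1}$ is feasible, with solution $x\in S^m$ (so $\langle a_i',x\rangle\le0$ for all $i$ and $\langle b',x\rangle\le0$), and put $\eta:=2\sin\tfrac{1}{2} d((A,b),(A',b'))$, so that $\|a_i-a_i'\|\le\eta$ and $\|b-b'\|\le\eta$. Then $\langle a_i,x\rangle\le\langle a_i-a_i',x\rangle\le\|a_i-a_i'\|\le\eta$; assuming $\eta<\cos\rho$, the bound of the previous paragraph applies with $\e=\eta$, while $\langle b,x\rangle\le\|b-b'\|\le\eta$. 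Comparing the two gives $\eta\,(1+\cos t/\cos\rho)\ge\sin t\,\sqrt{1-\eta^2/\cos^2\rho}$, and a routine calculation shows this is impossible once $\eta\le\tfrac{1}{\sqrt{5}}\cos\rho\sin t$. Taking $\tau\in(0,\pi/2]$ with $2\sin(\tau/2)=\tfrac{1}{\sqrt{5}}\cos\rho\sin t$, we conclude that every instance at distance $<\tau$ from $(A,b)$ is infeasible, hence does not lie in $\Sigma_{n+1,m}\subseteq\Fe_{n+1,m}$, whence $d((A,b),\Sigma_{n+1,m})\ge\tau$. Since $d((A,b),\Sigma_{n+1,m})\le\pi/2$, Definition~\ref{def:GCC} gives $\CC(A,b)^{-1}=\sin d((A,b),\Sigma_{n+1,m})\ge\sin\tau\ge\tfrac{1}{10}\cos\rho\sin t=\tfrac{1}{10}\,\CC(A)^{-1}\sin d(b,\partial K)$ (with room to spare), which is the assertion.

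The step I expect to be the main obstacle is the robustness estimate of the second paragraph: upgrading the exact dual inclusion $\breve{\scone(A)}\subseteq B(b,\pi/2-t)$ to the quantitative statement that an approximate solution of $Ax\le0$ still has $\langle b,x\rangle$ bounded below, with constants under enough control to finish with the factor $10$. Everything else is routine bookkeeping --- passing between angular distances and chord lengths, checking the degenerate cases $A\in\Sigma_{n,m}$ and $b\in\partial K$, and verifying $\tau\le\pi/2$ and $\eta<\cos\rho$ so that the monotonicity of $\sin$ and the robust bound may be applied.
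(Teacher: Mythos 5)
Your proof is correct and follows the same high-level scheme as the paper (a perturbation argument modeled on Dunagan--Spielman--Teng: show that every instance within a controlled angular distance of $(A,b)$ is infeasible, then invoke Theorem~\ref{thm:GCC-dist}). The handling of the degenerate cases and the non-strict-feasibility of $(A,b)$ via the antipodal pair $\pm b\in\scone(A,b)$ is the same observation the paper records in~\eqref{eq:Ksif}. Where you diverge is in how the contradiction is extracted. The paper proves the exact dual bound~\eqref{eq:dpK}, namely $\sin d(b,\partial K)\le\min_{x\in C}\lan b,x\ran$, and then, given a hypothetical nearby feasible $(A',b')$ with solution $x'$, \emph{explicitly corrects} $x'$ to a genuine dual point $\tilde{x}:=x'-\lambda p$ (with $p$ the SIC center and $\lambda=t^{-1}\sin\varphi$), to which~\eqref{eq:dpK} is then applied. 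You instead prove a \emph{quantitatively robust} dual bound valid for approximate solutions: from the inclusion $B(-b,t)\subseteq\scone(A)$ (a geometric restatement of~\eqref{eq:dpK}) and the SIC estimate $\bigl\|\sum_i\lambda_i a_i\bigr\|\ge\cos\rho\sum_i\lambda_i$, you derive via the geodesic argument and the addition formula that $\lan a_i,x\ran\le\e$ for all $i$ forces $\lan b,x\ran\ge\sin t\sqrt{1-\e^2/\cos^2\rho}-\cos t\,\e/\cos\rho$; then the contradiction with $\lan b,x\ran\le\eta$ is routine algebra. Both arguments lean on exactly the same ingredients (the SIC center, duality between $K$ and $C$, and the projective triangle inequality), so the choice is largely a matter of packaging: the paper's version is perhaps more elementary (no spherical trigonometry beyond chord--arc conversions), while yours isolates the robustness as a single self-contained quantitative lemma, which makes the final step more mechanical and, as you note, yields the constant $10$ with room to spare (your computation actually gives $\sqrt{10}$).
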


\begin{proof}
W.l.o.g.\ $A$ is strictly feasible. The set of solutions
$$
 C:=\{x\in S^m \mid \<a_1,x \>\le 0,\ldots,\<a_n,x \>\le 0 \}
$$
is the dual of $\scone(A)$. Hence $\breve{C}=\scone(A)$. This means
that $a\in K$ iff $\<a,x\> \ge 0$ for all $x \in C$. Therefore, we
have for all $a\in S^m$,
\begin{equation}\label{eq:Ksif}
 a\not\in K \Longleftrightarrow
 \exists x \in C\ \<a,x\> < 0
 \Longleftrightarrow \mbox{$(a_1,\ldots,a_n,a)$ is strictly feasible},
\end{equation}
where the second equivalence follows from the assumption that $C$
has non\-empty interior. A similar argument shows that
$(a_1,\ldots,a_n,a)$ is ill-posed iff $a \in\partial K$. Therefore,
we have
$$
 d(b,\partial K) = \min\{ d(b,a) \mid a\in S^m\mbox{  such that }
   (a_1,\ldots,a_n,a) \in \Sigma_{n+1,m} \} .
$$

For proving the proposition we can assume without loss of generality
that $b\in K\setminus\partial K$. Then $(A,b)$ is not strictly
feasible by~(\ref{eq:Ksif}). Moreover, since $b\not\in\partial K$,
$(A,b)$ is not ill-posed. Hence $(A,b)$ is infeasible. We put now
$\dnew  := \sin d(b,\partial K)$ and claim that
\begin{equation}\label{eq:dpK}
 \dnew \le  \min_{x\in C}\<b,x\> .
\end{equation}
In order to show this, suppose $q\in C$. The
equivalence~(\ref{eq:Ksif}) and $b\in K$ imply that $\cos\theta :=
\<b,q\> \ge 0$. W.l.o.g. we may assume that
$\|b-q\cos\theta\|^2=1-\cos^2\theta$ is positive (otherwise
$\theta=0$, $b=q$, and $\<b,q\>=1\ge \dnew$). It therefore makes sense
to define $b':=(b-q\cos\theta)/\|b-q\cos\theta\|$. Then $b'\in S^m$
and $\<b',q\>=0$. Note that $d(b,b')=\pi/2-\theta$. Therefore
$(a_1,\ldots,a_n,b')$ is feasible. It is either strictly feasible,
in which case $b'\not\in K$, or ill-posed, in which case
$b'\in\partial K$ (use~(\ref{eq:Ksif})). Since $b\in K$ we conclude
that $ d(b,\partial K) \le d(b,b') = \pi/2-\theta$. This implies
$$
\dnew=\sin d(b,\partial K) \le \cos\theta =\<b,q\>
$$
and hence the claimed inequality~(\ref{eq:dpK}). Moreover note that
$d(b,\partial K)\le \pi/2$ and $\dnew>0$ as $b\not\in\partial K$.

Suppose now that $B(p,\r)$ is the SIC for $A$. Since we assume $A$
to be strictly feasible $t:=\cos\r$ is positive. By the
characterization of the GCC condition number in
Theorem~\ref{thm:GCC-dist} we have $t=\sin d(A,\Sigma_{n,m})
=\CC(A)^{-1}$.

Put $\varphi :=\arcsin (\frac1{10} t\dnew)$. For proving the
proposition, it is enough to show the implication
\begin{equation}\label{eq:impl-prop}
  \forall (A',b')\in(S^m)^{n+1} \quad d((A',b'),(A,b)) \le \varphi \Longrightarrow (A',b') \mbox{ infeasible}.
\end{equation}
Indeed, this implies (using $d((A,b),\Sigma_{n+1,m})\le \pi/2$,
cf.~Theorem~\ref{thm:GCC-dist})
$$
 \CC(A,b)^{-1} = \sin d((A,b),\Sigma_{n+1,m}) \ge \sin\varphi = \frac1{10}\, t\,\dnew = \frac1{10}\, \CC(A)^{-1} \dnew,
$$
as claimed in the proposition.

We argue by contradiction. Suppose there is a feasible $(A',b')$
having distance at most $\varphi$ from $(A,b)$. Then there exists
$x'\in S^m$ such that
$$
 \<a'_1,x'\>\le 0,\ldots,\<a'_n,x'\>\le 0,\<b',x'\>\le 0 .
$$
Taking into account that $d(a'_i,a_i)\le\varphi$, we see that
$d(a_i,x')\ge \pi/2-\varphi$ and hence $\<a_i,x'\> \le \sin
\varphi$.

We put now $\tilde{x}:= x' - \lambda p$ with
$\lambda:=t^{-1}\sin\varphi$. As $\<a_i,p\>\ge t$, we have for
$i\in[n]$
$$
 \<a_i,\tilde{x}\> = \<a_i,x'\> - \lambda\<a_i,p\> \le \sin\varphi - \lambda t =0 .
$$
Note that $\tilde{x}\ne 0$ (otherwise $t=\sin\ph$, which is
impossible). Therefore, $\tilde{x}/\|\tilde{x}\|$ is well-defined
and lies in $C$. Inequality~(\ref{eq:dpK}) implies that
$\dnew\|\tilde{x}\|\le \<b,\tilde{x}\>$.

Put  $\Delta b:=b'-b$. Then $\|\Delta b\| \le 2 \sin(\varphi/2)$ by
our assumption $d(b',b)\le\varphi$. We obtain
\begin{eqnarray*}
 \<b,\tilde{x}\> &=& \<b'-\Delta b, x' -\lambda p\>
      = \<b',x'\> - \<\Delta b,x' \> - \<b',\lambda p\> + \<\Delta b,\lambda p\> \\
   & \le & 0 + \|\Delta b\| + \lambda + \|\Delta b\| \lambda .
\end{eqnarray*}
To arrive at a contradiction it is enough to verify that
$$
 \|\Delta b\| + \lambda + \|\Delta b\| \lambda < \dnew \|\tilde{x}\| .
$$
Note that $\|\tilde{x}\|\ge 1-\lambda$, $\|\Delta b\| \le 2$, and
$\dnew\le 1$. It is therefore sufficient to check that
$$
 \|\Delta b\| + \lambda + 2\lambda < \dnew - \lambda,
$$
that is,
$$
 \|\Delta b\| + 4\lambda  < \dnew .
$$
Using $\sin\varphi= 2\sin(\varphi/2)\cos(\varphi/2)$ we get $\lambda
=t^{-1}\sin\varphi \le 2 t^{-1}\sin(\varphi/2)$. It is therefore
sufficient to show that
$$
 2\sin\frac{\varphi}{2} + 8 t^{-1} \sin\frac{\varphi}{2} < \dnew,
$$
which is equivalent to
$$
 (t+4)\sin\frac{\varphi}{2} < \frac12 t\dnew.
$$
As $t\le 1$, it is enough to show that $5\sin\frac{\varphi}{2} <
\frac12 t\dnew$. This is true, since by our assumption
$\sin\frac{\varphi}{2} < \sin\varphi = \frac1{10}\, t\dnew$.
\end{proof}

\subsection{Feasible instances}\label{se:feasible}

We provide here the proof of the part of Theorem~\ref{th:main}
dealing with feasible instances. That is, we wish to show the
claimed bound~(\ref{eq:F}).

Let $\bar{A}\in (S^m)^n$, $0<\a \le \pi/2$, $\s=\sin\a$, and $t\ge
13m(m+1)(2\s\d_c)^{-1}$. Put $\e:=(m+1) t^{-1}$ and $\ph:=\arcsin\e$. We
suppose that $A\in(S^m)^n$ is chosen at random according
to a distribution $\mu_{\bar{A}}$ on $(S^m)^n$
as defined in Section~\ref{subsec:advers-distr}.
Using Proposition~\ref{pro:AF} and the notation introduced there, we
have
$$
 \Prob\{ A\in \Fe_{n,m}^\circ\mbox{ and } \CC(A) \ge t\} \le \sum_{i=1}^n
   \Prob\{A\in \Fe_{n,m}^\circ \mbox{ and } a_i\in  T_o(\partial K_i,\ph)\} .
$$
We first bound the probability on the right-hand side for $i=n$ by
expressing it as an integral over $A':=(a_1,\ldots,a_{n-1})$ of
probabilities conditioned on~$A'$.
Note that $\mu_{\bar{A}}=\mu_{\bar{A}'}\times\mu_{\aa_n}$ where
$\bar{A'}:=(\bar{a}_1,\ldots,\bar{a}_{n-1})$.
Moroever, $A\in\Fe_{n,m}^\circ$ iff $A'\in\Fe_{n-1,m}^\circ$ and $a_n\not\in K_n$,
where we set now $K_{A'}:=K_n=-\scone(A')$, see~(\ref{eq:Ksif}).
This implies
\begin{equation}\label{eq:intf}
\begin{split}
   \Prob_{\mu_{\bar{A}}} &\{A\in  \Fe_{n,m}^\circ \mbox{ and } a_n\in  T_o(\partial K_n,\ph)\}  \\
    =& \Prob_{\mu_{\bar{A}}}\{A'\in \Fe_{n-1,m}^\circ \mbox{ and } a_n\in  T_o(\partial K_{A'},\ph)\}  \\
    =& \underset{A'\in\Fe_{n-1,m}^\circ}{\int}
    \Prob_{\mu_{\aa_n}}\{a_n\in T_o(\partial K_{A'},\ph)\}\, d\mu_{\bar{A}'}.
\end{split}
\end{equation}
We fix $A'\in\Fe_{n-1,m}^\circ$ and consider the properly convex set $K_n=K_{A'}$ in~$S^m$.
The bound in Corollary~\ref{cor:volbdconv} on the outer
neighborhood of $\partial K_n$ yields
$$
  \Prob_\nu\{a_n\in T_o(\partial K_n,\ph)\}  =
  \frac{\vol (T_o(\partial K_n,\ph) \cap B(\bar{a}_n,\a))}{\vol B(\bar{a}_n,\a)} \le
  \frac{13m}{2\s}\, \sin\ph ,
$$
where $\nu$ denotes the uniform distribution on $B(\aa_n,\a)$.
The reader should note that $\e\le 2\s\d_c/(13m)\le\s/(2m)$ by assumption. (We win a
factor of two by considering $B(\bar{a}_n,\a)$ instead of
$B(\pm\bar{a}_n,\a)$.) Hence, using $\sin\ph = \e = (m+1) t^{-1}$,
we conclude
$$
 \Prob_\nu\{a_n\in T_o(\partial K_n,\ph) \} \le
   \frac{13m}{2\s}\, \sin\ph =  \frac{13m(m+1)}{2\s t}.
$$
We assume that $13m(m+1)(2\s t)^{-1}\le\d_c$. 
Hence we can apply Lemma~\ref{lem:CHL:09}, which yields
$$
 \Prob_{\mu_{\aa_n}}\{a_n\in T_o(\partial K_n,\ph) \}
  \le \left(\frac{13m(m+1)}{2\s t}\right)^c.
$$
This bound holds for any $A'\in\Fe_{n-1,m}^\circ$. 
We therefore obtain from (\ref{eq:intf})
\begin{eqnarray}\notag
\lefteqn{\Prob_{\mu_{\bar{A}}}\{ A\in \Fe_{n,m}^\circ \mbox{ and } a_n\in T_o(\partial K_n,\ph)\}} \\
  &\le& \left(\frac{13m(m+1)}{2\s t}\right)^c \Prob_{\mu_{\bar{A}'}}\{A'\mbox{ feasible} \} 
  \ \le\ \left(\frac{13m(m+1)}{2\s t}\right)^c . \label{eq:anschluss}
\end{eqnarray}
The same upper bound holds for any $K_i$. Altogether, we obtain
$$
  \Prob_{\mu_{\bar{A}}}\{ A\in \Fe_{n,m}^\circ\mbox{ and } \CC(A) \ge t\}
  \le n\left(\frac{13m(m+1)}{2\s}\right)^c\, t^{-c} ,
$$
which proves Claim~(\ref{eq:F}), since
$\Prob\{A\in\Sigma_{n,m}\}=0$.

\subsection{Infeasible instances}\label{se:infeasible}

We start with a general observation. For $A=(a_1,\ldots,a_n)\in
(S^m)^n$ and $1\le k\le n$ we will write $A_k:=(a_1,\ldots,a_k)$ and
$\bar{A}_k:=(\bar{a}_1,\ldots,\bar{a}_k)$.

\begin{lemma}\label{le:CCine}
Let $A \in (S^m)^n$, $k <n$, such that $A_{k+1}$ be infeasible. Then
$$
 \CC(A_{k+1}) \ge \CC(A).
$$
\end{lemma}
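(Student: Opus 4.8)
The plan is to exploit the characterization of the GCC condition number in terms of the radius $\rho(\cdot)$ of a smallest including cap (Theorem~\ref{thm:GCC-dist}), together with the obvious monotonicity of SIC radii under adding points. First I would observe that adding more rows to an instance can only make the smallest including cap larger: since the points of $A_{k+1}$ form a subset of those of $A$, any cap containing all points of $A$ contains all points of $A_{k+1}$, so $\rho(A_{k+1}) \le \rho(A)$. Next, the hypothesis that $A_{k+1}$ is infeasible means, by the remark following Definition~\ref{def:SIC}, that $\rho(A_{k+1}) \ge \pi/2$; hence also $\rho(A) \ge \pi/2$, i.e.\ $A$ itself is infeasible (or ill-posed). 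So both $A_{k+1}$ and $A$ lie in the infeasible regime where Theorem~\ref{thm:GCC-dist} gives $d(A,\Sigma_{n,m}) = \rho(A) - \pi/2$ and $d(A_{k+1},\Sigma_{k+1,m}) = \rho(A_{k+1}) - \pi/2$.

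The second step is to combine these facts. From $\rho(A_{k+1}) \le \rho(A)$ and both radii being at least $\pi/2$, we get
$$
 d(A_{k+1},\Sigma_{k+1,m}) = \rho(A_{k+1}) - \tfrac{\pi}{2} \le \rho(A) - \tfrac{\pi}{2} = d(A,\Sigma_{n,m}),
$$
and since the sine function is monotonically increasing on $[0,\pi/2]$ and $d(A,\Sigma_{n,m}) \le \pi/2$ (again by Theorem~\ref{thm:GCC-dist}), this yields $\sin d(A_{k+1},\Sigma_{k+1,m}) \le \sin d(A,\Sigma_{n,m})$. Taking reciprocals and invoking Definition~\ref{def:GCC} gives $\CC(A_{k+1}) \ge \CC(A)$, as claimed. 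One should be slightly careful about the boundary case where $\rho(A) = \pi/2$ exactly, i.e.\ $A \in \Sigma_{n,m}$; then $\CC(A) = \infty$ by convention, but in that case $\rho(A_{k+1}) = \pi/2$ too, so $\CC(A_{k+1}) = \infty$ as well and the inequality holds trivially.

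I expect the only genuine subtlety to be the non-uniqueness of the SIC in the infeasible case (noted in the text after Definition~\ref{def:SIC}): one must phrase the monotonicity argument in terms of the radius $\rho$, which is well-defined regardless of uniqueness, rather than in terms of a particular cap. With that caveat the argument is routine, the main work being the reduction — via the feasibility/infeasibility dichotomy and Theorem~\ref{thm:GCC-dist} — to the monotonicity statement $\rho(A_{k+1}) \le \rho(A)$, which is immediate from the definition of a smallest including cap.
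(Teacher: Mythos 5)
Your proof is correct, but it takes a genuinely different route from the paper. You argue via the characterization $\CC(A)^{-1}=|\cos\rho(A)|$ from Theorem~\ref{thm:GCC-dist} and the monotonicity $\rho(A_{k+1})\le\rho(A)$, which holds simply because $A_{k+1}$ is a subset of the points of $A$ and any cap covering all of $A$ covers $A_{k+1}$. The paper instead works directly with the distance to $\Sigma$: it takes a feasible $A'\in(S^m)^n$ realizing $d(A,A')=d(A,\Sigma_{n,m})$, observes that the truncation $A'_{k+1}$ remains feasible (fewer constraints) while $d(A_{k+1},A'_{k+1})\le d(A,A')$ (a max over fewer coordinates), and deduces $d(A_{k+1},\Sigma_{k+1,m})\le d(A,\Sigma_{n,m})$; this last step tacitly uses that $\Sigma_{k+1,m}$ separates $A_{k+1}$ (infeasible) from $A'_{k+1}$ (feasible), so the geodesic between them meets $\Sigma_{k+1,m}$. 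Your approach makes the mechanism more transparent by reducing to the monotone quantity $\rho$ and avoids the boundary-crossing argument; the paper's approach is more generic in that it never invokes the SIC characterization and would apply to any distance-based condition number with the restriction property. One small imprecision in your write-up: ``$A_{k+1}$ infeasible'' gives the strict inequality $\rho(A_{k+1})>\pi/2$ (not just $\ge$), since $\rho=\pi/2$ corresponds to $\Sigma$, i.e.\ ill-posed but still feasible; this does not affect the argument, and in fact strengthens the conclusion that $A$ is honestly infeasible rather than merely ``infeasible or ill-posed.''
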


\begin{proof}
As $A_{k+1}$ is infeasible, $A$ must be infeasible as well. Let
$A'=(a'_1,\ldots,a'_n)$ be feasible such that
$d(A,A')=d(A,\Sigma_{n,m})\le\pi/2$. Then
$A'_k=(a'_1,\ldots,a'_{k+1})$ is feasible and $d(A_k,A'_k)\le
d(A,A')$. Hence we have $d(A_{k+1},\Sigma_{k+1,m})\le
d(A,\Sigma_{n,m})$ and
$$
\CC(A_{k+1})^{-1} = \sin d(A_{k+1},\Sigma_{k+1,m})
   \le \sin d(A,\Sigma_{n,m}) =\CC(A)^{-1},
$$
which was to be shown.
\end{proof}

We also need the following probabilistic lemma.

\begin{lemma}\label{le:multrva}
Let $U$ and $V$ be random variables taking positive values
and $x_U\ge \a>0$, $x_V\ge\b >0$, and $c>0$.
We assume
\begin{eqnarray*}
\Prob\{U\ge x \} &\le& \a\cdot x^{-c} \quad  \mbox{for $x\ge x_U$} \\
\Prob\{V\ge x \mid U\} &\le& \b\cdot x^{-c} \quad  \mbox{for $x\ge x_V$}.
\end{eqnarray*}
Then we have
$$
\Prob\{UV\ge x \} \le c\, \a\b\, x^{-c}\ \ln\max\big\{\frac{x}{x_U x_V},1\big\}
  + \min\{\a\, x_V^c,\b\, x_U^c\}\, x^{-c} .
$$
\end{lemma}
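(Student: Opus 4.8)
The strategy is the standard tail-bound combination via conditioning, splitting the range of~$U$ at a threshold chosen to balance the two regimes. First I would write, for any threshold $\tau>0$ to be fixed later,
$$
 \Prob\{UV\ge x\} \le \Prob\{U\ge \tau\} + \Prob\{UV\ge x,\ U<\tau\}.
$$
The first term is bounded by $\a\tau^{-c}$ using the hypothesis on~$U$ (valid once $\tau\ge x_U$). For the second term I would condition on~$U$ and integrate: on the event $U<\tau$ we have $V\ge x/U\ge x/\tau$, and if $x/\tau\ge x_V$ the hypothesis on~$V$ given~$U$ gives $\Prob\{V\ge x/U\mid U\}\le \b (x/U)^{-c}=\b U^c x^{-c}$. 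Hence
$$
 \Prob\{UV\ge x,\ U<\tau\} \le \b\, x^{-c}\, \bE\big[U^c\,\mathbf{1}_{\{U<\tau\}}\big].
$$

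The next step is to estimate $\bE[U^c\mathbf{1}_{\{U<\tau\}}]$ using only the tail bound on~$U$. Writing $U^c=\int_0^\infty \mathbf{1}_{\{u<U^c\}}\,du=c\int_0^\infty v^{c-1}\mathbf{1}_{\{v<U\}}\,dv$ and truncating, one gets
$$
 \bE\big[U^c\,\mathbf{1}_{\{U<\tau\}}\big] \le \tau^c\,\Prob\{U\ge x_U\}\text{-type terms} + c\int_{x_U}^{\tau} v^{c-1}\,\Prob\{U\ge v\}\,dv
  \le x_U^c + c\int_{x_U}^{\tau} v^{c-1}\cdot \a v^{-c}\,dv,
$$
where the $x_U^c$ bounds the contribution from $U<x_U$ (on which no tail hypothesis is available, so one uses the trivial bound $U^c<x_U^c$). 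The integral is $\a c\int_{x_U}^\tau v^{-1}\,dv=\a c\ln(\tau/x_U)$. Combining, the $U<\tau$ term is at most $\b x^{-c}\big(x_U^c+\a c\ln(\tau/x_U)\big)$.

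Now choose $\tau:=x/x_V$ (this is $\ge x_U$ exactly when $x\ge x_Ux_V$, and makes $x/\tau=x_V\ge x_V$ as required). Then $\Prob\{U\ge\tau\}\le \a\tau^{-c}=\a x_V^c x^{-c}$ and $\ln(\tau/x_U)=\ln(x/(x_Ux_V))$, giving
$$
 \Prob\{UV\ge x\} \le \a x_V^c x^{-c} + \b x^{-c}\big(x_U^c + \a c\ln(x/(x_Ux_V))\big)
 = c\,\a\b\,x^{-c}\ln(x/(x_Ux_V)) + (\a x_V^c + \b x_U^c)x^{-c}.
$$
This already resembles the claim; to get $\min\{\a x_V^c,\b x_U^c\}$ rather than the sum, one repeats the argument with the roles of~$U$ and~$V$ swapped (conditioning on~$V$ first, threshold $\tau=x/x_U$), which by symmetry of the hypotheses yields the same bound with $\a x_V^c$ and $\b x_U^c$ interchanged in the additive term; taking the better of the two bounds gives the minimum. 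Finally, for $x<x_Ux_V$ the logarithm would be negative, so one replaces $\ln(x/(x_Ux_V))$ by $\ln\max\{x/(x_Ux_V),1\}=0$ there, and checks the bound still holds in that range (it does, since the additive term alone dominates: $UV\ge x$ forces $U\ge x_U$ or $V\ge x_V$ type reasoning, or more simply one notes the inequality is only claimed as an upper bound and the additive term $\min\{\a x_V^c,\b x_U^c\}x^{-c}$ can be shown to exceed the trivial bound). The only mildly delicate point is handling the truncated moment $\bE[U^c\mathbf{1}_{\{U<\tau\}}]$ cleanly when $\tau<x_U$ or when the hypothesis range is exited — this bookkeeping, together with verifying the degenerate case $x<x_Ux_V$, is the main (though entirely routine) obstacle; everything else is a direct conditioning-and-integrate computation.
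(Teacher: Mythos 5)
Your strategy — conditioning on $U$, splitting at the threshold $\tau=x/x_V$, and controlling the truncated moment $\bE\big[U^c\mathbf{1}_{\{U<\tau\}}\big]$ by a layer-cake estimate — is essentially the same tail-bound convolution the paper carries out via the Stieltjes-integral identity of \cite[Lemma~C.1]{sast:06}, and your computation correctly yields, for $x\ge x_Ux_V$,
$$
\Prob\{UV\ge x\}\ \le\ c\,\a\b\,x^{-c}\,\ln\frac{x}{x_Ux_V}+\big(\a\,x_V^c+\b\,x_U^c\big)\,x^{-c}.
$$
The gap is in the last step. The additive term $\a\,x_V^c+\b\,x_U^c$ is invariant under exchanging $(U,\a,x_U)\leftrightarrow(V,\b,x_V)$, so redoing your argument with the roles of $U$ and $V$ swapped produces the \emph{identical} bound, not one with the two summands interchanged; taking the better of the two therefore cannot yield $\min\{\a\,x_V^c,\b\,x_U^c\}$. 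As written, your proof only establishes the sum in the additive term. (The range $x<x_Ux_V$ is also more delicate than you suggest: $\min\{\a x_V^c,\b x_U^c\}x^{-c}\ge 1$ need not hold throughout that range.)

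The paper's two applications of the Stieltjes identity do produce distinct bounds, one with $\b\,x_U^c$ and one with $\a\,x_V^c$, because each computation treats the derivative $-g'$ (resp.\ $-f'$) as a purely absolutely continuous density and drops the point mass of the Stieltjes measure $-dg$ at the jump point $s=x_V$ (resp.\ $-df$ at $s=x_U$). Your decomposition, which bounds $\Prob\{U>v\}$ by $1$ on $(0,x_U)$ and bounds $\Prob\{U\ge\tau\}$ separately, keeps track of both of those endpoint contributions, which is precisely why it cannot produce the $\min$. For the downstream $\Oh$-estimates in Theorem~\ref{th:main} the sum is entirely adequate, but as a proof of the lemma as stated the final symmetrization step of your plan fails.
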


\begin{proof}
\cite[Lemma C.1]{sast:06}
with the functions $f,g$ defined as
$$
f(x) = \left\{\begin{array}{ll} 1 &\mbox{ if $x<x_U$}\\ \a\cdot x^{-c} &
                 \mbox{ if $x\ge x_U$} \end{array}\right.
\quad
g(x) = \left\{\begin{array}{ll} 1 &\mbox{ if $x<x_V$}\\ \b\cdot x^{-c} &
                 \mbox{ if $x\ge x_V$} \end{array}\right.
$$
yields
\begin{equation}\label{eq:UVe}
 \Prob\{UV\ge x \} \le \int_0^\infty f\big(\frac{x}{s}\big) (-g'(s))\, ds .
\end{equation}
If $x\ge x_Ux_V$ we estimate this by
\begin{eqnarray*}
 \Prob\{UV\ge x \} &\le& \int_{x_V}^{x/x_U} \a \, x^{-c}\, s^c\, c\,\b\, s^{-c-1}\, ds +
        \int_{x/x_U}^\infty c\,\b\, s^{-c-1}\, ds  \\
 & = & c\,\a\b\,x^{-c}\, \ln\big(\frac{x}{x_Ux_V}\big) + \b\,x_U^c\,x^{-c} .
\end{eqnarray*}
If $x < x_Ux_V$ one argues similarly.

Finally note that (\ref{eq:UVe}) implies
$\Prob\{UV\ge x \} \le \int_0^\infty g\big(\frac{x}{s}\big) (-f'(s))\, ds$,
using integration by parts.
Estimating this as before, with the roles of $f$ and $g$ exchanged,
completes the proof.
\end{proof}

We provide now the proof of the part of Theorem~\ref{th:main}
dealing with infeasible instances, i.e., of the claimed
bound~(\ref{eq:I}). Fix $\bar{A}\in S^m$, $0<\a\le\pi/2$,
$\s=\sin\a$, and $t\ge 1$. Assume $A=(a_1,\ldots,a_n)$ to be chosen
at random according to $\mu_{\bar{A}}$. Then $A_{m+1}$ is always
feasible. Hence, if $A=A_n$ is infeasible, then there exists a
smallest index~$k > m$ such that $A_k$ is feasible and $A_{k+1}$ is
infeasible. If we denote by $\EE_k$ the event
\begin{equation}\label{eq:event}
 \mbox{$A_k$ feasible and $A_{k+1}$ infeasible and $\CC(A_{k+1})\ge t$},
\end{equation}
and take into account Lemma~\ref{le:CCine}, we obtain
\begin{equation}\label{eq:EEstep}
 \Prob\{ A\in \In_{n,m}\mbox{ and } \CC(A) \ge t\} \le
 \sum_{k=m+1}^{n-1}\Prob\EE_k .
\end{equation}

For bounding the probability of $\EE_k$, a change of notation is
convenient. We fix $k$ and write from now on
$$
 A:=(a_1,\ldots,a_k),\quad  K_A:=-\scone\{a_1,\ldots,a_k\},\quad  b:=a_{k+1},
$$
and similarly $\bar{A}:=(\bar{a}_1,\ldots,\bar{a}_k)$,
$\bar{b}:=\bar{a}_{k+1}$. We note that $A$ and $b$ are chosen
independently and at random according to $\mu_{\bar{A}}$ and
$\mu_{\bar{b}}$, respectively.

Proposition~\ref{pro:IF} implies that
$$
 \Prob_{\mu_{\bar{A}}}\EE_k \le \Prob_{\mu_{\bar{A}}}\left\{A\in\Fe_{k,m} \mbox{ and } b\in K_A \mbox{ and }
    \frac{\CC(A)}{\sin d(b,K_A)} \ge \frac{t}{10} \right\} .
$$
The first part of Theorem~\ref{th:main} tells us that
\begin{equation}\label{eq:infeas-red-feas}
 \Prob_{\mu_{\bar{A}}}\big\{A\in\Fe_{k,m} \mbox{ and } \CC(A)\ge x\big\} \le
 k\left(\frac{13m(m+1)}{2\s}\right)^c\, x^{-c}
\end{equation}
provided $x\ge x_U:=13m(m+1)/(2\s\d_c)$. For a fixed strictly feasible $A$,
the set $K_A$ is properly convex in $S^m$. The bound in
Corollary~\ref{cor:volbdconv} on the inner neighborhood of $\partial
K_A$ yields for any $A\in\Fe_{k,m}^\circ$
\begin{equation}\label{eq:star}
 \underset{\nu}{\Prob}\Big\{b\in K_A \mbox{ and } \frac1{\sin d(b,\partial K_A)} \ge x\ \Big|\ A \Big\}
 \le \frac{13m}{2\s} \frac1x ,
\end{equation}
provided $x\ge 2m/\s$ (again $\nu$ denotes the uniform distribution on $B(\bar{b},\a)$).
Applying Lemma~\ref{lem:CHL:09} yields
\begin{equation}\label{eq:star-mu}
 \underset{\mu_{\bar{b}}}{\Prob}\Big\{b\in K_A \mbox{ and } \frac1{\sin d(b,\partial K_A)} \ge x\ \Big|\ A \Big\}
 \le \left(\frac{13m}{2\s}\right)^c x^{-c} ,
\end{equation}
provided $x\ge x_V:= 13m/(2\s\d_c)$.

Let $\bI_M$ denote the indicator function of a set $M$. We combine
the above two probability estimates in~\eqref{eq:infeas-red-feas}
and~\eqref{eq:star-mu} with Lemma~\ref{le:multrva}, setting
$$
 U(A):=\bI_{\Fe_{k,m}}(A)\, \CC(A),\quad
 V(A,b):= \bI_{K_A}(b)\, \frac1{\sin d(b,\partial K_A)} .
$$
Note that
$$
 \Prob\EE_k \le \Prob\{ U(A)\cdot V(A,b) \ge t/10 \}.
$$
We have for $t\ge 1$ and $x=t/10$
$$
 \max\left\{\frac{x}{x_U x_V},1\right\} =
    \max \left\{ \frac{4\,\s^2\,\d_c^2\, t}{1690\,m^2(m+1)}, 1\right\} \le t .
$$
Let $\alpha:=k\left(\frac{13 m(m+1)}{2\s}\right)^c$ and
$\beta:=\left(\frac{13m}{2\s}\right)^c$. Note that
$\a\b=k\cdot\left(\frac{169\,m^2(m+1)}{4\s^2}\right)^c$
and $\min\{\alpha x_V^c,\beta x_U^c\}=\left(\frac{169 m^2(m+1)}{4\s^2\d_c}\right)^c$.
Lemma~\ref{le:multrva} implies that
\begin{equation}\label{eq:interface}
 \Prob\EE_k \le c\, k\, \left(\frac{1690\,m^2(m+1)}{4\,\s^2}\right)^c\,t^{-c}\, \ln t
  + \left(\frac{1690\,m^2(m+1)}{4\s^2\,\d_c}\right)^c\, t^{-c} .
\end{equation}
Plugging in this bound into~(\ref{eq:EEstep}) finishes the proof of
Theorem~\ref{th:main}. \hfill$\Box$


\subsection{Proof of Theorem~\ref{th:Emain}}

We will now give the proof of Theorem~\ref{th:Emain} by
deriving estimates of the expectation of $\ln\CC$ from the tail bounds
given in Theorem~\ref{th:main}.
Let $A\in(S^m)^n$ be chosen at random according
to an adversarial distribution $\mu_{\bar{A}}$.
Combining~\eqref{eq:F} and~\eqref{eq:I} from Theorem~\ref{th:main}
we have for $t\ge 13m(m+1)(2\s\d_c)^{-1}$,
\begin{eqnarray*}
   \Prob\{ \CC(A) \ge t\} & = &
       \Prob\{ A\in \Fe_{n,m} \mbox{ and } \CC(A) \ge t\}
\\ & & \qquad\qquad + \Prob\{ A\in \In_{n,m} \mbox{ and } \CC(A) \ge t\}
\\ & \le & \textstyle n\, \left(\frac{13m(m+1)}{2\s}\right)^c \, t^{-c}
           + n\, \left(\frac{1690m^2(m+1)}{4\s^2\d_c}\right)^c \, t^{-c}
\\ & & \qquad \textstyle + c n^2\, \left(\frac{1690m^2(m+1)}{4\s^2}\right)^c \, t^{-c}\, \ln t .
\end{eqnarray*}
Defining $D_1:=n \left(\frac{13m(m+1)}{2\s}\right)^c$,
$D_2:=D_1\cdot\left(\frac{130 m}{2\s\d_c}\right)^c$,
$D_3:=c n\,D_1\cdot\left(\frac{130 m}{2\s}\right)^c$
we can write shortly
\begin{eqnarray*}
   \Prob\{ \CC(A) \ge t\} & \le & D_1\, t^{-c}
                 + D_2\, t^{-c} + D_3\, t^{-c} \ln t .
\end{eqnarray*}
Note that $D_2\geq D_1$ and $D_3\geq D_1$.
It is convenient to use tail estimates of $\ln\CC$ instead of $\CC$, so we reformulate
\begin{equation}\label{eq:tail-estim-mu-exp}
   \Prob\{ \ln\CC(A) \ge s\} \le D_1\, e^{-c s}
                 + D_2\, e^{-c s} + D_3\, e^{-c s} s ,
\end{equation}
which holds for $s\geq \ln\left(\frac{13 m(m+1)}{2\s\d_c}\right)$.
We define
$s_i := \ln D_i^{1/c} + \ln \d_c^{-1}$,
i.e.~$\d_c = D_i^{1/c}\cdot e^{-s_i}$, for $i=1,2,3$.
Note that $s_2 = s_1 + \ln\left(\frac{130 m}{2\s\d_c}\right)\geq s_1$, $s_3= s_1 
 + c^{-1}\,\ln(c n) + \ln\left(\frac{130 m}{2\s}\right) \geq s_1$, and
$$
 s_1 = \textstyle\frac{1}{c} \ln n + \ln \left(\frac{13 m(m+1)}{2\s}\right) + \ln \d_c^{-1}
     \ge \ln \left(\frac{13 m(m+1)}{2\s\d_c}\right) .
$$
The definition of $\d_c$ in Lemma~\ref{lem:CHL:09} 
and the inequality
\begin{equation}\label{eq:est-CHL}
  \textstyle \left(1-\left(\frac{2}{\pi m}\right)^{\frac{1}{m}}\right)^{-1/2}
\le\sqrt{\frac{2m}{\ln(\pi m/2)}} ,
\end{equation}
which is shown by a small computation,
lead to the following estimate of $s_1$
\begin{equation}\label{eq:estim-s_e}
\begin{array}{rcl}
  s_1 & \le & \ln D_1^{1/c} + \ln \frac{\pi m}{2} + \frac{1}{2c}\ln\left(\frac{2H^2 m}{\ln(\pi m/2)}\right)
\\[2mm] & = & c^{-1} \ln n + \ln\left(\frac{13 m(m+1)}{2\s}\right) 
 + \ln \frac{\pi m}{2} + \frac{1}{2c}\ln\left(\frac{2H^2 m}{\ln(\pi m/2)}\right) \; .
\end{array}
\end{equation}
We distinguish the cases $s_3\geq s_2$ and $s_3<s_2$. For $s_3\geq s_2$ we get with~\eqref{eq:tail-estim-mu-exp}
\begin{eqnarray*}
   \bE\big(\ln\CC\big) & = & \int_0^\infty
                 \Prob\{\ln\CC(A) \ge s\} \, ds
\\ & \le & \int_0^{s_3} 1\, ds +
  \int_{s_3}^\infty \big(D_1\, e^{-c s} + D_2\, e^{-c s} + D_3\, e^{-c s} \, s \big) \,ds
\\ & \le & s_3 + \int_{s_1}^\infty D_1\, e^{-c s} \,ds + \int_{s_2}^\infty D_2\, e^{-c s} \,ds
                + \int_{s_3}^\infty D_3\, e^{-c s} \, s \,ds
\\ & = & s_3 +  c^{-1}\d_c^c +c^{-1}\d_c^c  + c^{-2}(1+cs_3) \d_c^c
\\ & \leq & (1+c^{-1})\, s_3 + 2c^{-1} + c^{-2}
\\ & = & \textstyle(1+c^{-1})\, s_1
              + (1+c^{-1})c^{-1}\,\ln(c n)+ (1+c^{-1})\ln\left(\frac{130 m}{2\s}\right)
\\ & & \qquad + 2c^{-1} + c^{-2} .
\end{eqnarray*}
With the estimate of $s_1$ in \eqref{eq:estim-s_e} a small computation yields
\begin{eqnarray*}
   \bE\big(\ln\CC\big) & \le & (1+c^{-1})\Big(2 c^{-1}\ln n + 3\ln m + \ln (m+1) + 2\ln \frac{1}{\s}
\\ & & \qquad + (2c)^{-1}\ln\left(\frac{2 H^2 m}{\ln(\pi m/2)}\right) + c^{-1} + c^{-1}\ln c + 6.5 \Big) + c^{-1} .
\end{eqnarray*}
This yields
$\bE\big(\ln\CC\big) =\Oh\big( c^{-2} \ln\big(\frac{nH}{\s}\big)\big)$.
In the case $s_3 <s_2$ a similar argument holds.
This shows the first statement of Theorem~\ref{th:Emain}.
Finally, tracing the constants in the case $\b=0$,
yields the asserted explicit bound.
\hfill$\Box$

\subsection{Average-case analysis}\label{se:average}

We show here that for the uniform distribution on $S^m$,
the probability tail estimates in Theorem~\ref{th:main} on $\CC(A)$
can be significantly improved by essentially the same method.

\begin{proposition}\label{pro:av}
For $A\in (S^m)^n$ chosen uniformly at random we have 
$$
 \Prob\{\CC(A)\ge t \}\ =\ \Oh\big((m+1)^5\, \frac1t\, \ln t\big) .
$$
Moreover, $\bE(\ln\CC(A)) =\Oh(\ln m)$ as stated in Corollary~\ref{cor:average}.
\end{proposition}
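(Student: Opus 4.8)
\noindent\emph{Proof proposal.}
The plan is to rerun the proofs of Theorems~\ref{th:main} and~\ref{th:Emain} in the special case where each $\mu_{\aa_i}=\nu$ is the uniform distribution on all of $S^m$, using two simplifications. First, since $\mu=\nu$, the smoothness machinery of Lemma~\ref{lem:CHL:09} is not needed and one may work directly with volume ratios; this amounts to taking the exponent $c=1$ throughout, so that the tails decay like $t^{-1}$ and $t^{-1}\ln t$ rather than $t^{-1/2}$ and $t^{-1/2}\ln t$. Concretely, applying Corollary~\ref{cor:volbdconv} with $\a=\pi/2$ and $\s=1$, so that the projective ball $B(\pm a,\pi/2)$ is all of $S^m$, gives for any properly convex $K$ in $S^m$
$$
 \frac{\vol T_o(\partial K,\ph)}{\Oh_m}\ \le\ \frac{13m}{4}\,\sin\ph , \qquad
 \frac{\vol T_i(\partial K,\ph)}{\Oh_m}\ \le\ \frac{13m}{4}\,\sin\ph ,
$$
whenever $\sin\ph\le\frac1{2m}$.

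The second, and crucial, point is what makes the dependence on $n$ disappear. When one conditions on all rows but one, as in~\eqref{eq:intf}, the inner integral runs over \emph{feasible} configurations of the remaining rows, so the union bound over the $n$ rows produces not a factor $n$ but a factor $n\,p_{n-1,m}$, where $p_{k,m}:=\Prob\{A_k\in\Fe_{k,m}\}$ is the probability that $k$ independent uniform points of $S^m$ lie in a common closed hemisphere. An elementary union bound over the $\binom{k}{m}$ possible $m$-subsets spanning a supporting hyperplane (or Wendel's classical formula) shows $p_{k,m}\le 1$ for all $k$ and that $p_{k,m}$ decays geometrically once $k$ exceeds a suitable constant multiple of~$m$. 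Hence, uniformly in $n$,
$$
 \sup_{n}\, n\,p_{n-1,m}=\Oh(m),\qquad \sum_{k=m+1}^{n-1}k\,p_{k-1,m}=\Oh(m^2),\qquad \sum_{k=m+1}^{n-1}\min\{k\,p_{k-1,m},1\}=\Oh(m).
$$

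With these ingredients the argument of Section~\ref{se:feasible}, with $\e=(m+1)/t$ as in Proposition~\ref{pro:AF} and $t\ge 2m(m+1)$, gives
$\Prob\{A\in\Fe_{n,m}\text{ and }\CC(A)\ge t\}\le n\,p_{n-1,m}\cdot\frac{13m}{4}\cdot\frac{m+1}{t}=\Oh(m^3/t)$,
and the argument of Section~\ref{se:infeasible} goes through verbatim with $c=1$, $x_U=2m(m+1)$, $x_V=2m$, $\beta=\frac{13m}{4}$, $\alpha_k=k\,p_{k-1,m}\cdot\frac{13m(m+1)}{4}$ in Lemma~\ref{le:multrva}; summing~\eqref{eq:interface} over $k$ and using $\sum_k\alpha_k\beta=\Oh(m^5)$ and $\sum_k\min\{\alpha_k x_V,\beta x_U\}=\Oh(m^4)$ yields $\Prob\{A\in\In_{n,m}\text{ and }\CC(A)\ge t\}=\Oh(m^5 t^{-1}\ln t)$. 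Adding the two contributions proves $\Prob\{\CC(A)\ge t\}=\Oh\big((m+1)^5 t^{-1}\ln t\big)$, the estimate being valid for $t$ above a threshold of order $m^3$ and trivial below it. To deduce $\bE(\ln\CC(A))=\Oh(\ln m)$ one writes $\bE(\ln\CC(A))=\int_0^\infty\Prob\{\CC(A)\ge e^s\}\,ds$ and splits at $s^\ast:=\ln t^\ast$, where $t^\ast=\Theta(m^5\ln m)$ is chosen so that $C(m+1)^5 t^{-1}\ln t\le 1$ for $t\ge t^\ast$: the piece $[0,s^\ast]$ contributes $s^\ast=\Oh(\ln m)$, while on $[s^\ast,\infty)$ one bounds the integrand by $C(m+1)^5 s\,e^{-s}$ and uses $\int_a^\infty s\,e^{-s}\,ds=(a+1)e^{-a}$ to obtain a contribution of $C(m+1)^5(s^\ast+1)/t^\ast=\Oh(1)$.

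The routine parts are direct specializations of the proofs of Theorems~\ref{th:main} and~\ref{th:Emain}; the bound $\CC(A)^{-1}=|\cos\rho(A)|\le 1$ from Theorem~\ref{thm:GCC-dist} guarantees $\ln\CC(A)\ge 0$, so the tail integral makes sense. The one genuinely new step, and the main obstacle, is the book-keeping in the middle paragraph: one must see that the factor $n$ coming from ``which row is pushed to the boundary'' is compensated by the Wendel probability $p_{n-1,m}$ that the remaining rows are feasible, and that the resulting $n$-dependent quantities $n\,p_{n-1,m}$ and $\sum_k k\,p_{k-1,m}$ stay polynomial in $m$ — of orders $m$ and $m^2$ respectively — thanks to the geometric decay of $p_{k,m}$ for $k$ of order $m$.
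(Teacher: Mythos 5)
Your proposal is correct and follows essentially the same route as the paper's proofsketch: rerun Sections~\ref{se:feasible}--\ref{se:infeasible} with $c=1$, retain the Wendel probability $p(k,m)$ instead of bounding it by~$1$ in~\eqref{eq:anschluss}, and use its geometric decay (the content of Lemma~\ref{le:pkm}) to show $\sum_k k\,p(k,m)=\Oh(m^2)$ so the $n$-dependence disappears. The only cosmetic differences are constants (the paper uses $\frac{13m}{2\s}$ from the half-sphere $B(\aa_n,\a)$, you use $\frac{13m}{4}$ from the full projective ball) and the final step, where the paper invokes \cite[Prop.~2.4]{BCL:06a} while you carry out the tail integral directly.
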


\begin{proofsketch}
A result due to Wendel~\cite{wend:62} states that for $k>m$
\begin{equation}\label{eq:wendel}
 p(k,m) := \frac{\vol\Fe_{k,m}}{(\vol S^m)^k} = \frac1{2^{k-1}}\sum_{i=0}^m{k-1\choose i} .
\end{equation}
We refer to the proof in~\S\ref{se:feasible} for the uniform distribution on $S^m$
instead of $\mu_{\bar{A}}$ (think of $\s=1$). We write $k$ instead of $n$.
We do not need to use Lemma~\ref{lem:CHL:09}. Moreover we do not bound
$p(k,m)=\Prob\{A\in\Fe_{k,m}\}$ by~$1$ as in Equation~\eqref{eq:anschluss}.
Taking account of this, the proof in~\S\ref{se:feasible} shows that
$$
 \Prob\big\{A\in\Fe_{k,m} \mbox{ and } \CC(A)\ge x \big\} \le
  k\cdot \frac{13m(m+1)}{2}\,p(k,m)\ \frac1{x},
$$
provided $x\ge x_U:=2m(m+1)$.

We proceed now as in \S\ref{se:infeasible}, using the same notation.
For a fixed strictly feasible $A=(a_1,\ldots,a_{k})$ we have by
(\ref{eq:star})
$$
 \Prob\Big\{ b\in K_A \mbox{ and } \frac1{\sin d(b,\partial K_A)} \ge x\ \Big|\  A\Big\}
 \le \frac{13m}{2} \frac1x
$$
provided $x\ge x_V:=2m$.
Recall the definition of the event $\EE_k$
from (\ref{eq:event}).
Similarly as for (\ref{eq:interface}) we
conclude with the help of Lemma~\ref{le:multrva} that
$$
 \Prob\EE_k \le  \cnew m^3\, k\, p(k,m)\ \frac1t\, \ln t \quad \mbox{ for $t\ge e$,}
$$
where $\cnew$ stands for a universal constant.
Using Lemma~\ref{le:pkm} stated below we get
$$
\sum_{k=m+1}^{n-1} k\, p(k,m)  \le \sum_{k=m+1}^{4m} k\, p(k,m) +
 \sum_{k=4m+1}^{\infty} k\, p(k,m) =  \Oh(m^2) .
$$
Hence, by Equation~(\ref{eq:EEstep}),
\begin{equation*}
 \Prob\{ A\in \In_{n,m}\mbox{ and } \CC(A) \ge t\} \le
 \sum_{k=m+1}^{n-1}\Prob\EE_k \le
 \cnew' m^5\,  \frac1t\, \ln t
 \end{equation*}
for some constant $\cnew'$. It is obvious that $\Prob\{ A\in
\Fe_{n,m}\mbox{ and } \CC(A) \ge t\}$ can also be bounded this way.
Finally, the claimed bound on the expectation of $\ln\CC(A)$
follows immediately with the help of \cite[Prop.~2.4]{BCL:06a}.
\end{proofsketch}

\begin{lemma}\label{le:pkm}
We have $\sum_{k=4m+1}^\infty k\,p(k,m) = o(1)$ for $m\to\infty$.
\end{lemma}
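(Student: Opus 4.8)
The plan is to read $p(k,m)=2^{-(k-1)}\sum_{i=0}^m\binom{k-1}{i}$ (as in \eqref{eq:wendel}) as the lower tail $\Prob\{S_{k-1}\le m\}$ of a $\mathrm{Binomial}(k-1,\tfrac12)$ variable~$S_{k-1}$. For $k\ge 4m+1$ this is a tail far below the mean $(k-1)/2$ and is geometrically small in~$k$, so the series $\sum_{k\ge 4m+1}k\,p(k,m)$ will collapse to a constant times its first term, which can then be bounded by an exponentially small quantity. As a first step I would record an elementary estimate on the partial binomial sum: for $k\ge 4m+1$ we have $k-1-i\ge 3m$ for all $i\le m$, so $\binom{k-1}{i}/\binom{k-1}{i+1}=(i+1)/(k-1-i)\le m/(3m)=\tfrac13$ for $i\le m-1$; summing the resulting geometric tail gives
\[
 p(k,m)\ \le\ \tfrac32\,2^{-(k-1)}\binom{k-1}{m}\qquad (k\ge 4m+1).
\]

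Next I would set up the one-term recursion coming from Pascal's rule (equivalently, from conditioning $S_k$ on its last summand),
\[
 p(k+1,m)\ =\ p(k,m)\ -\ \tfrac12\,2^{-(k-1)}\binom{k-1}{m},
\]
and combine it with the previous display, which says $2^{-(k-1)}\binom{k-1}{m}\ge\tfrac23\,p(k,m)$, to conclude $p(k+1,m)\le\tfrac23\,p(k,m)$ for all $k\ge 4m+1$. Hence $q(k):=k\,p(k,m)$ satisfies $q(k+1)\le\tfrac23\cdot\tfrac{k+1}{k}\,q(k)\le\tfrac45\,q(k)$ for every $k\ge 4m+1$ and every $m\ge 1$ (using $k\ge 5$, so $(k+1)/k\le\tfrac65$), whence
\[
 \sum_{k=4m+1}^{\infty} k\,p(k,m)\ =\ \sum_{k\ge 4m+1} q(k)\ \le\ 5\,q(4m+1)\ =\ 5\,(4m+1)\,p(4m+1,m).
\]
To finish, I would bound the single remaining term: by the first display with $k=4m+1$ and the elementary inequality $\binom{4m}{m}\le (256/27)^m$ (a special case of $\binom{n}{\ell}\le (n/\ell)^{\ell}(n/(n-\ell))^{n-\ell}$), one gets $p(4m+1,m)\le\tfrac32\,2^{-4m}\binom{4m}{m}\le\tfrac32\,(16/27)^m$, so that
\[
 \sum_{k=4m+1}^{\infty} k\,p(k,m)\ \le\ \tfrac{15}{2}\,(4m+1)\,(16/27)^m,
\]
which tends to $0$ as $m\to\infty$, proving the lemma.

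I do not expect a genuine obstacle. The only point asking for a little care is keeping the contraction factor in the recursion uniform in~$m$: the quantity $\tfrac23\cdot\tfrac{k+1}{k}$ must stay strictly below~$1$ for all admissible pairs $(k,m)$, which the crude bound $(k+1)/k\le\tfrac65$ (valid since $k\ge 4m+1\ge 5$) secures. A more pedestrian alternative that avoids the recursion altogether is to apply a Chernoff bound, $p(k,m)\le\Prob\{S_{k-1}\le\tfrac{k-1}{4}\}\le e^{-(k-1)D(\tfrac14\,\|\,\tfrac12)}$ with $D$ the binary relative entropy, and then sum $\sum_{k\ge 4m+1}k\,e^{-c(k-1)}=O(m\,e^{-4cm})$; this works equally well but with slightly messier constants.
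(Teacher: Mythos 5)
Your proof is correct, and it takes a genuinely different route from the paper. The paper bounds $\sum_{i=0}^m\binom{k-1}{i}\le(m+1)\binom{k-1}{m}$, then $\binom{k-1}{m}\le k^m/m!$, and splits the sum at $k\approx \cnew m\log m$: in the far tail it uses $k^{m+1}2^{-k}\le 2^{-k/2}$ (valid only for ``sufficiently large $m$''), and in the middle range it uses monotonicity of $x\mapsto x^{m+1}2^{-x}$ to bound each of the $O(m\log m)$ terms by the first one. Your argument avoids the case split entirely: the sharper geometric bound $\sum_{i=0}^m\binom{k-1}{i}\le\frac32\binom{k-1}{m}$ (ratio test, using $k\ge 4m+1$), combined with the exact Pascal recursion $p(k+1,m)=p(k,m)-\frac12 2^{-(k-1)}\binom{k-1}{m}$, yields the uniform contraction $p(k+1,m)\le\frac23 p(k,m)$ and hence $q(k+1)\le\frac45 q(k)$ for all $k\ge 4m+1$, $m\ge 1$. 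Summing the resulting geometric series collapses everything onto the single term $k=4m+1$, which you bound explicitly by Stirling's entropy estimate $\binom{4m}{m}\le(256/27)^m$, giving the clean quantitative tail $\frac{15}{2}(4m+1)(16/27)^m$. This is cleaner, fully effective (no implicit constants or ``for $m$ large'' hedges), and if anything stronger than what the paper proves. I see no gaps: the recursion is correct (it is Pascal's rule applied termwise, or equivalently conditioning a Binomial$(k,1/2)$ sum on the last summand), the contraction factor $\frac23\cdot\frac{k+1}{k}\le\frac45$ holds since $k\ge 5$, and the final exponential estimate is standard.
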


\begin{proof}
Let $k>4m$. Wendel's result~(\ref{eq:wendel}) implies
$$
 k\, p(k,m) \le  k \frac{(m+1)}{2^{k-1}}\, {k-1\choose m}
 \le \frac{2(m+1)}{m!}\, \frac{k^{m+1}}{2^k} .
$$
We have $k^{m+1}2^{-k} \le 2^{-k/2}$ for $k\ge \cnew m\log m$, and
sufficiently large $m$, where $\cnew>0$ is a suitable universal
constant. Therefore, we get
$$
 \sum_{k\ge \cnew m\log m} k\, p(k,m)\ \le\
  \frac{2(m+1)}{m!}\sum_{k=0}^\infty \frac1{2^{k/2}}\  =\ o(1)\ (m\to\infty) .
$$
The function $x\mapsto x^{m+1}2^{-x}$ is monotonically decreasing
for $x\ge (m+1)/\ln2$. Hence, as $k> 4m$,  and using $m!\ge (m/e)^m$
we get
$$
 \frac1{m!}\,\frac{k^{m+1}}{2^k}\ \le\
 \frac1{m!}\,\frac{(4m)^{m+1}}{2^{4m}} \ \le\
 4m\, \big(\frac{e}{4}\big)^m .
$$
Since $e/4<1$, we conclude
$$
 \sum_{ k= 4m+1}^{\cnew m\log m} k\, p(k,m)\,
 \le \  8m(m+1) \big(\frac{e}{4}\big)^m \cnew m\log m\
 \ =\ o(1)\ (m\to\infty),
$$
which completes the proof.
\end{proof}

\section{Some spherical convex geometry}\label{se:volnbS}

The goal of this section is to provide the proof of Theorem~\ref{th:volbdconv},
following the lines of~\cite[Theorem~1.2]{BCL:08}.
We proceed in several steps.

\subsection{Integrals of curvature and Weyl's tube formula}
\label{se:weyl}

For the following material from differential geometry we refer to~\cite{spiv3:79}.
A good reference for the differential geometry of convex sets is~\cite{bofe:74}.

Let $V$ be a smooth hypersurface in $S^m$ with unit normal vector field
$\nu\colon V\to S^m$. The {\em principal curvatures} of $V$ at~$x\in V$
are defined as the eigenvalues $\kappa_1(x),\ldots,\kappa_{m-1}(x)$ of
the Weingarten map
$-D\nu(x)\colon T_xV\rightarrow T_xV$.
The  $i$th curvature $\scK_{V,i}(x)$ of $V$ at $x$ is the $i$th symmetric polynomial
in the principal curvatures:
$$
 \scK_{V,i}(x) :=\sum_{|I|=i} \prod_{j\in I}\kappa_j(x) \quad (0\le i<m).
$$
Interesting special cases are $\scK_{V,0}(x)=1$ and
\begin{equation}\label{eq:gk}
\scK_{V,m-1}(x)=\kappa_1(x)\cdots\kappa_{m-1}(x) =\det (-D\nu(x)),
\end{equation}
which is called the {\em Gaussian curvature} of $V$ at~$x$.
Let $U$ be an open subset of~$V$.
In~\cite{BCL:08} the integral $\mu_i(U)$ of $i$th curvature and the
integral $|\mu_i|(U)$ of $i$th absolute curvature were defined as
$$
 \mu_i(U)  := \int_U \scK_{V,i}\, dV, \quad
 |\mu_i|(U) := \int_U |\scK_{V,i}|\, dV  .
$$
Two special cases deserve special mention:
$\mu_0(U)=\vol\, U$ equals the $(m-1)$-dimensional volume of $U$.
Moreover,
$\mu_{m-1}(V)$ is the
{\em integral of the Gaussian curvature} of $V$.

By a {\em smooth convex body} $K$ in $S^m$
we will understand a convex body
such that its boundary $\partial K$ is a smooth hypersurface
in $S^m$ (of type $C^\infty$) and its Gaussian curvature
does not vanish in any point of $\partial K$.

Let $K$ be a smooth convex body in $S^m$ with boundary $V:=\partial K$.
We denote by $\nu\colon V\to S^m$ the unit normal vector field of
the hypersurface~$V$ that points inwards of $K$.
Here all the principal curvatures $\kappa_j(x)$ are nonnegative, cf.~\cite{bofe:74}.
Hence the $i$th curvatures are nonnegative as well and therefore
we have $\mu_i(U) =|\mu_i|(U)$ for any open subset $U$ of $V$.

For $0<\ph\le\pi/2$ we define the
{\em $\ph$-tube} $T^\perp(U,\ph)$ around~$U$ as
\begin{align*}
 \Tusn(U,\ph) := \{x\in S^m \mid\;& \mbox{$\exists y\in U$ such that $d(x,y)<\ph$ and} \\
  & \mbox{$[x,y]$ intersects $U$ orthogonally at $y$}\}.
\end{align*}
The {\em outer $\ph$-tube} $ \Tusn_o(U,\ph)$
and {\em inner $\ph$-tube}  $\Tusn_i(U,\ph)$ of $U$
are defined as
$$
 \Tusn_o(U,\ph) := \Tusn(U,\ph)\setminus K
 \mbox{ and }
 \Tusn_i(U,\ph) := \Tusn(U,\ph)\cap K .
$$

In an important paper, Weyl~\cite{weyl:39} derived a formula
for the volume of tubes around compact submanifolds of
euclidean spaces or spheres.
His result can be seen as extension of Steiner's formula
on the volume of ``parallel convex sets'' in euclidean space,
see also Allendoerfer~\cite{alle:48}.
Weyl's formula only holds for a sufficiently small radius.
In \cite[Prop.~3.1]{BCL:08}, it was observed that
when replacing integrals of curvature by absolute integrals
of curvature, one obtains an upper bound on
the volume of tubes holding for any radius.
As the above two notions of curvature coincide for boundaries
of convex sets, we get the following result.
(An inspection of the proof of \cite[Prop.~3.1]{BCL:08}
reveals that separate bounds on the inner and outer tube hold.)

\begin{proposition}\label{pro:tube-vol-mc}
Let $K$ be a smooth convex body in $S^m$
and $U$ be an open subset of $\partial K$.
Then we have for all $0<\ph \le \pi/2$
\begin{equation*}
 \max\{\vol \Tusn_o(U,\ph), \vol \Tusn_i(U,\ph)\}
 \ \le\  \sum_{i=0}^{m-1} J_{m,i+1}(\ph)\, \mu_i(U) ,
\end{equation*}
where $J_{m,i+1}$ denotes the function defined in~\eqref{eq:def-J-fcts}.
Moreover, this upper bound is sharp for sufficiently small $\ph$,
cf.~\cite{weyl:39}.
\end{proposition}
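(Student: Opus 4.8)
The plan is to derive Proposition~\ref{pro:tube-vol-mc} from \cite[Prop.~3.1]{BCL:08} by feeding in the nonnegativity of the principal curvatures of $\partial K$ recorded above. Write $V:=\partial K$ with its inward unit normal field $\nu$, and consider the two normal geodesic maps $\Phi_\pm\colon V\times(0,\varphi)\to S^m$, $\Phi_\pm(y,t):=\cos t\,y\pm\sin t\,\nu(y)$. Their images, together with $U$, cover $\Tusn(U,\varphi)$, and a standard computation gives
$$
 |\det D\Phi_\pm(y,t)|\ =\ \prod_{j=1}^{m-1}\bigl|\cos t\mp\kappa_j(y)\sin t\bigr| ,
$$
where $\kappa_1(y),\dots,\kappa_{m-1}(y)$ are the principal curvatures of $V$ at $y$ with respect to $\nu$. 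By the area formula, the image of each $\Phi_\pm$ has volume at most $\int_0^\varphi\!\int_U |\det D\Phi_\pm(y,t)|\,dV(y)\,dt$.

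First I would observe that $\Phi_+$ (moving inward) covers the inner tube and $\Phi_-$ (moving outward) covers the outer tube. A supporting-hyperplane argument at a boundary point $y$ --- using that $\partial K$ is smooth, so the supporting halfspace $K\subseteq\{z:\langle z,\nu(y)\rangle\ge 0\}$ is well defined --- gives $\langle\cos t\,y-\sin t\,\nu(y),\,\nu(y)\rangle=-\sin t<0$ for $t\in(0,\pi)$, so the outward geodesic issuing from $y$ never re-enters $K$. Hence every point of $\Tusn_i(U,\varphi)=\Tusn(U,\varphi)\cap K$ is necessarily attained by $\Phi_+$, and dually $\Tusn_o(U,\varphi)$ is covered by $\Phi_-$; this is precisely the separation into inner and outer tubes obtained by inspecting the proof of \cite[Prop.~3.1]{BCL:08}.

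Convexity then enters twice. Since $\nu$ is the inward normal of a convex body, all $\kappa_j(y)\ge 0$, so for $t\in(0,\pi/2]$ the elementary bound $|\cos t\mp\kappa_j(y)\sin t|\le\cos t+\kappa_j(y)\sin t$ holds, and therefore in both cases
$$
 |\det D\Phi_\pm(y,t)|\ \le\ \prod_{j=1}^{m-1}\bigl(\cos t+\kappa_j(y)\sin t\bigr)\ =\ \sum_{i=0}^{m-1}(\sin t)^i(\cos t)^{m-1-i}\,\scK_{V,i}(y) ,
$$
a sum with nonnegative coefficients because $\scK_{V,i}$ is the $i$th elementary symmetric polynomial in the nonnegative $\kappa_j(y)$; in particular $\scK_{V,i}=|\scK_{V,i}|$ and $\mu_i(U)=|\mu_i|(U)$, as already noted. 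Integrating over $U$ and over $t\in(0,\varphi)$, and using $\int_0^\varphi(\sin t)^i(\cos t)^{m-1-i}\,dt=J_{m,i+1}(\varphi)$ from \eqref{eq:def-J-fcts}, yields the asserted bound for both $\vol\Tusn_o(U,\varphi)$ and $\vol\Tusn_i(U,\varphi)$. For sharpness: for small $\varphi$ the outer map $\Phi_-$ is an embedding with everywhere-positive Jacobian $\prod_j(\cos t+\kappa_j\sin t)$, so there $\vol\Tusn_o(U,\varphi)$ equals $\sum_{i=0}^{m-1}J_{m,i+1}(\varphi)\,\mu_i(U)$ --- this is Weyl's tube formula \cite{weyl:39} --- and hence the bound on the maximum cannot be improved.

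I expect the genuine work to lie entirely in what is imported from \cite[Prop.~3.1]{BCL:08}: justifying ``volume of the image $\le$ integral of the absolute Jacobian'' via the area formula with multiplicities (the maps $\Phi_\pm$ are badly non-injective for $\varphi$ near $\pi/2$ and degenerate along the locus where some $\kappa_j\tan t=1$), and making the inclusions $\Tusn_i\subseteq\mathrm{im}\,\Phi_+$ and $\Tusn_o\subseteq\mathrm{im}\,\Phi_-$ fully rigorous. The only new ingredients over \cite{BCL:08} are the two appeals to convexity --- the nonnegativity of the $\kappa_j(y)$, which simultaneously delivers the Jacobian inequality above and the identity $\mu_i=|\mu_i|$.
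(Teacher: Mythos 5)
Your proposal follows the same route as the paper, which itself gives essentially no proof beyond pointing to \cite[Prop.~3.1]{BCL:08} and observing that $\mu_i=|\mu_i|$ on boundaries of convex bodies; you are in effect reconstructing the argument that that citation encapsulates, and your Jacobian computation, the elementary-symmetric expansion of $\prod_j(\cos t+\kappa_j\sin t)$, the integration giving $J_{m,i+1}$, and the sharpness remark via Weyl are all correct.

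One step is phrased loosely, though you do flag it yourself as needing rigor: the word ``dually'' suggests the outer inclusion $\Tusn_o(U,\ph)\subseteq\mathrm{im}\,\Phi_-$ follows by the mirror-image of the supporting-hyperplane argument, i.e.\ from $\mathrm{im}\,\Phi_+\subseteq K$. That containment is false: for a small spherical cap $K=B(p,\rho)$ with $\rho<\pi/4$ the inward normal geodesic from a boundary point leaves $K$ already at parameter $2\rho<\pi/2$, so $\mathrm{im}\,\Phi_+|_{t<\pi/2}\not\subseteq K$, and more generally a point $x\notin K$ can lie on an inward geodesic from $U$ without that witness being helpful. The clean way to get the outer inclusion is via nearest points, not duality: if $x\notin K$ and $y^\ast\in\partial K$ is a nearest point of $\partial K$ to $x$, then $x$ lies on the \emph{outward} normal geodesic from $y^\ast$ --- for if $x=\Phi_+(y^\ast,t)$ then $z:=\Phi_+(y^\ast,s)\in\mathrm{int}\,K$ for small $s>0$ has $d(x,z)=t-s<t$, contradicting minimality of $y^\ast$. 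With that (and the fact that, in the application in the proof of Theorem~\ref{th:volbdconv}, the nearest point is shown to lie in the relevant $U$), the outer inclusion holds and the rest of your argument goes through unchanged. This is precisely the kind of point you correctly anticipate as residing in the proof of \cite[Prop.~3.1]{BCL:08}.
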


\subsection{Some integral geometry}

We will need a special case of the principal kinematic
formula of integral geometry for spheres.
We denote by $G$ the orthogonal group $O(m+1)$,
that operates on $S^m$ in the natural way,
and denote by $dG$ its volume element normalized
such that the volume of $G$ equals one.
The following result is Theorem~2.7. in~\cite{BCL:08}.
(For related information see \cite{howa:93} and \cite{glas:95}.)

\begin{theorem}\label{th:kin-form-S}
Let $U$ be an open subset of a compact oriented smooth hypersurface~$M$
of $S^m$ and $0\le i <m-1$. Then we have
\begin{equation*}
\mu_i(U) = \kc(m,i) \int_{g\in G} \mu_i(gU\cap S^{i+1})\, dG(g),
\end{equation*}
where
$\kc(m,i) = (m-i-1){m-1\choose i}\,\frac{\Oh_{m-1}\Oh_{m}}
  {\Oh_{i}\Oh_{i+1}\Oh_{m-i-2}}$.
\end{theorem}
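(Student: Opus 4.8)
The plan is to prove this linear kinematic formula --- which the paper simply quotes from \cite[Thm.~2.7]{BCL:08} --- by the \emph{transfer principle}: reduce the global average over $G=O(m+1)$ to a pointwise integral-geometric identity on $T_xM$. Since the $G$-orbit of the reference subsphere $S^{i+1}$ is the Grassmannian $\mathcal{G}$ of all great $(i+1)$-subspheres of $S^m$, and the push-forward of the normalized Haar measure is the $G$-invariant probability measure $\sigma$ on $\mathcal{G}$, one has $\int_G\mu_i(gU\cap S^{i+1})\,dG=\int_{\mathcal G}\mu_i(U\cap S)\,d\sigma(S)$. Now apply the coarea formula to the two projections of the incidence manifold $\{(x,S):x\in U,\ S\in\mathcal G,\ x\in S,\ S\text{ transverse to }M\text{ at }x\}$. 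Writing $\mu_i(U\cap S)=\int_{U\cap S}\scK_{U\cap S,i}\,d\vol$ and interchanging the order of integration yields $\int_U\big(\int_{S\ni x,\text{ transv.}}\scK_{M\cap S,i}(x)\,w(\theta(S))\,d\lambda_x(S)\big)\,d\vol_M(x)$, where $\lambda_x$ is the induced measure on great subspheres through $x$ and the coarea overlap density $w$ depends only on the angle $\theta(S)\in[0,\tfrac{\pi}{2})$ between $\nu(x)$ and $T_xS$. It thus suffices to show that the inner (fiber) integral equals $\kc(m,i)^{-1}\scK_{M,i}(x)$.

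\emph{Curvature of a slice.} Fix $x\in M$ and let $A:=-D\nu(x)$ be the Weingarten map on $T_xM\cong\R^{m-1}$, so $\scK_{M,i}(x)=\sigma_i(A)$, the $i$-th elementary symmetric function of its eigenvalues. Let $S=\{z:\langle z,a_j\rangle=0,\ j=1,\dots,m-i-1\}$ be a great subsphere through $x$ (with $\{a_j\}$ orthonormal), transverse to $M$ at $x$; put $W:=T_x(M\cap S)=T_xM\cap T_xS$, let $v\in T_xS$ be the unit normal of $M\cap S$ in $S$, and set $\cos\theta:=\langle\nu(x),v\rangle$. Because $\mathrm{Hess}^{S^m}\langle\cdot,a_j\rangle=-\langle\cdot,a_j\rangle\,g$ vanishes identically at $x\in M\cap S$, the Gauss equation yields $\mathrm{Hess}^M\langle\cdot,a_j\rangle|_x=\langle a_j,\nu\rangle\,\langle A\cdot,\cdot\rangle$; hence all the extrinsic curvature of $M\cap S$ in $S^m$ at $x$ is carried by $A|_W$, namely $\mathrm{II}_{M\cap S\subset S^m}(X,Y)=\langle AX,Y\rangle(R+\nu)$ for $X,Y\in W$ with $R+\nu$ a fixed normal vector. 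A short Sherman--Morrison computation with the Gram matrix $I-\beta\beta^T$, $\beta_j:=\langle a_j,\nu\rangle$, $\sum_j\beta_j^2=\sin^2\theta$, gives $\langle R+\nu,v\rangle=\sec\theta$. Therefore the shape operator of $M\cap S$ in $S$ at $x$ equals $\sec\theta\cdot(A|_W)$ and $\scK_{M\cap S,i}(x)=\sec^i\theta\,\det(A|_W)$.

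\emph{Averaging and constants.} Parametrize the subspheres through $x$ by $(W,v)$ with $W$ an $i$-plane in $T_xM$ and $v=\cos\theta\,\nu+\sin\theta\,u'$, $u'$ a unit vector in $W^\perp\cap T_xM\cong\R^{m-i-1}$; by $O(m-1)$-equivariance the induced density on $W$ is uniform and decouples from $(\theta,u')$, so the fiber integral factors as $\big(c_1\!\int_0^{\pi/2}\!\sec^i\theta\,w(\theta)\,\varrho(\theta)\,d\theta\big)\cdot\Oh_{m-i-2}\cdot\int_{\mathrm{Gr}(i,m-1)}\!\det(A|_W)\,d\mu(W)$, where $\varrho$ is the $\theta$-density of the invariant measure on $\mathrm{Gr}(i+1,m)$. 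Writing $A=Q\Lambda Q^T$ and taking an orthonormal frame $P$ of $W$ with $R:=P^TQ$, the Cauchy--Binet formula gives $\det(A|_W)=\det(R\Lambda R^T)=\sum_{|I|=i}(\det R_I)^2\prod_{j\in I}\lambda_j$, and since $\sum_{|I|=i}(\det R_I)^2=\det(RR^T)=\det(P^TP)=1$ and $\bE_W[(\det R_I)^2]$ is, by symmetry, independent of $I$, one gets $\bE_W[\det(A|_W)]=\binom{m-1}{i}^{-1}\sigma_i(A)$. Evaluating the Beta-type $\theta$-integral and collecting the sphere volumes should then reproduce $\kc(m,i)^{-1}=\frac{\Oh_i\Oh_{i+1}\Oh_{m-i-2}}{(m-i-1)\binom{m-1}{i}\Oh_{m-1}\Oh_m}$; integrating $\sigma_i(A)=\scK_{M,i}(x)$ over $x\in U$ completes the proof.

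\emph{The main obstacle.} I expect the conceptual content --- the transfer principle and the slice computation --- to be routine; the real work will be the normalization bookkeeping in the last step: determining the coarea overlap density $w(\theta)$ and the $\theta$-density $\varrho(\theta)$ in the $(W,\theta,u')$-coordinates, evaluating $\int_0^{\pi/2}\sec^i\theta\,w(\theta)\,\varrho(\theta)\,d\theta$ exactly, and checking that this product of densities vanishes fast enough as $\theta\to\tfrac{\pi}{2}$ (where $\sec^i\theta$ blows up) for the integral to converge --- this convergence is precisely where the hypothesis $i<m-1$ should enter.
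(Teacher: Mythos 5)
First, a point of reference: the paper does not prove Theorem~\ref{th:kin-form-S} at all --- it is quoted from Theorem~2.7 of \cite{BCL:08}, which in turn derives it from Howard's general kinematic formula \cite{howa:93}. So you are supplying an argument the authors deliberately outsource. Your strategy is the standard one and is sound in outline: push the Haar integral forward to the Grassmannian of great $(i+1)$-subspheres, apply the coarea formula to the incidence variety, and reduce to a pointwise identity on $T_xM$. The two structural ingredients you work out are correct. The shape operator of the slice $M\cap S$ inside $S$ at $x$ is indeed $\sec\theta\cdot A|_W$: since $S$ is totally geodesic, $\nabla_XY\in T_xS$ for $X,Y$ tangent to $S$, so the component of $\nu$ normal to $T_xS$ contributes nothing and $\langle\nabla_XY,v\rangle=\sec\theta\,\langle AX,Y\rangle$. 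And the Cauchy--Binet averaging $\bE_W[\det(A|_W)]={m-1\choose i}^{-1}\scK_{M,i}(x)$ is correct and accounts for the factor ${m-1\choose i}$ in $\kc(m,i)$.

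However, as you yourself flag, the proof is incomplete, and the missing piece is not mere bookkeeping --- it is the content of the theorem. The statement is an exact identity with an explicit constant, and that exact constant is what is used downstream: Lemma~\ref{le:mc-estimate} only "considerably simplifies" because $\kc(m,i)$ combines exactly with $\Oh_i$, $\Oh_{i+1}\Oh_{m-i-2}J_{m,m-i-1}(\a)$ and the bound $J_{m,k}(\a)\le\s^k/k$. Your argument stops at "evaluating the Beta-type integral \dots should then reproduce $\kc(m,i)^{-1}$", but neither the coarea overlap density $w(\theta)$ nor the density $\varrho(\theta)$ of the invariant measure in your $(W,\theta,u')$ coordinates is ever written down, so the $\theta$-integral cannot be evaluated and the constant cannot be verified. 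The same omission leaves the convergence at $\theta\to\pi/2$ unproved: you correctly predict that $i<m-1$ must enter there, but you have not exhibited the power of $\cos\theta$ in $w(\theta)\varrho(\theta)$ that dominates $\sec^i\theta$. Computing these Jacobians is precisely the technical core of the transfer principle (Howard's angle $\sigma(V,W)$ between tangent spaces), so until that is done the proposal is a credible plan rather than a proof. A minor further point to record: transversality of $gU$ and $S^{i+1}$ fails only for a Haar-null set of $g$ (parametric transversality/Sard), which is needed so that the fiber integral over transverse slices computes the full $G$-average.
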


The special case $i=0$ yields an effective tool for estimating volumes,
usually referred to as Poincar\'e's formula:
\begin{equation}\label{eq:crofton}
 \vol_{m-1} U
 = \frac{\Oh_{m-1}}{2} \int_{g\in G} \#(U \cap gS^{1})\, dG(g),
\end{equation}
where $\#(U \cap gS^{1})$ denotes the number of elements
in $U \cap gS^{1}$ (note that this is a finite set
for almost all $g\in G$).
Here is an application of (\ref{eq:crofton}).
Clearly, the given bound is sharp (consider
spherical caps with radius almost $\pi/2$).

\begin{corollary}\label{cor:volconv}
Any smooth convex body $K$ in $S^m$ satisfies
$\vol\,\partial K\le \Oh_{m-1}$.
\end{corollary}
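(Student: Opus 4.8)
The plan is to apply Poincar\'e's formula~\eqref{eq:crofton} to the compact smooth hypersurface $M=U=\partial K$. This $M$ is orientable, being the boundary of the region $K$ and hence carrying a globally defined (inward) unit normal, so the formula applies and reads $\vol\,\partial K=\tfrac{\Oh_{m-1}}{2}\int_{g\in G}\#(\partial K\cap gS^1)\,dG(g)$. Since $\vol\,G=1$, the whole statement reduces to the purely geometric claim that every great circle $gS^1$ meets $\partial K$ in at most two points, for then the integrand is bounded by $2$.

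To prove this claim I would fix $g\in G$ and let $W$ be the two-dimensional linear subspace of $\R^{m+1}$ with $gS^1=S^m\cap W$. The set $\cone(K)\cap W$ is a convex cone in $W\cong\R^2$, and it is pointed because $\cone(K)$ is pointed ($K$ being properly convex). A pointed convex cone in the plane is either $\{0\}$, a ray, or a closed angular sector of opening strictly less than $\pi$ (a convex cone of opening $\pi$ is a half-plane, which is not pointed). Intersecting with $S^m$, this shows that $K\cap gS^1$ is empty, a single point, or a closed arc of $gS^1$ of length $<\pi$. The next step is to identify $\partial K\cap gS^1$ with the relative boundary of this arc inside $gS^1$: an endpoint of the arc must lie in $\partial K$, since otherwise an open $gS^1$-neighbourhood of it would lie in $K$, contradicting that it is an endpoint; conversely, a point $z$ in the relative interior of the arc is an interior point of a great-circle segment joining two points of $K$ at angular distance $<\pi$, hence lies in $\inte K$ by strict convexity, so $z\notin\partial K$. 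As the relative boundary of an arc in a circle has at most two points, the claim follows and the corollary is proved.

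The one place that genuinely uses the full hypothesis on $K$ — and the point I would be most careful about — is the appeal to strict convexity: the definition of a \emph{smooth} convex body requires the Gaussian curvature of $\partial K$ to be nowhere zero, and this is precisely what prevents $\partial K$ from containing a great-circle segment and thus guarantees the inclusion $(a,b)\subseteq\inte K$ above. Without it a great circle could run along a flat part of $\partial K$ and meet it in infinitely many points. With it, the two-point bound holds for \emph{every} $g\in G$, so no measure-theoretic argument (transversality, Sard) is needed. Sharpness, mentioned in the statement, is seen by taking $K$ to be a spherical cap of radius tending to $\pi/2$: then $\partial K$ is an $(m-1)$-sphere of radius tending to $1$, so $\vol\,\partial K\to\Oh_{m-1}$.
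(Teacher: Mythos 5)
Your proof is correct and follows the paper's approach: both apply Poincar\'e's formula~\eqref{eq:crofton} to $\partial K$ and bound $\#(\partial K\cap gS^1)$ by~$2$. The paper dispatches the two-point bound in one line (the intersection is almost surely finite, hence at most two by convexity), whereas you prove it pointwise for every $g$ via strict convexity from nonvanishing Gaussian curvature; this is a minor refinement of the same argument rather than a different route.
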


\begin{proof}
Almost surely, the intersection $\partial K \cap gS^1$ is finite.
Then it consists of at most two points by convexity.
\end{proof}

\subsection{Integrals of curvature for boundaries of convex sets}

In this section we assume that $K$ is a smooth convex body in $S^m$
and $\nu$ is the unit normal vector field on
$\partial K$ pointing inwards of $K$.
This means that for $x\in V$,
the unit vector $\nu(x)$ is uniquely characterized by the conditions
$\<v,x\>=0$ and $\<v,y\>\ge 0$ for all $y\in K$.

\begin{lemma}\label{le:imnu}
We have $-\nu(\partial K)= \partial\breve{K}$.
\end{lemma}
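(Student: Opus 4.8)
\textbf{Proof plan for Lemma~\ref{le:imnu}.}
The plan is to show the two inclusions $-\nu(\partial K)\subseteq\partial\breve K$ and $\partial\breve K\subseteq -\nu(\partial K)$ separately, using the characterization of $\nu(x)$ recalled just above the lemma: for $x\in\partial K$, $\nu(x)$ is the unique unit vector with $\<\nu(x),x\>=0$ and $\<\nu(x),y\>\ge 0$ for all $y\in K$. First I would fix $x\in\partial K$ and set $p:=-\nu(x)$. Then $\<p,y\>\le 0$ for all $y\in K$, so $p\in\breve K$ by Definition~\ref{def:dual}. To see that $p$ lies on the boundary of $\breve K$ rather than in its interior, note that $\<p,x\>=0$, so $p$ is orthogonal to a point of $K$; since $K$ has nonempty interior, any neighborhood of $p$ contains vectors $p'$ with $\<p',x'\><0$ for some $x'\in\inte K$, hence $p'\notin\breve K$. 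Thus $p\in\partial\breve K$, giving the first inclusion.

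For the reverse inclusion, I would take $p\in\partial\breve K$ and produce an $x\in\partial K$ with $p=-\nu(x)$. Since $\breve K$ is a properly convex body with $\breve{\breve K}=$ (closure of) $\scone(K)=\cone(K)\cap S^m$, the hyperplane $\{v\mid\<p,v\>=0\}$ is a supporting hyperplane of the cone $\cone(K)$: indeed $\<p,y\>\le 0$ for all $y\in K$ because $p\in\breve K$, and the inequality cannot be strict for all $y\in K$, for otherwise $-p$ would be an interior point of $\breve{\breve K}$ separated from $\partial\breve K$, contradicting $p\in\partial\breve K$ (here one uses that ``properly convex'' and ``nonempty interior'' are dual, as noted in~\S\ref{se:conv}, together with smoothness of $K$ to rule out degenerate contact). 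Hence there is $x\in\partial K$ with $\<p,x\>=0$ and $\<p,y\>\le 0$ for all $y\in K$, i.e.\ $-p$ satisfies exactly the two defining conditions of $\nu(x)$. By uniqueness of the inward normal (which uses that $\partial K$ is a smooth hypersurface with nonvanishing Gaussian curvature, so the supporting hyperplane at $x$ is unique), we get $-p=\nu(x)$, i.e.\ $p\in-\nu(\partial K)$.

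The main obstacle I anticipate is the contact-point argument in the second inclusion: one must argue that a boundary point $p$ of $\breve K$ really does touch $\cone(K)$ along a genuine supporting hyperplane at a point $x\in\partial K$ (and not, say, only ``at infinity'' or along a lower-dimensional face). This is where smoothness and strict convexity of $K$ enter: they guarantee that the support function of $\cone(K)$ is differentiable, so each boundary normal direction of $\breve K$ is realized by a unique contact point on $\partial K$, and conversely. I would phrase this cleanly by invoking that $K\mapsto\breve K$ is an involution on convex bodies (stated in~\S\ref{se:conv}) and that for a smooth convex body the Gauss map $x\mapsto\nu(x)$ and its analogue for $\breve K$ are mutually inverse homeomorphisms $\partial K\to-\partial\breve K$; a short duality computation then identifies $-\nu(\partial K)$ with $\partial\breve K$ on the nose.
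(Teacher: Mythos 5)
Your two-inclusion strategy is exactly the paper's, and the key steps you identify (a boundary point $p\in\partial\breve K$ forces the existence of $x\in K$ with $\<p,x\>=0$; this $x$ must lie in $\partial K$; uniqueness of the inward normal then yields $-p=\nu(x)$) match the paper's proof. However, two details in your write-up are slipped: in the first inclusion you argue that nearby $p'$ satisfy $\<p',x'\><0$, which would \emph{not} expel $p'$ from $\breve K$ --- you need $\<p',x'\>>0$ (obtained e.g.\ by perturbing $p$ in the direction $+x$); and in the second inclusion the clause about ``$-p$ would be an interior point of $\breve{\breve K}$'' is garbled --- the correct contradiction is that strict inequality $\<p,y\><0$ for all $y$ in the compact set $K$ would put $p$ itself in $\inte\breve K$. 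Finally, the appeal to differentiability of the support function and a Gauss-map duality argument is unnecessary machinery here: the paper gets by with only the pointwise characterization of $\nu(x)$ and a short compactness argument.
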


\begin{proof}
The characterization of $\nu(x)$ implies that
$-\nu(x)\in\partial\breve{K}$ for $x\in \partial K$.
For the other inclusion, let $v$ be a unit vector satisfying $-v\in\partial\breve{K}$.
Then $\<v,y\>\ge 0$ for all $y\in K$. Moreover, there exists $x\in K$ such that $\<v,x\>=0$.
This implies $x\in\partial K$. It follows that $v=\nu(x)$.
\end{proof}

The following bound is crucial for all what follows.
Again, considering spherical caps with radius almost $\pi/2$,
shows the optimality of the bound.

\begin{proposition}\label{le:GCbound}
The integral of Gaussian curvature of $\partial K$ is bounded as
$\mu_{m-1}(\partial K)\le\Oh_{m-1}$.
\end{proposition}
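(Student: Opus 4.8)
The plan is to deduce the bound $\mu_{m-1}(\partial K)\le\Oh_{m-1}$ from the fact that the inward Gauss map $-\nu\colon\partial K\to\partial\breve K$ is, up to sign, the unit normal map, together with the area-formula interpretation of the integral of Gaussian curvature. Concretely, by~\eqref{eq:gk} we have $\scK_{\partial K,m-1}(x)=\det(-D\nu(x))$, which is precisely the Jacobian of the Gauss map $x\mapsto \nu(x)$ (equivalently of $x\mapsto-\nu(x)$, since these differ by the antipodal map, an isometry of $S^m$). Hence by the area (change of variables) formula,
$$
 \mu_{m-1}(\partial K)=\int_{\partial K}\scK_{\partial K,m-1}\,dV
 =\int_{\partial K}|\det D\nu(x)|\,dV(x)
 =\int_{S^m}\#\{x\in\partial K\mid \nu(x)=v\}\,dS^m(v),
$$
using that $\scK_{\partial K,m-1}\ge 0$ for a convex body (as recorded in~\S\ref{se:weyl}).

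Next I would argue that the multiplicity function is at most $1$ almost everywhere. By Lemma~\ref{le:imnu} the image of $-\nu$ is $\partial\breve K$, a set of $m$-dimensional measure zero in $S^m$ (it is the boundary of a convex body), so the integrand vanishes for $v$ outside a null set; thus only $v\in-\partial\breve K$ contribute. For such $v$, strict convexity of the smooth body $K$ (Gaussian curvature nowhere zero) forces the supporting hyperplane with inner normal $v$ to touch $\partial K$ in a single point, so $\nu(x)=v$ has at most one solution $x\in\partial K$. Therefore the multiplicity is bounded by the indicator of a null set plus $1$ elsewhere, giving
$$
 \mu_{m-1}(\partial K)\le \int_{S^m} 1\,dS^m(v)=\Oh_m.
$$
This already gives a clean bound, but it is $\Oh_m$, not $\Oh_{m-1}$, so a sharper argument is needed.

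To get the stated $\Oh_{m-1}$ I would instead invoke Theorem~\ref{th:kin-form-S}, the spherical kinematic formula, in a limiting form as $i\to m-1$, or more cleanly combine the multiplicity bound above with the observation that $\nu(\partial K)\subseteq S^m$ actually lands in a \emph{hemisphere's worth} of directions in a controlled way: since $K$ is properly convex it lies in an open halfspace, so its set of inner normals $\{-\nu(x)\}=\partial\breve K$ is contained in the closed hemisphere centered at any interior point of $\breve K$, and by the duality $\breve K$ properly convex the Gauss image $\nu(\partial K)$ omits an open cap, but this only saves a constant factor, not the passage from $\Oh_m$ to $\Oh_{m-1}$. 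The genuinely correct route, which I expect the paper to take, is the integral-geometric one: apply Theorem~\ref{th:kin-form-S} with a suitable index and the base case that a great circle $gS^1$ meets the convex boundary $\partial K$ in at most two points (Corollary~\ref{cor:volconv}), iterating the slicing $\mu_{m-1}(\partial K)=\kc(m,m-2)\int_G\mu_{m-1}(g\partial K\cap S^{m-1})\,dG$ down to $S^1$, where $\mu_{m-1}$ degenerates to a point count bounded by $2$; the constants $\kc$ then telescope to exactly $\Oh_{m-1}/2\cdot 2=\Oh_{m-1}$.

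The main obstacle is therefore not the soft ``multiplicity $\le 1$'' estimate — that is routine from strict convexity — but pinning down the exact constant $\Oh_{m-1}$ rather than $\Oh_m$. This requires either the precise kinematic-formula bookkeeping (tracking the product of the normalizing constants $\kc(m,i)$ as one slices from $S^m$ down to $S^1$ and checking it collapses to $\Oh_{m-1}/2$ against a factor-$2$ point count) or an equivalent argument via Poincar\'e's formula~\eqref{eq:crofton} applied to the Gauss image. I would verify the constant on the model case of a spherical cap of radius approaching $\pi/2$, for which $\partial K$ is a round $(m-1)$-sphere of radius $\to 1$ and the Gaussian curvature integral tends to $\vol S^{m-1}=\Oh_{m-1}$, confirming both the value and the sharpness asserted in the statement.
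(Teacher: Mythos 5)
Your first paragraph misapplies the area formula: the Gauss map $\nu$ sends the $(m-1)$-dimensional manifold $\partial K$ into the $(m-1)$-dimensional hypersurface $\partial\breve K\subset S^m$, so the change-of-variables identity reads
\[
\int_{\partial K}|\det D\nu(x)|\,dV(x)=\int_{\partial\breve K}\#\{x:\,-\nu(x)=w\}\,d\mathcal{H}^{m-1}(w),
\]
with the \emph{$(m-1)$-dimensional} Hausdorff measure on the image, not against $dS^m$ as you wrote. Your formula $\int_{S^m}\#\{\cdots\}\,dS^m(v)$ is dimensionally incoherent, and indeed you notice the contradiction (the integrand is supported on a null set for the $m$-dimensional measure), but then incorrectly conclude a bound $\Oh_m$ rather than recognizing the error. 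Once the target measure is corrected, and once you observe as you essentially did that $-\nu$ is a bijection onto $\partial\breve K$ (injectivity follows from convexity together with the nonvanishing Gaussian curvature: if $\nu(x)=\nu(y)$ then $\nu$ is constant along the segment $[x,y]$, making $x$ a critical point), you get the \emph{equality} $\mu_{m-1}(\partial K)=\vol_{m-1}(\partial\breve K)$. The paper then finishes in one line using Corollary~\ref{cor:volconv}, which says $\vol\,\partial\breve K\le\Oh_{m-1}$ because a generic great circle meets a convex boundary in at most two points (Poincar\'e's formula~\eqref{eq:crofton}). You gesture at ``Poincar\'e's formula applied to the Gauss image'' in your last sentence, but you never make this step, and instead propose a kinematic-formula slicing that does not work as stated: Theorem~\ref{th:kin-form-S} is only valid for $0\le i<m-1$, so the identity $\mu_{m-1}(\partial K)=\kc(m,m-2)\int_G\mu_{m-1}(g\partial K\cap S^{m-1})\,dG$ is not an instance of it (the index on $\mu$ must match the index on $\kc$), and in the paper the kinematic formula is used for the \emph{lower} curvatures $\mu_i$ ($i<m-1$) with the present proposition as the base case, not the other way around. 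So the missing idea is precisely Corollary~\ref{cor:volconv}: the surjection onto $\partial\breve K$ is volume-preserving and $\partial\breve K$, being the boundary of a spherical convex body, has $(m-1)$-volume at most $\Oh_{m-1}$.
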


\begin{proof}
Put $V:=\partial K$. By Lemma~\ref{le:imnu},
$\nu\colon V\to \partial\breve{K}$ is surjective.
By (\ref{eq:gk}) we have $\scK_{V,m-1}(x)=\det (-D\nu(x))$ for $x\in V$.
Since we assume that the Gaussian curvature does not vanish,
the map $\nu$ has no singular values.

We claim that $\nu$ is injective. Otherwise, we had $\nu(x)=\nu(y)$ for distinct $x,y\in V$.
Since  $\lan \nu(x),x\ran=0$ and $\lan \nu(y),y\ran=0$ we had
$\lan \nu(x),z\ran=0$ for all $z\in [x,y]$.
Hence $\nu$ would be constant along this segment and therefore $x$ would be a critical point,
contradicting our asumption.

We conclude that $-\nu\colon V\to \nu(V)$ is a diffeomorphism
onto the smooth hypersurface $\partial\breve{K}$.
The transformation theorem yields
$$
 \mu_{m-1}(V) =  \int_V \scK_{V,m-1}\, dV = \int_V \det(-D\nu)\, dV = \vol\,\partial\breve{K}.
$$
Corollary~\ref{cor:volconv} implies now the assertion.
\end{proof}

\begin{lemma}\label{le:mc-estimate}
For $a\in S^m$, $0<\a\le\pi/2$, $\s=\sin\a$,
and $0\le i < m$ we have
\begin{equation*}
 \mu_i(\partial K\cap B(a,\a))\ \le\
  {m-1\choose i} \Oh_{m-1}\, \s^{m-i-1} .
\end{equation*}
\end{lemma}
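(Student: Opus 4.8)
The plan is to reduce the integral of the $i$th curvature over the cap to an average, over great subspheres $S^{i+1}$, of the integral of \emph{Gaussian} curvature of the corresponding slice, via the kinematic formula of Theorem~\ref{th:kin-form-S}; each slice contribution is then controlled by Proposition~\ref{le:GCbound} applied inside $S^{i+1}$, while the measure of the subspheres meeting the cap is read off from the tube-volume formula $\vol T(S^{m-k},\a)=\Oh_{m-k}\Oh_{k-1}J_{m,k}(\a)$.

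First I would dispose of the top case $i=m-1$: there $m-i-1=0$, so the claim is $\mu_{m-1}(\partial K\cap B(a,\a))\le\Oh_{m-1}$, which is immediate from Proposition~\ref{le:GCbound} since the Gaussian curvature of $\partial K$ is nonnegative, so restricting the integral to the subset $\partial K\cap B(a,\a)$ only shrinks it. For $0\le i\le m-2$ I would invoke Theorem~\ref{th:kin-form-S} with $M=\partial K$ and $U=\partial K\cap B(a,\a)$ (replacing the closed cap by the open one is harmless, as $\partial K\cap\partial B(a,\a)$ is $\mu_i$-null), obtaining
$$
 \mu_i(\partial K\cap B(a,\a)) = \kc(m,i)\int_{g\in G}\mu_i\big(g(\partial K\cap B(a,\a))\cap S^{i+1}\big)\,dG(g).
$$
For almost every $g$ the great subsphere $S^{i+1}$ is transversal to $\partial(gK)$ and meets $\inte(gK)$; then $gK\cap S^{i+1}$ is a convex body in $S^{i+1}$ with smooth boundary $\partial(gK)\cap S^{i+1}$, of which $g(\partial K\cap B(a,\a))\cap S^{i+1}=\partial(gK)\cap S^{i+1}\cap B(ga,\a)$ is an open piece. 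Inside $S^{i+1}$ the index-$i$ curvature is precisely the Gaussian curvature, which is nonnegative on this boundary, so Proposition~\ref{le:GCbound} applied in $S^{i+1}$ (where $\vol S^i=\Oh_i$) bounds the slice contribution by $\Oh_i$; moreover it vanishes unless $B(ga,\a)\cap S^{i+1}\ne\emptyset$, i.e.\ unless $d(ga,S^{i+1})\le\a$. Since $ga$ is uniform on $S^m$ for Haar-random $g$, the measure of the latter event equals $\vol T(S^{i+1},\a)/\Oh_m$; writing $S^{i+1}=S^{m-k}$ with $k=m-i-1$ and using $J_{m,k}(\a)\le\int_0^\a(\sin\rho)^{k-1}\cos\rho\,d\rho=\s^{m-i-1}/(m-i-1)$, this gives
$$
 \mu_i(\partial K\cap B(a,\a))\ \le\ \kc(m,i)\,\Oh_i\,\frac{\Oh_{i+1}\Oh_{m-i-2}}{\Oh_m}\,\frac{\s^{m-i-1}}{m-i-1}.
$$
Substituting $\kc(m,i)=(m-i-1)\binom{m-1}{i}\,\Oh_{m-1}\Oh_m/(\Oh_i\Oh_{i+1}\Oh_{m-i-2})$, all the $\Oh$-factors and the factor $m-i-1$ cancel and the right-hand side collapses exactly to $\binom{m-1}{i}\Oh_{m-1}\s^{m-i-1}$; spherical caps of radius close to $\pi/2$ show the bound is sharp.

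I expect the main obstacle to be the geometric bookkeeping around the slice $gK\cap S^{i+1}$ --- namely, making precise that for almost all $g$ it is a \emph{smooth convex body} in $S^{i+1}$ so that Proposition~\ref{le:GCbound} genuinely applies. Transversality of $S^{i+1}$ to $\partial(gK)$ and the condition that $S^{i+1}$ meets $\inte(gK)$ fail only on a measure-zero set of $g$ by a Sard-type argument; proper convexity of the slice is inherited from that of $gK$; and the required inequality $\int|\scK_{\cdot,i}|\le\Oh_i$ in fact holds for the boundary of \emph{any} convex body in $S^{i+1}$ with smooth boundary, even without positive Gaussian curvature, because the Gauss map is injective on the locus where the Gaussian curvature is positive --- two boundary points sharing a normal would span a great-circle segment lying in the boundary, forcing a vanishing principal curvature there --- so by the area formula and Lemma~\ref{le:imnu}/Corollary~\ref{cor:volconv} the curvature integral is at most $\vol\,\partial\breve{L}\le\Oh_i$. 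Everything else --- the $\Gamma$-function arithmetic behind $\kc(m,i)$ and the elementary estimate of $J_{m,k}(\a)$ --- is routine.
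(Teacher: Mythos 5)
Your proof is correct and follows essentially the same route as the paper's: dispose of $i=m-1$ via Proposition~\ref{le:GCbound}, apply the kinematic formula of Theorem~\ref{th:kin-form-S} for $i<m-1$, bound each slice contribution by $\Oh_i$ using Proposition~\ref{le:GCbound} inside $S^{i+1}$, estimate the measure of subspheres meeting the cap by $\vol T(S^{i+1},\a)/\Oh_m$, and close with the estimate $J_{m,k}(\a)\le\s^k/k$ and the cancellation of the constant $\kc(m,i)$. The only cosmetic difference is that the paper takes $U=\partial K\cap\inte(B(a,\a))$ from the start rather than remarking on the null boundary; the extra digression at the end about extending Proposition~\ref{le:GCbound} beyond nonvanishing Gaussian curvature is unnecessary for the present lemma, since transversal slices of a smooth convex body (in the paper's sense) are again smooth convex bodies for almost every $g$.
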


\begin{proof}
This is similar, but somewhat simpler than the proof of
\cite[Prop.~3.2]{BCL:08}.
The case $i=m-1$ is already established by Proposition~\ref{le:GCbound}.
Hence we assume $i<m-1$.
Let $g\in G=O(m+1)$ be such that $V:=\partial K$ intersects $gS^{i+1}$ transversally
with nonempty intersection. We apply Proposition~\ref{le:GCbound}
to the convex body $K\cap gS^{i+1}$ in the sphere $gS^{i+1}$,
which has the smooth boundary $V\cap gS^{i+1}$.
Hence $\mu_i(V\cap gS^{i+1}) \le \Oh_i$.
The kinematic formula of Theorem~\ref{th:kin-form-S} applied to the
open subset $U:=V\cap \inte(B(a,\a))$ of $V$ yields
\begin{equation*}
\begin{split}
  \mu_i(U) &= \kc(m,i) \int_{g\in G} \mu_i(gU\cap S^{i+1}) \, dG(g) \\
     &\le \kc(m,i)\, \Oh_i\, \Prob_{g\in G}\{gU\cap S^{i+1}\ne\emptyset\}.
\end{split}
\end{equation*}
Using $gU\subseteq B(ga,\a)$, this probability may be estimated
as follows
\begin{eqnarray*}
\lefteqn{\Prob_{g\in G}\{gU\cap S^{i+1}\ne\emptyset\}\ \le\
   \Prob_{g\in G} \{B(ga,\a)\cap S^{i+1}\ne\emptyset\} } \\
 &= &  \Prob_{a'\in S^m} \{B(a',\a)\cap S^{i+1}\ne\emptyset\}
   = \Oh_{m}^{-1}\vol T(S^{i+1},\a) .
\end{eqnarray*}
Lemma~2.1 in~\cite{BCL:08} implies
$\vol\, T(S^{i+1},\a) = \Oh_{i+1}\Oh_{m-i-2}\, J_{m,m-i-1}(\a)$.
Moroever, Lemma~2.2 in~\cite{BCL:08} says that
\begin{equation}\label{eq:JE}
 J_{m,k}(\a) \le \frac{\s^{k}}{k} \quad  \mbox{ for $1\le k<m$.}
\end{equation}
By combining these estimates and plugging in
the formula for $\kc(m,i)$ from Theorem~\ref{th:kin-form-S},
the resulting expression considerably simplifies and we get
$\mu_i(U)\ \le\ {m-1\choose i} \Oh_{m-1}\ \s^{m-i-1}$
as claimed.
\end{proof}

\subsection{Proof of Theorem~\ref{th:volbdconv}}

We can finally provide the proof of Theorem~\ref{th:volbdconv}.
We assume first that $K$ is a smooth convex body in $S^m$.
Let $a\in S^m$, $0<\a,\ph\le\pi/2$, put $\s=\sin\a$, $\e=\sin\ph$,
and let $U=\partial K\cap B(a,\a)$.
By combining Proposition~\ref{pro:tube-vol-mc}
with Lemma~\ref{le:mc-estimate} we get
\begin{equation*}
  \vol T_o^\perp(U,\ph) \le\
   \sum_{i=0}^{m-1} {m-1\choose i} \Oh_{m-1} \sigma^{m-i-1} J_{m,i+1}(\ph).
\end{equation*}
Using the estimate (\ref{eq:JE}) we obtain after a short calculation
(put $k=i+1$, use ${m-1\choose k-1}=\frac{k}{m}{m\choose k}$ and
consider separately the term for $k=m$)
\begin{equation}\label{eq:tubel}
 \vol\Tusn_o(\partial K\cap B(a,\a),\ph) \ \le\
 \frac{\Oh_{m-1}}{m} \sum_{k=1}^{m-1} {m\choose k}\, \e^{k}\,\sigma^{m-k}
 \ +\  \frac12 \Oh_{m}\, \e^m .
\end{equation}
The same upper bound holds for the volume of
$\Tusn_i(\partial K\cap B(a,\a),\ph)$.

We claim that
\begin{equation}\label{eq:gopfh}
T_o(\partial K,\ph)\cap B(\pm a,\a)\subseteq T_o^\perp(\partial K\cap B(\pm a,\b),\ph)
\end{equation}
where $\b=\arcsin\min\{1,\s+\e\}$.
Indeed, suppose $x\in T_o(\partial K,\ph)\cap B(\pm a,\a)$ and let $y\in\partial K$
be a closest point to $x$.
Then $d(x,y)\le\ph$ and $[x,y]$
intersects $\partial K$ orthogonally (as $\partial K$ is smooth without boundary).
The triangle inequality for projective distance (cf.~\S\ref{se:dist})
implies that
$\sin d(a,y) < \sin d(a,x) + \sin d(x,y)\le \s + \e$.
Hence $\sin d(a,y) \le \sin\b$ and therefore
$y\in B(\pm a,\b)$ which shows the claim.

By combining (\ref{eq:gopfh}) with (\ref{eq:tubel}) we get
$$
 \vol (T_o(\partial K,\ph)\cap B(\pm a,\a)) \le
  \frac{2\Oh_{m-1}}{m} \sum_{k=1}^{m-1} {m\choose k}\, \e^{k}\,(\sigma+\e)^{m-k}
     \ +\  \Oh_{m}\, \e^m .
$$
We have $\vol B(\pm a,\a)\ge 2\Oh_{m-1}\frac{\s^m}{m}$,
cf.~\cite[Lemmas~2.1-2.2]{BCL:08}.
Using this, we obtain
\begin{equation}\label{eq:almostdone}
 \frac{\vol (T_o(\partial K,\ph) \cap B(\pm a,\a))}{\vol B(\pm a,\a)} \le
   \sum_{k=1}^{m-1} {m\choose k}\, \Big(1 + \frac{\e}{\s}\Big)^{m-k}\, \Big(\frac{\e}{\s}\Big)^k
    + \frac{m\Oh_m}{2\Oh_{m-1}}\, \Big(\frac{\e}{\s}\Big)^m .
\end{equation}
This shows the assertion of Theorem~\ref{th:volbdconv} for the
outer neighborhood in the case where $K$ is a smooth convex body.
The bound for the inner neighborhood is shown similarly.

The general case where $K$ is any properly convex set in $S^m$
will follow by a pertubation argument.
We define the {\em Hausdorff distance} $d(K,K')$
of two convex sets $K$ and~$K'$ in $S^m$
as the infimum of the real numbers $\d\ge 0$ satisfying
$K\subseteq T(K',\d)$ and $K'\subseteq T(K,\d)$.
This defines a metric and allows to speak about
the convergence of convex sets.
(For compact
convex sets in euclidean space the Hausdorff distance
is a well known notion.)

\begin{lemma}\label{le:pert}
Any properly convex set $K$ in $S^m$ is the limit of a sequence
of smooth convex bodies.
\end{lemma}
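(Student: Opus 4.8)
The plan is to transport the problem, via the gnomonic (central) projection, to the classical Euclidean fact that the smooth strictly convex bodies are dense, in the Hausdorff metric, among all convex bodies.

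First I would normalise. Since the closure $\bar K$ is again properly convex and $d(K,\bar K)=0$, it suffices to approximate a \emph{closed} (hence compact) properly convex set $K$. Being properly convex, $K$ lies in an open hemisphere, so I may fix a unit vector $e$ with $\langle x,e\rangle>0$ for all $x\in K$, set $H^+:=\{x\in S^m:\langle x,e\rangle>0\}$, and let $E:=\{y\in\R^{m+1}:\langle y,e\rangle=1\}\cong\R^m$. The gnomonic chart $g\colon H^+\to E$, $g(x)=x/\langle x,e\rangle$, is a diffeomorphism with inverse $g^{-1}(y)=y/\|y\|$; it takes great-circle arcs to line segments and conversely, hence it takes spherically convex subsets of $H^+$ onto Euclidean convex subsets of $E$ and conversely. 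Thus $C:=g(K)$ is a compact convex subset of $E$, and replacing $C$ by $C+\delta\bar B_E$ and letting $\delta\to0$ I may assume $C$ is full-dimensional.

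By the classical approximation theorem for convex bodies (see, e.g.\ \cite{webs:94} and the references therein) there are convex bodies $C_j\subset E$ with $\partial C_j$ (the boundary taken in $E$) of class $C^\infty$ and everywhere positive Gaussian curvature such that $C_j\to C$ in the Hausdorff metric. Put $K_j:=g^{-1}(C_j)\subset H^+$. Each $K_j$ is compact and spherically convex, has nonempty interior (namely $g^{-1}$ of the interior of $C_j$), and contains no antipodal pair, hence is properly convex; so by the duality recalled in \S\ref{se:conv} the dual $\breve{K_j}$ also has nonempty interior, and $K_j$ is a convex body. Since $g^{-1}$ is a diffeomorphism onto the open set $H^+$, the boundary $\partial K_j=g^{-1}(\partial C_j)$ is a $C^\infty$ hypersurface of $S^m$. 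Finally, $K$ and all $C_j$ with $j$ large lie in the fixed compact set $\Omega:=C+\bar B_E\subset E$, on which $g^{-1}$ is Lipschitz with some constant $L$; hence $d(K_j,K)\le L\cdot d_E(C_j,C)\to0$, so $K_j\to K$ in the spherical Hausdorff metric.

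The one delicate step — which I expect to be the main obstacle — is verifying that $\partial K_j$ still has \emph{nonvanishing} Gaussian curvature. For a convex hypersurface, where all principal curvatures are $\ge0$ with respect to the inner normal, positivity of $\scK_{\partial K_j,m-1}$ at a point $p$ is equivalent to the following: every geodesic of $S^m$ through $p$ tangent to $\partial K_j$ has contact of order exactly two with $\partial K_j$ at $p$ (contact of order $\ge3$ would force a vanishing normal curvature, and conversely). Now $g^{-1}$ carries the geodesics of $E$ — the straight lines — to geodesics of $S^m$ — great circles —, carries $\partial C_j$ to $\partial K_j$, and, being a diffeomorphism, preserves the order of contact of a curve with a hypersurface at a point. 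Since $\partial C_j$ is convex with positive Gaussian curvature, each of its tangent lines has exactly second-order contact with it; transporting by $g^{-1}$, each tangent great circle of $\partial K_j$ has exactly second-order contact with $\partial K_j$, so $\scK_{\partial K_j,m-1}>0$ throughout $\partial K_j$. This finishes the argument that each $K_j$ is a smooth convex body, and the remaining points (that $g$ preserves convexity, that $\partial K_j$ is $C^\infty$, and that the Hausdorff convergence transfers) are routine.
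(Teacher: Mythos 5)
Your proof is correct and takes essentially the same route as the paper: both reduce to the Euclidean Minkowski approximation theorem via the gnomonic (perspective) projection onto an affine hyperplane, observing that this map carries convex sets to convex sets and preserves positivity of the Gaussian curvature on the boundary. The paper dismisses the curvature-preservation step as ``easily checked,'' whereas you supply a clean justification via contact order with geodesics; that extra detail is welcome but does not change the underlying argument.
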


\begin{proof}
The euclidean version of the claim is
a well known result due to Minkowski,
see~\cite[\S6]{bofe:74} (or~\cite{Schn:84})
for more information.

A properly convex set $K\subset S^m$ is contained in an open halfspace.
For fixed $p\in S^m$ consider now the open halfsphere
$S^m_+:=\{ x\in S^m\mid  \<x,p\> > 0\}$
with center $p$ and the affine space
$E:=\{x\in\R^{m+1} \mid \<x,p\> = 1\}$.
The ``perspective map''
$\pi\colon S^m_+\to E, x\mapsto \<p,x\>^{-1} x$
maps an intersection of a linear space with $S^m$ to an affine linear subspace of $E$
and vice versa. Moreover, $\pi$
maps convex sets to convex sets
and vice versa.
Moreover, one sees that
$\pi$ induces a homeomorphism between
the set of convex subsets of $S^m_+$ and
the set of compact convex subsets of $E$.
It is easily checked that if $\tilde{K}\subset E$ smooth compact convex
has nonvanishing Gaussian curvature on the boundary, then
this also holds for $\pi(\tilde{K})$.
The assertion follows from the
euclidean version of our claim.
\end{proof}

To finish the proof of Theorem~\ref{th:volbdconv}
let now $K\subset S^{m}$ be a properly convex set and $\d>0$.
By Lemma~\ref{le:pert} there exists a smooth convex body~$K'$
such that $K$ and $K'$ have
Hausdorff distance at most $\d$, which means that
$K\subseteq T(K',\d)$ and $K'\subseteq T(K,\d)$.
This implies $K' \setminus K\subseteq T(\partial K,\d)$ and
$$
 T_o(\partial K,\ph)\subseteq T_o(\partial K',\ph+\d) \cup (K' \setminus K) .
$$
By applying (\ref{eq:almostdone}) to $T_o(\partial K',\ph+\d)$,
letting $\d\to 0$, and noting that
$\vol T(\partial K,\d)$ goes to zero,
the desired assertion follows.
For the inner neighborhood one argues similarly.
\hfill$\Box$

{\small

\begin{thebibliography}{10}

\bibitem{alle:48}
C.B. Allendoerfer.
\newblock Steiner's formulae on a general {$S^{n+1}$}.
\newblock {\em Bull. Amer. Math. Soc.}, 54:128--135, 1948.

\bibitem{ball:93}
K.~Ball.
\newblock The reverse isoperimetric problem for {G}aussian measure.
\newblock {\em Discrete Comput. Geom.}, 10(4):411--420, 1993.

\bibitem{bofe:74}
T.~Bonnesen and W.~Fenchel.
\newblock {\em Theorie der konvexen {K}\"orper}.
\newblock Springer-Verlag, Berlin, 1974.
\newblock Berichtigter Reprint.

\bibitem{buer:09a}
P.~B\"urgisser.
\newblock Smoothed analysis of condition numbers.
\newblock In {\em Foundations of Computational Mathematics, Hong Kong 2008},
  pages 1--41. Cambridge University Press, 2009.

\bibitem{BCL:08a}
P.~B\"urgisser, F.~Cucker, and M.~Lotz.
\newblock Coverage processes on spheres and condition numbers for linear
  programming.
\newblock To appear in {\em Annals of Probability}.

\bibitem{BCL:06a}
P.~B\"urgisser, F.~Cucker, and M.~Lotz.
\newblock Smoothed analysis of complex conic condition numbers.
\newblock {\em J. Math. Pures et Appl.}, 86:293--309, 2006.

\bibitem{BCL:08}
P.~B{\"u}rgisser, F.~Cucker, and M.~Lotz.
\newblock The probability that a slightly perturbed numerical analysis problem
  is difficult.
\newblock {\em Math. Comp.}, 77(263):1559--1583, 2008.

\bibitem{ChC:01}
D.~Cheung and F.~Cucker.
\newblock A new condition number for linear programming.
\newblock {\em Math. Program.}, 91(1, Ser. A):163--174, 2001.

\bibitem{ChC01}
D.~Cheung and F.~Cucker.
\newblock Probabilistic analysis of condition numbers for linear programming.
\newblock {\em Journal of Optimization Theory and Applications}, 114:55--67,
  2002.

\bibitem{ChCH:05}
D.~Cheung, F.~Cucker, and R.~Hauser.
\newblock Tail decay and moment estimates of a condition number for random
  linear conic systems.
\newblock {\em SIAM J. Optim.}, 15(4):1237--1261 (electronic), 2005.

\bibitem{CHL:09}
F.~Cucker, R.~Hauser, and M.~Lotz.
\newblock Adversarial smoothed analysis.
\newblock To appear in {\em J. of Complexity}.

\bibitem{cupe:02}
F.~Cucker and J.~Pe{\~n}a.
\newblock A primal-dual algorithm for solving polyhedral conic systems with a
  finite-precision machine.
\newblock {\em SIAM J. Optim.}, 12(2):522--554 (electronic), 2001/02.

\bibitem{CW01}
F.~Cucker and M.~Wschebor.
\newblock On the expected condition number of linear programming problems.
\newblock {\em Numer. Math.}, 94:419--478, 2002.

\bibitem{Demmel88}
J.~Demmel.
\newblock The probability that a numerical analysis problem is difficult.
\newblock {\em Math. Comp.}, 50:449--480, 1988.

\bibitem{DST}
J.~Dunagan, D.A. Spielman, and S.-H. Teng.
\newblock Smoothed analysis of condition numbers and complexity implications
  for linear programming.
\newblock {\em Math.~Programm.~Series~A}, 2009. To appear.

\bibitem{glas:95}
S.~Glasauer.
\newblock {\em Integralgeometrie konvexer K\"orper im sph\"arischen Raum}.
\newblock PhD thesis, Universit\"at Freiburg im Br., 1995.

\bibitem{goff:80}
J.-L. Goffin.
\newblock The relaxation method for solving systems of linear inequalities.
\newblock {\em Math. Oper. Res.}, 5(3):388--414, 1980.

\bibitem{HM:06}
R.~Hauser and T.~M{\"u}ller.
\newblock Conditioning of random conic systems under a general family of input
  distributions.
\newblock {\em Found. Comput. Math.}, 9(3):335--358, 2009.

\bibitem{howa:93}
R.~Howard.
\newblock The kinematic formula in {R}iemannian homogeneous spaces.
\newblock {\em Mem. Amer. Math. Soc.}, 106(509):vi+69, 1993.

\bibitem{rene:94}
J.~Renegar.
\newblock Some perturbation theory for linear programming.
\newblock {\em Math. Programming}, 65(1, Ser. A):73--91, 1994.

\bibitem{rene:95b}
J.~Renegar.
\newblock Incorporating condition measures into the complexity theory of linear
  programming.
\newblock {\em SIAM J. Optim.}, 5(3):506--524, 1995.

\bibitem{rene:95a}
J.~Renegar.
\newblock Linear programming, complexity theory and elementary functional
  analysis.
\newblock {\em Math. Programming}, 70(3, Ser. A):279--351, 1995.

\bibitem{sast:06}
A.~Sankar, D.A. Spielman, and S.H. Teng.
\newblock Smoothed analysis of the condition numbers and growth factors of
  matrices.
\newblock {\em SIAM J. Matrix Anal. Appl.}, 28(2):446--476 (electronic), 2006.

\bibitem{Schn:84}
R.~Schneider.
\newblock Smooth approximation of convex bodies.
\newblock {\em Rend. Circ. Mat. Palermo (2)}, 33(3):436--440, 1984.

\bibitem{Smale97}
S.~Smale.
\newblock Complexity theory and numerical analysis.
\newblock In A.~Iserles, editor, {\em Acta Numerica}, pages 523--551. Cambridge
  University Press, 1997.

\bibitem{ST:02}
D.A. Spielman and S.-H. Teng.
\newblock Smoothed analysis of algorithms.
\newblock In {\em Proceedings of the International Congress of Mathematicians},
  volume~I, pages 597--606, 2002.

\bibitem{ST:03}
D.A. Spielman and S.-H. Teng.
\newblock Smoothed analysis of termination of linear programming algorithms.
\newblock {\em Math. Programm. Series B}, 97:375--404, 2003.

\bibitem{ST:04}
D.A. Spielman and S.-H. Teng.
\newblock Smoothed analysis: Why the simplex algorithm usually takes polynomial
  time.
\newblock {\em Journal of the ACM}, 51:385--463, 2004.

\bibitem{spiv3:79}
M.~Spivak.
\newblock {\em A comprehensive introduction to differential geometry. {V}ol.
  {III}}.
\newblock Publish or Perish Inc., Wilmington, Del., second edition, 1979.

\bibitem{webs:94}
R.~Webster.
\newblock {\em Convexity}.
\newblock Oxford Science Publications. The Clarendon Press Oxford University
  Press, New York, 1994.

\bibitem{wend:62}
J.~G. Wendel.
\newblock A problem in geometric probability.
\newblock {\em Math. Scand.}, 11:109--111, 1962.

\bibitem{weyl:39}
H.~Weyl.
\newblock On the {V}olume of {T}ubes.
\newblock {\em Amer. J. Math.}, 61(2):461--472, 1939.

\end{thebibliography}

}


\end{document}